\author{Léonard Cadilhac}
\newtheorem{definition}{Definition}[section]
\newtheorem{theorem}[definition]{Theorem}
\newtheorem{conjecture}[definition]{Conjecture}
\newtheorem{property}[definition]{Proposition}
\theoremstyle{definition}
\newtheorem{remark}[definition]{Remark}
\newtheorem{lemma}[definition]{Lemma}
\newtheorem{corollary}[definition]{Corollary}
\newcommand{\Zb}{\mathbb{Z}}
\newcommand{\Rb}{\mathbb{R}}
\newcommand{\Mb}{\mathbb{M}}
\newcommand{\Nb}{\mathbb{N}}
\newcommand{\Cb}{\mathbb{C}}
\newcommand{\Eb}{\mathbb{E}}
\newcommand{\Db}{\mathbb{D}}
\newcommand{\A}{\mathcal{A}}
\newcommand{\B}{\mathcal{B}}
\newcommand{\M}{\mathcal{M}}
\newcommand{\E}{\mathcal{E}}
\newcommand{\R}{\mathcal{R}}
\newcommand\Norm[1]{\left\| #1 \right\|}
\newcommand\md[1]{\left| #1 \right|}
\newcommand\Floor[1]{\left \lfloor {#1} \right \rfloor}
\newcommand\p[1]{\left( #1 \right)}
\newcommand\set[1]{\left\lbrace #1 \right\rbrace}
\newcommand{\normop}[1]{{\left\vert\kern-0.25ex\left\vert\kern-0.25ex\left\vert #1 
    \right\vert\kern-0.25ex\right\vert\kern-0.25ex\right\vert}}
\newcommand{\les}{\lesssim}
\newcommand{\e}{\varepsilon}
\newcommand{\Ind}{\mathbf{1}}
\newcommand\Sum[2]{\sum\limits_{#1}^{#2}}
\newcommand\Int[2]{\int_{#1}^{#2}}
\newcommand\addtag{\refstepcounter{equation}\tag{\theequation}}
\newcommand\restr[2]{{
  \left.\kern-\nulldelimiterspace 
  #1 
  \right|_{#2} 
  }}
\title{Majorization, interpolation and noncommutative Khintchine inequality}
\begin{document}

\maketitle

\begin{abstract}
Let $0<p<q\leq\infty$ and $\alpha \in (0,\infty]$. We give a characterization of quasi-Banach interpolation spaces for the couple \mbox{$(L_p(0,\alpha),L_q(0,\alpha))$} in terms of two monotonicity properties, extending known results which mainly dealt with Banach spaces. This enables us to recover recent results of Cwikel and Nilsson on sequence spaces and to solve a conjecture of Levitina, Sukochev and Zanin in the setting of function spaces. We apply the results obtained to characterize symmetric spaces in which the standard forms of the noncommutative Khintchine inequalities hold.
\end{abstract}

\section{Introduction}

This paper is motivated by two different problems, one in the classical theory of $L_p$-spaces and in particular symmetric spaces and the other in noncommutative harmonic analysis. The two have become closely related during the last decade, since noncommutative symmetric spaces have been found to be a nice setting in which to generalize classical theorems and formulate new results. Examples linked to the present paper can be found in \cite{LusXu07}, \cite{KalSuk08}, \cite{LeMSuk08}, \cite{DPPS11}, \cite{DirRic13} and \cite{LevSukZan18}. 

\subsection*{Question 1: Interpolation of $L_p$-spaces and right-majorization}

Characterisations and sufficient conditions garanteeing that a symmetric space is an interpolation space for a couple of $L_p$-spaces have been investigated in the past decades. Sufficient conditions can be formulated in terms of convexity, concavity or Boyd indices and characterisations rely on the computation of the $K$-functional for the couple $(L_p,L_q)$, see \cite{KalMon03} for a survey. In \cite{LevSukZan18}, Levitina, Sukochev and Zanin conjecture a new characterisation of sequence spaces $E$ for which there exists $p<2$ such that $E$ is an interpolation space for the couple $(\ell^p,\ell^2)$. It can be stated in terms of \emph{right-majorization} and \emph{right-monotonicity}. Let us define these notions as well as their left counterparts. For functions (or sequences) $f$ and $g$ which admit nonincreasing rearrangements $f^*$ and $g^*$, 
\begin{itemize}
\item we say that $f$ right-majorizes $g$ and write $f\triangleright g$ if: $\forall t>0, \Int{t}{\infty} f^* \geq \Int{t}{\infty} g^*,$
\item we say that $f$ left-majorizes $g$ and write $f\succ g$ if: $\forall t>0, \Int{0}{t} f^* \geq \Int{0}{t} g^*.$
\end{itemize}
For any $p,q \in (0,\infty)$, a symmetric quasi-Banach space $E$ is said to be:
\begin{itemize}
\item \emph{right-$q$-monotone} if there exists $C>0$ such that for all $f\in E$ and $g\in L_0$, 
$\md{f}^q \triangleright \md{g}^q \Rightarrow g\in E, \Norm{g}_E \leq C\Norm{f}_E.$
\item \emph{left-$p$-monotone} if there exists $C>0$ such that for all $f\in E$ and $g\in L_0$, 
$\md{f}^p \succ \md{g}^p \Rightarrow g\in E, \Norm{g}_E \leq C\Norm{f}_E.$
\end{itemize}
For a definition of $L_0$ see Subsection \ref{sub:sym}.
\begin{conjecture}[Levitina, Sukochev, Zanin]\label{conj:LevSukZan}
Let $E$ be a quasi-Banach symmetric sequence space. Then $E$ is right-$2$-monotone if and only if there exists $p\in (0,2]$ such that $E$ is an interpolation space between $\ell^p$ and $\ell^2$.
\end{conjecture}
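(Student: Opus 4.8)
The plan is to derive the conjecture from the main characterisation theorem of this paper: for $0<p<q\leq\infty$, a quasi-Banach symmetric sequence space is an interpolation space for $(\ell^p,\ell^q)$ if and only if it is left-$p$-monotone and right-$q$-monotone. Specialising to $q=2$, the forward implication of the conjecture is immediate — if $E$ is an interpolation space for $(\ell^p,\ell^2)$ with $p<2$, that theorem gives in particular that $E$ is right-$2$-monotone, while the case $p=2$ forces $E=\ell^2$, which is right-$2$-monotone with constant $1$ (if $\md{f}^2\triangleright\md{g}^2$ and $f\in\ell^2$, letting the parameter tend to $0$ gives $\Norm{g}_{\ell^2}\leq\Norm{f}_{\ell^2}$). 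So the whole content is the converse: starting from a right-$2$-monotone $E$, I need to produce some $p\in(0,2]$ for which $E$ is also left-$p$-monotone, and the characterisation theorem then finishes the proof.

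For this I would first record two facts about a right-$2$-monotone $E$. First, $E$ embeds continuously into $\ell^2$: if some $f\in E$ did not lie in $\ell^2$ then $\int_t^\infty(f^*)^2=\infty$ for every $t$, hence $\md{f}^2\triangleright\md{ne_1}^2$ for all $n$, and right-$2$-monotonicity would give $n\Norm{e_1}_E=\Norm{ne_1}_E\leq C\Norm{f}_E$ for all $n$, which is absurd; and for any $f\in E$ one has $\md{f}^2\triangleright\Norm{f}_{\ell^2}^2\,e_1$, since $(\md{f}^2)^*=(f^*)^2$ with $f^*$ constant equal to its first entry on $(0,1]$, so $\int_0^t(f^*)^2=t(f^*_1)^2\leq t\Norm{f}_{\ell^2}^2$ there, whence $\int_t^\infty(f^*)^2\geq(1-t)_+\Norm{f}_{\ell^2}^2=\int_t^\infty(\Norm{f}_{\ell^2}^2e_1)^*$ for all $t$; thus $\Norm{f}_{\ell^2}\Norm{e_1}_E\leq C\Norm{f}_E$. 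Second, since $E$ is quasi-Banach it is $r_0$-normable for some $r_0\in(0,1]$ by the Aoki--Rolewicz theorem, so $\ell^{r_0}\hookrightarrow E$ and the lower Boyd index of $E$ is at least $r_0$, in particular positive.

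Now I would fix any $r\in(0,r_0)$, so that $r<2$ and $\ell^r\hookrightarrow\ell^{r_0}\hookrightarrow E\hookrightarrow\ell^2$, and check that $E$ is left-$r$-monotone. If $\md{f}^r\succ\md{g}^r$, then for every $t>0$,
\[
t\,g^*(t)^r\;\leq\;\int_0^t(g^*)^r\;\leq\;\int_0^t(f^*)^r,\qquad\text{hence}\qquad g^*(t)\;\leq\;\Big(\tfrac1t\int_0^t(f^*)^r\Big)^{1/r}=:\big(H_r f^*\big)(t),
\]
and $H_r f^*$ is nonincreasing, so $\Norm{g}_E=\Norm{g^*}_E\leq\Norm{H_r f^*}_E$. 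The $r$-th power Hardy averaging operator $H_r$ is bounded on $E$ because $r$ lies below the lower Boyd index of $E$ — the quasi-Banach counterpart of the classical boundedness of the Hardy operator on symmetric spaces with lower Boyd index exceeding $1$, which here comes from the $r_0$-normability of $E$. Hence $\Norm{g}_E\leq\Norm{H_r}_{E\to E}\Norm{f}_E$, so $E$ is left-$r$-monotone. Being left-$r$-monotone and right-$2$-monotone with $r<2$ and $\ell^r\hookrightarrow E\hookrightarrow\ell^2$, the characterisation theorem yields that $E$ is an interpolation space for $(\ell^r,\ell^2)$ — the desired conclusion with $p=r$.

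I expect the real obstacle to be the characterisation theorem itself rather than this reduction. Splitting $K$-monotonicity for the couple $(L_p,L_q)$ into the two one-sided monotonicity conditions is where the substantive work sits: it should require the Calder\'on--Mityagin (``$C$-couple'') property of couples of $L_p$-spaces, the explicit $K$-functional, and a careful recombination of the behaviour near the largest values (governed by $\ell^p$) with that of the tail (governed by $\ell^q$). Granting that, the reduction above is essentially routine; the only points deserving care in the quasi-Banach category are the positivity of the lower Boyd index and the boundedness of $H_r$ below it, together with the sequence-space estimate $\int_0^t(f^*)^2\leq t\Norm{f}_{\ell^2}^2$ on $(0,1]$ — which is false for genuine function spaces and is exactly where the function-space form of the conjecture calls for a separate argument.
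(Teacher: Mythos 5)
Your reduction of the hard direction (right-$2$-monotone $\Rightarrow$ interpolation) is essentially the strategy the paper uses for function spaces, transplanted to sequences: produce a universal left-$r$-monotonicity for some small $r$ and feed it, together with right-$2$-monotonicity, into the one-directional Theorem \ref{thm:sequencespace}. That part is sound up to one assertion you do not prove: the boundedness of the discrete operator $H^{(r)}$ on a quasi-Banach symmetric \emph{sequence} space for $r$ below the lower Boyd index. The paper only has this for function spaces (Lemma \ref{lem : Montgomery}, Corollary \ref{cor:allpmon}). A discrete version does hold (dominate $H^{(r)}f^*$ by $\sum_{j\geq 0} 2^{-j/r}D_{2^j}f^*$ after a dyadic regrouping and sum the series using $r_0$-normability, since $r<r_0$), but it has to be written out, not cited by analogy; as you note, the pointwise estimate $\int_0^t (f^*)^2\leq t\Norm{f}_{\ell^2}^2$ and related conveniences are special to the discrete setting.

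The genuine gap is in the direction you call ``immediate''. You invoke the two-sided characterisation ``$E$ is an interpolation space for $(\ell^p,\ell^2)$ if and only if it is left-$p$-monotone and right-$2$-monotone'' to deduce right-$2$-monotonicity from interpolation. For sequence spaces the paper proves only the implication from monotonicity to interpolation (Theorem \ref{thm:sequencespace}); the converse requires $(\ell^p,\ell^2)$ to be a Calder\'on couple, which, as the remark following that theorem records (crediting M.~Cwikel), is not known when $p<1$. For $p\geq 1$ Sparr's theorem applies and your argument goes through --- that is exactly the Cwikel--Nilsson case the paper recovers --- but for a witnessing $p<1$ this direction is open. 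Your closing paragraph correctly identifies the characterisation theorem as ``the real obstacle'' and then grants it; what may legitimately be granted is only the function-space version (Theorem \ref{thm:functionspace}) and the one-sided sequence version. This is precisely why the paper claims to solve the conjecture only in the setting of function spaces (Theorem \ref{thm:intro:conjfonc}) and leaves the quasi-Banach sequence statement as a conjecture. To close your argument for $p<1$ you would need either a direct construction of an operator, bounded simultaneously on $\ell^p$ and $\ell^2$, carrying (a controlled modification of) $f$ to $g$ whenever $\md{f}^2\triangleright\md{g}^2$ --- a right-majorization analogue of Proposition \ref{prop:LS:f mapsto g if f succ g} --- or a proof that membership in the interpolation spaces of $(\ell^p,\ell^2)$ for $p<1$ implies membership in those of $(\ell^1,\ell^2)$; neither is available in the paper.
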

Cwikel and Nilsson proved the conjecture for Banach spaces with the Fatou property in \cite{CwiNil18}. We recover their result and prove that the conjecture holds for function spaces.
\begin{theorem}\label{thm:intro:conjfonc}
Let $E$ be a quasi-Banach symmetric function space and $q\in (0,\infty)$. The two following properties are equivalent:
\begin{enumerate}
\item $E$ is right-$q$-monotone,
\item there exists $p\leq q$ such that $E$ is an interpolation space for the couple $(L_p,L_q)$.
\end{enumerate}
\end{theorem}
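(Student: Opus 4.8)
The plan is to derive Theorem~\ref{thm:intro:conjfonc} from the paper's main characterisation of interpolation spaces, which I would establish separately: for $0<p<q\le\infty$, a quasi-Banach symmetric function space $E$ is an interpolation space for the couple $(L_p,L_q)$ if and only if it is both left-$p$-monotone and right-$q$-monotone. Granting this characterisation, both implications of the theorem become short. For $(2)\Rightarrow(1)$: if $p<q$ and $E$ is an interpolation space for $(L_p,L_q)$, then $E$ is right-$q$-monotone by the characterisation; and if $p=q$, intermediacy for the trivial couple forces $E=L_q$ (closed graph theorem), which is directly right-$q$-monotone since $|f|^q\triangleright|g|^q$ yields $\|g\|_q^q=\sup_{t>0}\Int{t}{\infty}(g^*)^q\le\sup_{t>0}\Int{t}{\infty}(f^*)^q=\|f\|_q^q$.

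For $(1)\Rightarrow(2)$ the only real point is to exhibit a suitable $p\le q$. I would first observe that \emph{every} symmetric quasi-Banach function space $E$ is left-$p$-monotone for all sufficiently small $p>0$. Fix $s\in(0,1]$ with $E$ $s$-normable (Aoki--Rolewicz); splitting $\chi_{(0,1)}$ into $n$ intervals of length $1/n$ gives $\|D_\tau\|_{E\to E}\ge\phi_E(\tau)/\phi_E(1)\gtrsim\tau^{1/s}$ for $\tau\le 1$, so the lower Boyd index obeys $p_E\ge s$. Now take $0<p\le s/2$: then $E^{(1/p)}$ is a Banach symmetric function space with lower Boyd index $p_E/p\ge 2$, so by Boyd's theorem the Hardy operator $Ph(t)=\tfrac1t\Int{0}{t}h$ is bounded on $E^{(1/p)}$. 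Hence, if $|f|^p\succ|g|^p$ --- i.e. $(g^*)^p$ is Hardy--Littlewood submajorised by $(f^*)^p$ --- then $(g^*)^p\le P\big((f^*)^p\big)$ pointwise (using $g^*\le Pg^*\le Pf^*$ for decreasing rearrangements), so
\[
\|g\|_E\le\big\|(P((f^*)^p))^{1/p}\big\|_E=\|P((f^*)^p)\|_{E^{(1/p)}}^{1/p}\lesssim\|(f^*)^p\|_{E^{(1/p)}}^{1/p}=\|f\|_E,
\]
which is left-$p$-monotonicity. Choosing $p=\tfrac12\min(s,q)$ we get $0<p<q$, $E$ is left-$p$-monotone by the above and right-$q$-monotone by hypothesis, and the characterisation yields that $E$ is an interpolation space for $(L_p,L_q)$, i.e. $(2)$.

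This reduces everything to the characterisation, which is where the difficulty lies. For the implication that an interpolation space for $(L_p,L_q)$ is left-$p$- and right-$q$-monotone I would use that $(L_p,L_q)$ is a Calder\'on couple (valid also in the quasi-Banach range): it then suffices to check that $|f|^q\triangleright|g|^q$ forces $K(t,g;L_p,L_q)\lesssim K(t,f;L_p,L_q)$ for every $t$, which follows from Holmstedt's formula $K(t,f;L_p,L_q)\approx\big(\Int{0}{t^\rho}(f^*)^p\big)^{1/p}+t\big(\Int{t^\rho}{\infty}(f^*)^q\big)^{1/q}$, $\tfrac1\rho=\tfrac1p-\tfrac1q$: the tail term is dominated directly, and the head term via a Hardy-type inequality that converts the tail control of $(g^*)^q$ into $L_p$-control of the head of $g^*$; the left-$p$-monotonicity is handled symmetrically.

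The converse --- that left-$p$-monotonicity and right-$q$-monotonicity together force $K$-monotonicity, hence (Calder\'on couple again) being an interpolation space --- is the main obstacle. Here, given $K(\cdot,g;L_p,L_q)\lesssim K(\cdot,f;L_p,L_q)$, one must split $g=g_0+g_1$ so that $g_0^*$ is left-$p$-majorised and $g_1^*$ is right-$q$-majorised by functions whose $E$-norm is $\lesssim\|f\|_E$ --- a $K$-divisibility argument tailored to this particular couple --- and only then invoke the two monotonicity hypotheses and recombine. The quasi-Banach setting (no triangle inequality, only $s$-normability) forces care at each combination step, and it is this decomposition lemma, rather than the passage from the characterisation to Theorem~\ref{thm:intro:conjfonc}, that I expect to be the technical heart.
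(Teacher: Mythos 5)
Your overall route is the same as the paper's: Theorem \ref{thm:intro:conjfonc} is obtained there in exactly the way you propose, by combining the characterisation (Theorem \ref{thm:functionspace}: $E$ is an interpolation space for $(L_p,L_q)$ iff it is left-$p$-monotone and right-$q$-monotone) with the fact that every quasi-Banach symmetric function space is left-$p$-monotone for all sufficiently small $p>0$ (Corollary \ref{cor:allpmon}). Your sketch of the characterisation itself (Calder\'on couple plus Holmstedt's formula for one implication, a decomposition of $g$ adapted to the $K$-functional for the other) also matches the paper's Section \ref{4}, so I take that deferred part as given. Your separate treatment of $p=q$ in $(2)\Rightarrow(1)$ is harmless; the paper subsumes it into Theorem \ref{thm:functionspace}.

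The one genuine flaw is in your derivation of left-$p$-monotonicity for small $p$. The estimate $\|D_\tau\|_{E\to E}\geq \phi_E(\tau)/\phi_E(1)\gtrsim\tau^{1/s}$ for $\tau\leq 1$ is a \emph{lower} bound on the norms of the \emph{contractive} dilations; it only gives information about the index $\alpha_E$ (the one governing the tail operator $H_{(q)}$), and says nothing about $\beta_E=\inf_{t>1}\log\|D_t\|/\log t$, which is the index Boyd's theorem requires in order to bound the averaging operator $P$ (equivalently $H^{(p)}$). So as written this step does not yield the boundedness of $P$ on $E^{(1/p)}$. What you need is the upper bound $\|D_t\|_{E\to E}\leq t^{1/s}$ for $t>1$, i.e.\ $\beta_E\leq 1/s$; this does follow from $s$-normability, since $D_2f^*$ is the decreasing rearrangement of a sum of two disjointly supported copies of $f^*$, whence $\|D_2f^*\|_E\leq 2^{1/s}\|f^*\|_E$ (this is the content of Remark \ref{rem:beta}, stated there with the quasi-norm constant in place of $s$). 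With that correction, the rest of your argument --- $g^*\leq\bigl(P((f^*)^p)\bigr)^{1/p}$ together with the boundedness of the Hardy operator --- is exactly the paper's Proposition \ref{prop:boydintinfty}, and the reduction goes through.
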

The first ingredient to prove this theorem is a characterisation of interpolation spaces for the couple $(L_p,L_q)$ given in Theorem \ref{thm:functionspace} \emph{i.e.} a symmetric space $E$ is an interpolation space for the couple $(L_p,L_q)$ if and only if it is left-$p$-monotone and right-$q$-monotone. Although we formulate the result differently, it is an generalisation to the quasi-Banach case of a known fact (see Theorem 7.2 in \cite{KalMon03}). The second is to remark that for any quasi-Banach symmetric space $E$, there exists $p>0$ such that $E$ is an interpolation space for the couple $(L_p,L_\infty)$. This can be obtained directly from the literature by considering the Boyd indices of $E$, $\alpha_E$ and $\beta_E$ (see Subsection \ref{subsection Boyd}).

In section \ref{3}, we extend part of the Lorentz-Shimogaki theorem to the quasi-Banach setting. This was, to the best of our knowledge, not a direct consequence of previous works despite the fact that very similar results can be found, see \cite{Cwi81}, \cite{Spa78}.

In section \ref{4}, the characterisation, which is our main theorem, is proven as well as its partial extension to sequence spaces. We also give an application to $p$-convexifications. 

In section \ref{5}, we relate left-$p$-monotonicity to $p$-convexity and right-$q$-monotonicity to $q$-concavity. A direct consequence of this, combined with the main theorem is to recover and generalize known sufficient conditions for a symmetric space $E$ to be an interpolation space for the couple $(L_p,L_q)$. In the following theorem, we compile some consequences of Theorem \ref{thm:LorentzShimogakiquasiBanach}, Theorem \ref{thm:functionspace}, Section \ref{5} and Subsection \ref{subsection Boyd} in that direction.
\begin{theorem}\label{thm : compilation}
Assume that $E$ is a symmetric space and $p,q\in (0,\infty)$ with $p<q$. For $E$ to be an interpolation space for the couple $(L_p,L_q)$, it suffices that $E$ verify one of the conditions among:
\begin{itemize}
\item[P1.] $E$ is left-$p$-monotone,
\item[P2.] $p < 1/\beta_E$,
\item[P3.] $E$ has the Fatou property and is $p$-convex, 
\item[P4.] $E$ is an interpolation space for the couple $(L_p,L_\infty)$,
\end{itemize}
{\rm and} one of the conditions among:
\begin{itemize}
\item[Q1.] $E$ is right-$q$-monotone,
\item[Q2.] $q > 1/\alpha_E$,
\item[Q3.] $E$ has the Fatou property and is $q$-concave.
\end{itemize}
\end{theorem}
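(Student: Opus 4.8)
The plan is to derive everything from the characterisation of Theorem~\ref{thm:functionspace}, which tells us that a symmetric space is an interpolation space for $(L_p,L_q)$ as soon as it is both left-$p$-monotone and right-$q$-monotone. Since $p<q$ is assumed, it therefore suffices to prove that each of P1--P4 forces $E$ to be left-$p$-monotone and that each of Q1--Q3 forces $E$ to be right-$q$-monotone, and then to invoke Theorem~\ref{thm:functionspace} once.

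\emph{Left-hand conditions.} P1 is the conclusion, so there is nothing to prove. For P4, I would apply Theorem~\ref{thm:functionspace} to the couple $(L_p,L_\infty)$: being an interpolation space for that couple is equivalent to being left-$p$-monotone and right-$\infty$-monotone, so in particular it yields left-$p$-monotonicity (the endpoint $q=\infty$ requires a brief check that it is genuinely covered, or can be argued by hand via $K$-monotonicity for $(L_1,L_\infty)$ applied to the $p$-convexification). For P2, I would combine the quasi-Banach Lorentz--Shimogaki theorem of Section~\ref{3} (Theorem~\ref{thm:LorentzShimogakiquasiBanach}) with the discussion of Boyd indices in Subsection~\ref{subsection Boyd}: the strict inequality $p<1/\beta_E$ is exactly the condition ensuring boundedness of the relevant averaging operator, hence that $E$ is an interpolation space for $(L_p,L_\infty)$, and one then falls back on the P4 case. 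Finally, P3 is covered by Section~\ref{5}, where $p$-convexity together with the Fatou property is shown to imply left-$p$-monotonicity.

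\emph{Right-hand conditions.} Symmetrically, Q1 is the conclusion. Q3 follows from Section~\ref{5}, where $q$-concavity together with the Fatou property is shown to imply right-$q$-monotonicity. For Q2, I would again appeal to Theorem~\ref{thm:LorentzShimogakiquasiBanach} and Subsection~\ref{subsection Boyd}: the condition $q>1/\alpha_E$ bounds the relevant dilation operator and produces right-$q$-monotonicity directly (note that this is the side opposite to the $(L_q,L_\infty)$-interpolation condition, so no detour through a further couple is available or needed here).

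\emph{Main obstacle.} The statement is essentially a bookkeeping exercise, all the substance being in the quoted results. The only points requiring real attention are: (i) confirming that the endpoint $q=\infty$ is legitimately within the scope of Theorem~\ref{thm:functionspace} when P4 is used; and (ii) matching the \emph{strict} Boyd-index inequalities of P2 and Q2 with the precise hypotheses of the quasi-Banach Lorentz--Shimogaki theorem, making sure in particular that no Banach-space or Fatou assumption is implicitly needed there, since $E$ is only assumed quasi-Banach. The hypothesis $p<q$ itself intervenes only once, at the very end, to make Theorem~\ref{thm:functionspace} applicable to the couple $(L_p,L_q)$.
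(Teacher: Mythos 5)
Your plan is correct and is essentially the proof the paper intends: reduce P2--P4 to P1 and Q2--Q3 to Q1, then apply Theorem~\ref{thm:functionspace} once (with Theorem~\ref{thm:LorentzShimogakiquasiBanach} for $p<1$, resp.\ the classical Lorentz--Shimogaki statement in Theorem~\ref{thm:Kmon for Lp literature} for $p\geq 1$, handling the P4 step, Section~\ref{5} handling P3 and Q3, and Subsection~\ref{subsection Boyd} handling P2 and Q2). One caveat on Q2: your parenthetical suggests that boundedness of the averaging operator yields right-$q$-monotonicity ``directly, with no detour through a further couple,'' but the symmetric analogue of the P2 argument fails there, since $H_{(q)}(g)\geq g^*$ is false in general (take $g^*=\Ind_{(0,1)}$ and $s\to 1$); the paper's Corollary~\ref{cor:alphaqmon} instead obtains Q2~$\Rightarrow$~Q1 by passing through Proposition~\ref{prop:boydint} and the reverse implication of Theorem~\ref{thm:functionspace}, and you should cite it in that form.
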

Theorems of this form have already appeared in the literature, for example in \cite[Theorem 3.2]{DPPS11}, \cite[Theorem 1]{AM04} or in the survey \cite{KalMon03}. The version presented here is quite general, except for the fact that we do not consider the hypothesis of being {\it separable} which could be used instead of the {\it Fatou property}.

\subsection*{Question 2: Noncommutative Khintchine inequalities in symmetric spaces} Noncommutative Khintchine inequalities have been introduced in \cite{Lus86} for $L_p$-spaces and have since been a crucial tool, in particular for the development of noncommutative harmonic analysis. They have been further studied by many different authors and in the general context of symmetric spaces (\cite{LusPis91}, \cite{LusXu07}, \cite{LeMSuk08}, \cite{DPPS11}). The aim of Khintchine inequalities is, given a specific sequence of random variables $(\xi_i)_{i\in\Nb}$ in a (noncommutative) probability space $\A$ (independent Rademacher variables, free Haar unitaries \textit{i.e.} freely independent variables which are uniformly distributed on the unit circle of $\Cb$ ...) and a measure space of coefficients $\M$ to provide computable expressions for norms of elements of the form:
$$Gx := \sum_{i=1}^\infty x_i \otimes \xi_i,$$
defined in $\M \overline{\otimes} \A$, where $x = (x_i)_{i\in\Nb}$ is a finite sequence of elements of $\M$. 

Before going into more details, let us introduce a notation that will be used in the remainder of the text. For quantities $A$ and $B$, we write $A(x) \les B(y)$ if there exists a constant $c$ independent of $x$ and $y$ such that $A(x) \leq cB(y)$.  Additionally, we write $A(x) \approx B(y)$ if $A(x)\les B(y)$ and $B(y)\les A(x)$.

The original Khintchine inequalities considered Rademacher variables and $\M$ commutative. In this case for any $p\in (0,\infty)$,
$$\Norm{Gx}_p \approx \Norm{\p{\sum_{i=0}^\infty \md{x_i}^2}^{1/2}}_p.$$
In the noncommutative context however, the formulation of noncommutative Khintchine inequalities in $L_p$ depends of whether $p\leq 2$ or $p\geq 2$. This is due to the fact that two different square functions can be defined:
$$S_c(x) = \p{\sum_{i=1}^\infty x_i^*x_i}^{1/2}\ \text{and}\ S_r(x) = \p{\sum_{i=1}^\infty x_ix_i^*}^{1/2}.$$
With these notations, the noncommutative Khintchine inequalities state that if $p\in (2,\infty)$:
$$
\Norm{Gx}_p \approx \max \p{\Norm{S_c(x)}_p, \Norm{S_r(x)}_p},
$$
and if $p\leq 2$:
$$
\Norm{Gx}_p \approx \inf \left\lbrace \Norm{S_c(z)}_p + \Norm{S_r(y)}_p:y+z = x \right\rbrace.
$$
It is therefore natural to try and characterize the symmetric spaces in which one of these (quasi-)norm equivalences hold. More precisely, denote by $S(\M)$ the space of finite sequences of finitely supported elements of $\M$ and define the following properties:
\begin{itemize}
\item $Kh_\cap(E,\M)$: for any $x\in S(\M)$, 
$$\Norm{Gx}_{E} \approx \max \p{\Norm{S_c(x)}_E, \Norm{S_r(x)}_E} =: \Norm{x}_{R_E\cap C_E},$$
\item $Kh_\Sigma(E,\M)$: for any $x\in S(\M)$, 
$$\Norm{Gx}_{E} \approx \inf \left\lbrace \Norm{S_c(z)}_E + \Norm{S_r(y)}_E:y+z = x \right\rbrace =: \Norm{x}_{R_E + C_E}.$$
\end{itemize}
If $\B(\ell^2) \overline{\otimes} L_{\infty}(0,1)$ embeds (by a unital trace preserving homomorphism) in $\M$, we characterize the symmetric spaces having properties $Kh_\cap(E,\M)$ and $Kh_\Sigma(E,\M)$ in terms of monotonicity and interpolation properties if the sequence $(\xi_i)_{i\in\Nb}$ is constituted of free Haar unitaries or independent Rademacher variables. 
\begin{theorem}\label{thm:intro:khin}
Let $E$ be a quasi-Banach symmetric space with the Fatou property. Assume that $\M = \B(\ell^2) \overline{\otimes} L_{\infty}(0,1)$ and that $(\xi_i)_{i\in\Nb}$ is a sequence of free Haar unitaries in $\A$. Denote by $\E: \B(\ell^2) \to \B(\ell^2)$ the conditional expectation onto the diagonal. Then the following properties are equivalent:
\begin{enumerate}[label=(\roman*)]
\item $Kh_\cap(E,\M)$,
\item $E$ is left-$2$-monotone,
\item $E$ is an interpolation space for the couple $(L_2,L_\infty)$,
\item $\forall x\in\M^+, \Norm{(\E\otimes Id)(x^2)^{1/2}}_E \les \Norm{x}_E$.
\end{enumerate}
The following properties are also equivalent:
\begin{enumerate}[label=(\roman*)]
\item $Kh_\Sigma(E,\M)$,
\item $E$ is right-$2$-monotone,
\item there exists $p<2$ such that $E$ is an interpolation space for the couple $(L_p,L_2)$,
\item $\forall x\in\M^+, \Norm{x}_E \les \Norm{(\E\otimes Id)(x^2)^{1/2}}_E$.
\end{enumerate}
\end{theorem}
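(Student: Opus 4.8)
The plan is to run each four-term chain as a cycle $\mathrm{(ii)}\Rightarrow\mathrm{(iii)}\Rightarrow\mathrm{(i)}\Rightarrow\mathrm{(iv)}\Rightarrow\mathrm{(ii)}$ (and likewise with primes), deducing the two ``classical'' equivalences from the results already in hand and connecting them to the Khintchine statements through the conditional-expectation conditions. For $\mathrm{(ii)}\Leftrightarrow\mathrm{(iii)}$, specialise Theorem~\ref{thm:functionspace} to the couple $(L_2,L_\infty)$: being an interpolation space for $(L_2,L_\infty)$ amounts to left-$2$-monotonicity. For $\mathrm{(ii')}\Leftrightarrow\mathrm{(iii')}$, Theorem~\ref{thm:intro:conjfonc} with $q=2$ gives right-$2$-monotonicity $\Leftrightarrow$ interpolation for some $(L_p,L_2)$ with $p\le 2$; to force $p<2$, note that a right-$2$-monotone $E$ is, by the Boyd-index remark, an interpolation space for some $(L_{p_0},L_\infty)$, hence left-$p_0$-monotone, hence (left-$p$-monotonicity passing to smaller exponents via Hardy--Littlewood--P\'olya--Karamata applied to the convex function $t\mapsto t^{2/p_0}$) left-$p$-monotone for a suitable $p<2$, and then Theorem~\ref{thm:functionspace} gives interpolation for $(L_p,L_2)$.

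Next, $\mathrm{(i)}\Rightarrow\mathrm{(iv)}$ and $\mathrm{(i')}\Rightarrow\mathrm{(iv')}$, by a unitary-dilation argument that genuinely uses the hypothesis $\M=\B(\ell^2)\otimest L_\infty(0,1)$. For $x\in\M^+$ finitely supported (the general case then following from the Fatou property), put $z_k:=x(e_{kk}\otimes 1)$, so that $z=(z_k)_k\in S(\M)$ with $S_c(z)=(\E\otimes Id)(x^2)^{1/2}$ and $S_r(z)=x$, since $\sum_k(e_{kk}\otimes 1)x^2(e_{kk}\otimes 1)=(\E\otimes Id)(x^2)$ and $\sum_k x(e_{kk}\otimes 1)x=x^2$. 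As the $\xi_k$ are unitaries supported on the orthogonal $e_{kk}$, the element $U:=\sum_k(e_{kk}\otimes 1)\otimes\xi_k$ is a unitary of $\M\otimest\A$ and $Gz=(x\otimes 1)U$, so $\Norm{Gz}_E=\Norm{x}_E$. Feeding $z$ into $Kh_\cap(E,\M)$ yields $\Norm{(\E\otimes Id)(x^2)^{1/2}}_E\le\max\p{\Norm{S_c(z)}_E,\Norm{S_r(z)}_E}\les\Norm{Gz}_E=\Norm{x}_E$, which is $\mathrm{(iv)}$; feeding $z$ into $Kh_\Sigma(E,\M)$ and using the decomposition $z=0+z$ yields $\Norm{x}_E=\Norm{Gz}_E\les\Norm{z}_{R_E+C_E}\le\Norm{S_c(z)}_E=\Norm{(\E\otimes Id)(x^2)^{1/2}}_E$, which is $\mathrm{(iv')}$.

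The implications $\mathrm{(iii)}\Rightarrow\mathrm{(i)}$ and $\mathrm{(iii')}\Rightarrow\mathrm{(i')}$ come from interpolating known endpoint estimates: $Kh_\cap$ and $Kh_\Sigma$ hold isometrically in $L_2$ by orthonormality of the $\xi_i$; the equivalence $\Norm{Gx}_\infty\approx\max(\Norm{S_c(x)}_\infty,\Norm{S_r(x)}_\infty)$ for free Haar unitaries is Haagerup's inequality in Buchholz's operator-valued form; and the noncommutative Khintchine inequality $Kh_\Sigma$ in $L_p$ is classically known for every $0<p\le 2$. To transfer to $E$ one realises $C_E(\M)$, $R_E(\M)$ and the spaces $R_E\cap C_E$, $R_E+C_E$ as interpolation spaces for the corresponding $(L_2,L_\infty)$, resp.\ $(L_p,L_2)$, couples --- they sit in ambient $L_E$-spaces over $\B(\ell^2)\otimest\M$ as ranges of projections bounded simultaneously on all $L_r$ --- and checks that $x\mapsto Gx$ and the associated free-Fourier-coefficient map are bounded between the relevant couples (the reverse bounds coming once more from Buchholz's inequality at $L_\infty$ and Bessel's inequality at $L_2$). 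Combined with the quasi-Banach interpolation machinery of Section~\ref{4}, this gives the two equivalences at $E$.

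The remaining step, $\mathrm{(iv)}\Rightarrow\mathrm{(ii)}$ and $\mathrm{(iv')}\Rightarrow\mathrm{(ii')}$, is where I expect the real work, and it is here that the richness of $\B(\ell^2)\otimest L_\infty(0,1)$ is indispensable. One half of the underlying majorization is soft: $\E\otimes Id$ is a trace-preserving conditional expectation, hence doubly stochastic, so $\md{x}^2\succ\md{(\E\otimes Id)(x^2)^{1/2}}^2$ and simultaneously $\md{(\E\otimes Id)(x^2)^{1/2}}^2\triangleright\md{x}^2$; the first inequality gives $\mathrm{(ii)}\Rightarrow\mathrm{(iv)}$ and the second gives $\mathrm{(ii')}\Rightarrow\mathrm{(iv')}$. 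The content is the converse realisation: given $f\in E$ and $g\in L_0$ with $\md{f}^2\succ\md{g}^2$ (resp.\ $\md{f}^2\triangleright\md{g}^2$), one must manufacture $x\in\M^+$ with $\Norm{x}_E\approx\Norm{f}_E$ (resp.\ $\gtrsim\Norm{g}_E$) whose conditional square function satisfies $\Norm{(\E\otimes Id)(x^2)^{1/2}}_E\gtrsim\Norm{g}_E$ (resp.\ $\les\Norm{f}_E$), so that $\mathrm{(iv)}$ (resp.\ $\mathrm{(iv')}$) closes the chain of inequalities. The mechanism I would use: block-averaging operators are implemented by the diagonal conditional expectation on $\B(\ell^2)\otimest L_\infty(0,1)$ --- conjugating a diagonal operator by a block-diagonal discrete Fourier transform turns its diagonal into block averages --- and every Hardy--Littlewood--P\'olya majorization is attained, up to a cofinal approximation absorbed by the Fatou property, by such block-averagings, with the $L_\infty(0,1)$ factor supplying the continuous resolution and the head/tail symmetry of the construction covering the right-majorization case. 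Thus the genuine obstacle is a Schur--Horn/Hardy--Littlewood--P\'olya realisation lemma for $\B(\ell^2)\otimest L_\infty(0,1)$, together with the bookkeeping needed to keep all constants uniform in the quasi-Banach setting.
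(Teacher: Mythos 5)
Your architecture coincides with the paper's: (ii)$\Leftrightarrow$(iii) from the Lorentz--Shimogaki/Calder\'on-couple results of Sections \ref{3}--\ref{4} together with Corollary \ref{cor:allpmon} to force $p<2$; the column $z_k=x(e_{kk}\otimes 1)$ with $Gz=(x\otimes 1)U$, $U$ unitary, for (i)$\Rightarrow$(iv); citation of the endpoint Khintchine inequalities plus the interpolation result of \cite{Cad18} for (iii)$\Rightarrow$(i); and a Schur--Horn realisation followed by a Fatou-property approximation for (iv)$\Rightarrow$(ii) --- the paper's Proposition \ref{prop:useSchurHorn}, Lemma \ref{lem:technique:fmaj g with norm f = norm g} (normalising to $\Norm{f}_2=\Norm{g}_2$ so that Schur--Horn applies, with the remark that equal norms interchange $\succ$ and $\triangleright$) and Lemmas \ref{lem:approxmaj1}, \ref{lem:FtoE1}, \ref{lem:concave} carry out exactly the programme you sketch in your last paragraph. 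One substantive caveat: for the bound $\Norm{x}_{R_E\cap C_E}\les\Norm{Gx}_E$ in (iii)$\Rightarrow$(i) you propose to interpolate from the $L_2$ and $L_\infty$ endpoints, which requires identifying $R_E\cap C_E$ as an interpolation space for the couple $\p{R_{L_2}\cap C_{L_2},\,R_{L_\infty}\cap C_{L_\infty}}$; interpolation of intersections is not automatic and is essentially as delicate as the interpolation of sums to which \cite{Cad18} is devoted. The paper avoids this entirely: since $(Id\otimes\tau_{\A})(\md{Gx}^2)=S_c(x)^2$ and a trace-preserving conditional expectation is doubly stochastic, one has $\md{Gx}^2\succ S_c(x)^2$ and $\md{Gx}^2\triangleright S_c(x)^2$, so left-$2$-monotonicity (resp.\ right-$2$-monotonicity) yields the needed inequality in two lines (Lemma \ref{lem:p-montokh}). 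You already invoke this doubly-stochastic observation for $\E\otimes Id$ when discussing (ii)$\Rightarrow$(iv); applying it to $Id\otimes\tau_{\A}$ instead replaces the risky interpolation step and leaves only the opposite, space-independent inequality of $Kh_\cap$ to the literature. A minor point: Theorem \ref{thm:functionspace} is stated for $q<\infty$, so (ii)$\Leftrightarrow$(iii) for $(L_2,L_\infty)$ should be referred to the Calder\'on-couple statement for $(L_p,L_\infty)$, $p\geq 1$, or to the remark following Corollary \ref{cor:alphaqmon}.
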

A similar statement holds for Rademacher variables. In this case $Kh_\cap(L_\infty,\M)$ never holds but this problem can be dealt with thanks to works of Astashkin \citep{Ast09}. We are very grateful to D. Zanin for pointing out this reference to us. 

In the theorem above, the most difficult implications to prove are $(ii) \Rightarrow (i)$ (or $(iii) \Rightarrow (i)$) and for these we will refer to \cite{DirRic13} where the case of $Kh_\cap$ is handled, \cite{PisRic17} where $Kh_\Sigma$ is proven if $E$ is an $L_p$-space and \cite{Cad18} which allows to interpolate the previous result and obtain $Kh_\Sigma$ for a general symmetric space $E$. The equivalence between $(ii)$ and $(iii)$ is of purely commutative nature. It is where the two problems we are interested in intersect and is obtained in sections \ref{3} and \ref{4}. 

In section \ref{section:khintchine}, we prove that $(i) \Rightarrow (iv) \Rightarrow (ii)$. In particular, $(iv) \Rightarrow (ii)$ is an application of the following well-known theorem (see \cite{Hor54}):
\begin{theorem}[Schur-Horn]\label{thm:schurhorn}
Let $N\in\Nb$. Let $a,b \in \Rb_+^N$ be non-increasing sequences such that:
$$\forall k\leq N, \sum_{i=1}^k a_i \geq \sum_{i=1}^k b_i\ \text{and}\ \sum_{i=1}^N a_i = \sum_{i=1}^N b_i.$$
Then,
\begin{itemize}
\item $b$ belongs to the convex hull of $\set{(a_{\sigma(1)},\dots,a_{\sigma(N)}): \sigma \in S_n} \subset \Rb^N$,
\item there exists a Hermitian matrix in $M \in \Mb_n(\Cb)$ such that the eigenvalues of $M$ are given by $a$ and the diagonal of $M$ is given by $b$.
\end{itemize}
\end{theorem}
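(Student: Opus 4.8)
The hypotheses on $a$ and $b$ say precisely that $b$ is majorized by $a$, both vectors being already their own nonincreasing rearrangements. My plan is to prove the second (matrix) assertion first, by induction on $N$, and then to deduce the first from it together with the Birkhoff--von Neumann theorem.

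For the convex hull statement: once a Hermitian matrix $M=U\operatorname{diag}(a_1,\dots,a_N)U^*$ with eigenvalues $a$ and diagonal $b$ has been produced (see the next paragraph), one has $b_i=M_{ii}=\sum_j\md{U_{ij}}^2a_j$, i.e. $b=Da$ with $D=(\md{U_{ij}}^2)_{i,j}$ a doubly stochastic matrix (its rows and columns are the squared moduli of the rows and columns of the unitary $U$). Writing $D=\sum_\sigma\lambda_\sigma P_\sigma$ as a convex combination of permutation matrices via Birkhoff--von Neumann then gives $b=\sum_\sigma\lambda_\sigma(a_{\sigma(1)},\dots,a_{\sigma(N)})$, which is the first bullet. (Alternatively, this bullet is Rado's theorem and can be obtained directly from the Hardy--Littlewood--Pólya description of majorization, bypassing the matrix.)

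For the matrix assertion I would induct on $N$, the case $N=1$ being trivial. In general, by majorization $b_1\le a_1$, while $b_1\ge\frac1N\sum_ib_i=\frac1N\sum_ia_i\ge a_N$; hence $a_N\le b_1\le a_1$ and I may fix $k\in\set{1,\dots,N-1}$ with $a_{k+1}\le b_1\le a_k$. Put $t:=a_k+a_{k+1}-b_1$, which lies in $[a_{k+1},a_k]$, and $s:=\sqrt{(a_k-b_1)(b_1-a_{k+1})}\ge 0$; a one-line computation shows $\begin{pmatrix}b_1&s\\s&t\end{pmatrix}$ has trace $a_k+a_{k+1}$ and determinant $a_ka_{k+1}$, hence eigenvalues $a_k,a_{k+1}$ and diagonal $b_1,t$. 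Let $A'$ be the multiset $\set{a_i:i\neq k,k+1}\cup\set{t}$, of cardinality $N-1$. The crucial step is the claim that $(b_2,\dots,b_N)$ is majorized by the nonincreasing rearrangement of $A'$. Granting this, the induction hypothesis yields an $(N-1)\times(N-1)$ Hermitian matrix $M'$ with eigenvalue multiset $A'$ and diagonal $(b_2,\dots,b_N)$; since $t\in A'$ is an eigenvalue of $M'$, I pick a unit eigenvector $u\in\Cb^{N-1}$ for $t$ and set
\[
M\;=\;\begin{pmatrix}b_1&s\,u^*\\s\,u&M'\end{pmatrix}.
\]
Then $M$ is Hermitian with diagonal $(b_1,b_2,\dots,b_N)$, and in an orthonormal eigenbasis of $M'$ the first coordinate of $\Cb^N$ couples only to the $t$-eigenline (with strength $s$), so $M$ is unitarily equivalent to the direct sum of $\begin{pmatrix}b_1&s\\s&t\end{pmatrix}$ and $\operatorname{diag}(a_i:i\neq k,k+1)$; hence $M$ has eigenvalue multiset $\set{a_1,\dots,a_N}$, completing the induction.

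The main obstacle is the majorization claim for $(b_2,\dots,b_N)$ and $A'$; everything else above is routine linear algebra plus the cited Birkhoff theorem. It is verified by bookkeeping on partial sums: since $a_{k+1}\le t\le a_k$, the nonincreasing rearrangement of $A'$ is $(a_1,\dots,a_{k-1},t,a_{k+2},\dots,a_N)$, whose $m$-th partial sum equals $\sum_{i\le m}a_i$ for $m\le k-1$ and $\sum_{i\le m+1}a_i-b_1$ for $m\ge k$; comparing with the $m$-th partial sum $\sum_{i=2}^{m+1}b_i$ of $(b_2,\dots,b_N)$, for $m\ge k$ the required inequality is exactly $\sum_{i\le m+1}a_i\ge\sum_{i\le m+1}b_i$ (an equality for $m=N-1$), and for $m\le k-1$ it follows from $\sum_{i\le m}a_i\ge\sum_{i\le m}b_i\ge\sum_{i=2}^{m+1}b_i$, the last inequality because $b_1\ge b_{m+1}$. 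This establishes the claim and hence the theorem.
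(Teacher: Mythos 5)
Your proof is correct. Note, however, that the paper does not prove this statement at all: it is quoted as a known result with a pointer to Horn's paper \cite{Hor54}, so there is no internal argument to compare against. What you give is essentially the classical inductive construction (Horn's original argument, in the streamlined Chan--Li form): the reduction to the $2\times2$ block $\begin{pmatrix}b_1&s\\ s&t\end{pmatrix}$ with $t=a_k+a_{k+1}-b_1$ and $s^2=(a_k-b_1)(b_1-a_{k+1})$, the verification that $(b_2,\dots,b_N)$ is majorized by $A'$, and the bordering of $M'$ along a unit $t$-eigenvector. All the delicate points are handled: the existence of $k$ with $a_{k+1}\le b_1\le a_k$, the partial-sum bookkeeping in both ranges $m\le k-1$ and $m\ge k$ (including the endpoint equality), the degenerate case $s=0$, and the fact that $A'$ consists of nonnegative reals so the inductive hypothesis applies. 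Deducing the convex-hull assertion from the matrix assertion via the orthostochastic matrix $(\md{U_{ij}}^2)$ and Birkhoff--von Neumann is also standard and correct (and, as you note, that bullet is independently Rado's theorem). For the purposes of this paper only the two bullets as stated are used (the geometric form in Section 5, the matrix form in Section 6), and your argument establishes both.
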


\section{Preliminaries}

\subsection{Interpolation}

For a detailed exposition of interpolation theory, see \cite{BerLof76}. We simply recall here the main definitions and properties that will be used later on. Note that interpolation theory is often defined in the context of Banach spaces but translates well to the quasi-Banach setting (\cite{BerLof76}, section 2.9). We start with the definition of an interpolation space.

\begin{definition}
Let $(A,B)$ be a compatible couple of quasi-Banach spaces. We say that a quasi-Banach space $E$ is an interpolation space for this couple if $A\cap B \subset E \subset A+B$ with constant $C>0$ if for every bounded operator $T:A+B \to A+B$ such that $\restr{T}{A}$ (resp.$\restr{T}{B}$) is a contraction from $A$ to $A$ (resp. $B$ to $B$), $T$ is bounded from $E$ to $E$ with norm less than $C$. If $C = 1$, we say that $E$ is an exact interpolation space.
\end{definition}

Both for explicit constructions of interpolation spaces with the real method and for the general theory of interpolation, the $K$-functional is a fundamental tool. It is defined as follows:

\begin{definition}
Let $(A,B)$ be a compatible couple of quasi-Banach spaces and $x\in A+B$. For all $t>0$ define the $K$-functional of $x$ by:
$$K_t(x,A,B) = \inf \{\Norm{y}_A + t\Norm{z}_B : y\in A, z\in B, y+z = x \}.$$
\end{definition}

It enables to state a simple sufficient condition for a space to be an interpolation space. 

\begin{definition}
Let $(A,B)$ be a compatible couple of quasi-Banach spaces. We say that a quasi-Banach space $E$ is $K$-monotone for $(A,B)$ with constant $C>0$ if $A \cap B \subset E \subset A+B$ and for all $x\in E$ and $y\in A+B$, verifying
$$\forall t>0, K_t(x,A,B) \geq K_t(y,A,B),$$
then $y\in E$ and 
$$\Norm{y}_E \leq C \Norm{x}_E.$$
\end{definition}

The following fact is well-known. We prove it here in the context of quasi-Banach spaces for completion.

\begin{property} \label{prop:KmonToInter}
If $E$ is $K$-monotone with constant $C$ for the couple $(A,B)$ then $E$ is an interpolation space between $A$ and $B$ with the same constant.
\end{property}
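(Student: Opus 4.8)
The plan is to verify the defining property of an interpolation space directly, the key observation being that any admissible operator contracts the $K$-functional. So let $T\colon A+B\to A+B$ be bounded with $\restr{T}{A}$ a contraction from $A$ to $A$ and $\restr{T}{B}$ a contraction from $B$ to $B$. First I would fix $x\in A+B$ and $t>0$, take an arbitrary decomposition $x=y+z$ with $y\in A$, $z\in B$, and note that $Tx=Ty+Tz$ is again such a decomposition, with $\Norm{Ty}_A\leq\Norm{y}_A$ and $\Norm{Tz}_B\leq\Norm{z}_B$. Hence
$$K_t(Tx,A,B)\leq \Norm{Ty}_A+t\Norm{Tz}_B\leq \Norm{y}_A+t\Norm{z}_B,$$
and taking the infimum over all decompositions of $x$ gives $K_t(Tx,A,B)\leq K_t(x,A,B)$ for every $t>0$.

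Next I would invoke $K$-monotonicity. Since $E$ is $K$-monotone for $(A,B)$ we already have $A\cap B\subset E\subset A+B$, so for $x\in E$ the element $Tx$ lies in $A+B$ and, by the previous paragraph, satisfies $K_t(x,A,B)\geq K_t(Tx,A,B)$ for all $t>0$. Applying the definition of $K$-monotonicity with this $x$ and with $Tx$ in the role of $y$ yields $Tx\in E$ and $\Norm{Tx}_E\leq C\Norm{x}_E$. As $x\in E$ was arbitrary, $T$ is bounded from $E$ to $E$ with norm at most $C$, which (together with the inclusions $A\cap B\subset E\subset A+B$) is exactly the statement that $E$ is an interpolation space for $(A,B)$ with constant $C$.

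I do not expect a genuine obstacle here: the only points that require a word of care are that the contraction argument for the $K$-functional uses nothing beyond the definitions, and in particular no triangle inequality, so it is insensitive to the passage from the Banach to the quasi-Banach setting; and that the inclusions $A\cap B\subset E\subset A+B$ needed in the definition of an interpolation space are already part of the hypothesis of $K$-monotonicity. If one wants to also record that an \emph{exact} $K$-monotone space ($C=1$) is an exact interpolation space, the same computation gives it with no change.
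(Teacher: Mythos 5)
Your proof is correct and follows the same route as the paper: show that any admissible $T$ contracts the $K$-functional, then apply $K$-monotonicity to $x$ and $Tx$. The only cosmetic difference is that the paper cites the fact that $A+tB$ is an exact interpolation space to get $K_t(Tx)\leq K_t(x)$, whereas you unpack that one-line citation into the explicit decomposition argument.
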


\begin{proof}
Let $T: A+B \to A+B$ be a bounded operator such that its restriction to $A$ (resp. $B$) is a bounded operator with norm $1$ on $A$ (resp. $B$). Let $x\in E$ and let $y = Tx$, $y\in A+B$. Let $t>0$, $A+tB$ is an exact interpolation space for the couple $(A,B)$ so $\Norm{x}_{A+tB} \geq \Norm{y}_{A+tB}$. This means that for all $t>0$, $K_t(x,A,B) \geq K_t(y,A,B)$. Hence, by $K$-monotonicity of $E$, $y\in E$ and $\Norm{y}_E \leq C\Norm{x}_E$. So $T$ defines a bounded operator of norm less than $C$ on $E$.
\end{proof}

If reciprocally, every interpolation space of a couple $(A,B)$ is $K$-monotone then we say that $(A,B)$ is an \emph{Calder\'on couple}.

\subsection{Symmetric spaces}\label{sub:sym}

We start by introducing some notations. Measures will be, if not mentioned otherwise, denoted by $\nu$. If $(\Omega,\nu)$ is a measure space, denote by $L_0(\Omega,\nu)$ or simply $L_0(\Omega)$ (if no confusion can occur) the set of measurable functions $f$ on $\Omega$ such that $\nu\p{\set{\md{f}>t}}$ is finite for some $t\in\Rb$. To any $f\in L_0(\Omega)$, we associate a non-increasing rearrangement which is a function in $L_0(0,\nu(\Omega))$, denoted by $f^*$ or $\mu(f)$ and defined for $t\in (0,\nu(\Omega))$ by:
$$f^*(t) = \mu_t(f) = \inf \set{\Norm{f\Ind_A}_\infty : \nu(\Omega \backslash A) \geq t}.$$
A \emph{quasi-Banach symmetric space} on $\Omega$ is a nonzero subspace of $L_0(\Omega)$ which is rearrangement invariant (the quasi-norm of a function only depends on its distribution) and equipped with an increasing quasi-norm.  A symmetric space $E$ is said to have the {\it Fatou property} if for every increasing net $(f_n)_{n\in I}$ of elements of $E$ such that $(\Norm{f_n}_E)_{n\in I}$ is bounded and $f_n \uparrow f$ a.e., then $f\in E$ and $\Norm{f_n}_E \uparrow \Norm{f}_E$. An introduction to symmetric spaces can be found in \cite{KrePetSem82}.

For the remainder of the paper, fix $\alpha \in (0,\infty]$. Denote simply by $L_p$, $p\in (0,\infty]$ the space $L_p(0,\alpha)$ where $(0,\alpha)$ is equiped with the Lebesgue measure. A quasi-Banach symmetric space on $(0,\alpha)$ will be called a symmetric function space. Similarly, a symmetric sequence space is a quasi-Banach symmetric space on $\Nb$ endowed with the counting measure. Note that if $\nu(\Omega) \leq \alpha$, we can define a space $E(\Omega)$ by:
$$E(\Omega) = \set{f\in L_0(\Omega) : f^* \in E}\ \text{and}\ \forall f\in E(\Omega), \Norm{f}_{E(\Omega)} = \Norm{f^*}_E.$$

We end this subsection by a technical lemma that will be useful later on.

\begin{lemma}[H\"older type inequality for positive operators] \label{lem:holder}
Let $T$ be a positive operator on $L_0$. Then, for any $a,b\in L_0^+$ and $s,s'>0$ such that $1 = s^{-1} + s'^{-1}$, $$T(ab) \leq T(a^s)^{1/s}T(b^{s'})^{1/s'}.$$
\end{lemma}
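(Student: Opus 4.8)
The statement to prove is: for a positive operator $T$ on $L_0$, for all $a,b \in L_0^+$ and $s,s' > 0$ with $s^{-1} + s'^{-1} = 1$, one has $T(ab) \leq T(a^s)^{1/s} T(b^{s'})^{1/s'}$.

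\textbf{Proof plan.} The plan is to reduce the operator inequality to the scalar Young inequality applied pointwise, and then to optimize over a scaling parameter. Since $s^{-1}+s'^{-1}=1$ forces $s,s'\in(1,\infty)$, Young's inequality gives $uv\le u^{s}/s+v^{s'}/s'$ for all $u,v\ge 0$; substituting $u\mapsto\lambda u$ and $v\mapsto\lambda^{-1}v$ yields, for every $\lambda>0$,
\[
uv\;\le\;\frac{\lambda^{s}}{s}\,u^{s}+\frac{\lambda^{-s'}}{s'}\,v^{s'}.
\]
Evaluating this at $u=a$, $v=b$ gives the pointwise inequality $ab\le\frac{\lambda^{s}}{s}a^{s}+\frac{\lambda^{-s'}}{s'}b^{s'}$ in $L_0^{+}$, and since $T$ is linear and positive (hence order preserving), we obtain, for each fixed $\lambda>0$,
\[
T(ab)\;\le\;\frac{\lambda^{s}}{s}\,T(a^{s})+\frac{\lambda^{-s'}}{s'}\,T(b^{s'}).
\]

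Next I would take the infimum over $\lambda$. Fixing a point $\omega$ and writing $A=T(a^{s})(\omega)\ge 0$ and $B=T(b^{s'})(\omega)\ge 0$ (both finite a.e., as they lie in $L_0$), an elementary one-variable minimization shows
\[
\inf_{\lambda>0}\Big(\frac{\lambda^{s}}{s}A+\frac{\lambda^{-s'}}{s'}B\Big)=A^{1/s}B^{1/s'},
\]
the minimum being attained at $\lambda^{s+s'}=B/A$, with the identity $s+s'=ss'$ turning the resulting exponents into $1/s$ and $1/s'$; the degenerate cases $A=0$ or $B=0$ are covered by letting $\lambda\to\infty$ or $\lambda\to 0$. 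To upgrade this to an inequality between functions it suffices to restrict $\lambda$ to $\Qb_{>0}$: the displayed bound then holds simultaneously for all rational $\lambda$ outside a single null set, the right-hand side is measurable, and by continuity of $\lambda\mapsto\frac{\lambda^{s}}{s}A+\frac{\lambda^{-s'}}{s'}B$ the infimum over $\Qb_{>0}$ coincides with the infimum over all $\lambda>0$. This gives $T(ab)\le T(a^{s})^{1/s}T(b^{s'})^{1/s'}$ a.e., which is the claim.

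The only mildly delicate point is the bookkeeping at the very end: collecting the countably many exceptional null sets (one for each rational $\lambda$) into a single one, and interpreting the bound at points where $T(a^{s})$ or $T(b^{s'})$ vanishes. Neither of these presents a real obstacle; the entire substance of the argument is contained in the pointwise Young inequality together with the linearity and positivity of $T$.
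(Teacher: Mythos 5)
Your proposal is correct and is essentially the paper's own argument: both rest on the parametrized Young inequality $uv\le \frac{\lambda^{s}}{s}u^{s}+\frac{\lambda^{-s'}}{s'}v^{s'}$, the linearity and positivity of $T$, and the identity $\inf_{\lambda>0}\bigl(\frac{\lambda^{s}}{s}A+\frac{\lambda^{-s'}}{s'}B\bigr)=A^{1/s}B^{1/s'}$. The only difference is cosmetic — the paper starts from the right-hand side and pulls the infimum through $T$, while you apply $T$ first and then optimize, being slightly more explicit about the a.e.\ bookkeeping.
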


\begin{proof}
Recall that for any $x,y\in \Rb^+$,
$$xy = \inf_{\lambda>0} \frac{\lambda^sx^s}{s} + \frac{\lambda^{-s'}y^{s'}}{s'}.$$
Hence,
$$T(a^s)^{1/s}T(b^{s'})^{1/s'} = \inf_{\lambda>0} \frac{\lambda^sT(a^s)}{s} + \frac{\lambda^{-s'}T(b^{s'})}{s'} = 
\inf_{\lambda>0} T\p{\frac{\lambda^sa^s}{s} + \frac{\lambda^{-s'}b^{s'}}{s'}} \geq T(ab).$$
\end{proof}

\subsection{Interpolation of $L_p$-spaces}

The notion of left-$p$-monotonicity is almost equivalent to $K$-monotonicity for the couple $(L_p,L_\infty)$. Indeed, the $K$-functional of the couple $(L_p,L_{\infty})$ takes the following form (see \cite{Hol70}):
\[
K_t(x,L_p,L_{\infty}) \approx \p{\Int{0}{t^p} (x^*)^p}^{1/p}.\addtag\label{formula:KtLpLinfty}
\]
More generally, the $K$-functional of the couple $(L_p,L_q)$, $0<p<q<\infty$ has been computed up to constants depending on $p$ and $q$ and can be found in \cite{Hol70}. Let $r = (p^{-1} - q^{-1})^{-1}$. Then, for all $f$ in $L_p + L_q$ and $t>0$,
\[
K_t(f,L_p,L_q) \approx \p{\Int{0}{t^{r}} (f^*)^p}^{1/p} + t \p{\Int{t^{r}}{\infty} (f^*)^q}^{1/q}.\addtag \label{formula:KfuncLpLq}
\]
Combined with the following theorem, this allows, in many cases, to give a precise description of the interpolation spaces for the couple $(L_p,L_q)$ (\cite{LorShi71},\cite{Cwi81},\cite{Spa78}).

\begin{theorem}\label{thm:Kmon for Lp literature} 
Let $\alpha\in (0,\infty]$. The following couples of quasi-Banach spaces are Calder\'on couples:
\begin{itemize}
\item (Lorentz-Shimogaki) $p\geq 1$, $(L_p(0,\alpha),L_{\infty}(0,\alpha))$ and $(L_1(0,\alpha),L_p(0,\alpha))$,  
\item (Sparr) $p,q\geq 1$, $(L_p(\Omega),L_q(\Omega))$ for any $\sigma$-finite measure space $\Omega$.
\item (Sparr, Cwikel) $p,q\in (0,\infty)$, $(L_p(0,\alpha),L_q(0,\alpha))$,
\item (Cwikel) $p\in (0,\infty]$, $(\ell^p, \ell^{\infty})$.
\end{itemize}
\end{theorem}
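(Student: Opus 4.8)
The statement is a compilation of results from the literature, so the plan is to recall the mechanism common to all four items and to indicate which reference supplies each refinement. The starting point, shared by all the cited works, is the reduction of the Calder\'on-couple property to an \emph{orbit-transfer} statement: to show that every interpolation space for $(A,B)$ is $K$-monotone it suffices (and, conversely, is necessary) to prove that whenever $x,y\in A+B$ satisfy $K_t(y,A,B)\le K_t(x,A,B)$ for all $t>0$, there exists an operator $T$ bounded simultaneously on $A$ and on $B$, with norm controlled by an absolute constant, such that $Tx=y$ (or $Tx$ is as close to $y$ as we wish). Thus every item reduces to constructing such a $T$.

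First I would treat $(L_1(0,\alpha),L_\infty(0,\alpha))$, the Calder\'on--Mityagin theorem. Here $K_t(f,L_1,L_\infty)=\Int{0}{t} f^*$, so the hypothesis reads $\Int{0}{t} g^*\le\Int{0}{t} f^*$ for all $t$, i.e. $\md{f}$ left-majorizes $\md{g}$. The continuous Hardy--Littlewood--P\'olya theorem --- Ryff's theorem on orbits under doubly substochastic operators, the measure-theoretic counterpart of the Schur--Horn result recalled in Theorem \ref{thm:schurhorn} --- then produces a substochastic operator $D$ with $Df^*=g^*$; such $D$ is a contraction on both $L_1$ and $L_\infty$, and pre- and post-composing with the measure-preserving maps realizing $f\mapsto f^*$ and $g^*\mapsto g$ yields the desired $T$.

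Next I would pass to $(L_p,L_q)$ with $1\le p<q\le\infty$ (Lorentz--Shimogaki \cite{LorShi71} when $q=\infty$ or $p=1$, Sparr \cite{Spa78} in general, also over an arbitrary $\sigma$-finite $\Omega$) via the two-term formula \eqref{formula:KfuncLpLq}: with $r=(p^{-1}-q^{-1})^{-1}$, below the threshold $t^{r}$ the $K$-functional is governed by $\Int{}{}(f^*)^p$ and above it by $\Int{}{}(f^*)^q$. The idea is to split $(0,\alpha)$ at the relevant thresholds, apply the majorization step of the previous paragraph to $(f^*)^p$ versus $(g^*)^p$ on the lower part and to $(f^*)^q$ versus $(g^*)^q$ on the upper part, and then glue the resulting operators; Sparr's argument carries out exactly this bookkeeping, which is delicate precisely because the thresholds move with $t$.

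Finally, the genuinely quasi-Banach range $0<p<q<\infty$ on $(0,\alpha)$ and the couple $(\ell^p,\ell^\infty)$ (Cwikel \cite{Cwi81}) are not formally covered by the Banach arguments. One route is $p$-convexification: $L_p=(L_1)^{(1/p)}$ in Calder\'on's sense, and one checks that the construction $(\cdot)^{(\theta)}$ preserves the Calder\'on-couple property and transforms $K$-functionals as predicted by \eqref{formula:KfuncLpLq}; another is Cwikel's direct discretization of the transfer argument. I expect the sequence case to be the main obstacle: atoms carry fixed mass $1$, so the continuous majorization machinery must be replaced by its discrete version (Hardy--Littlewood--P\'olya on $\Rb^N$, hence again Theorem \ref{thm:schurhorn}), and the transferring operator has to be assembled on finite blocks and controlled uniformly --- this is the technical heart of \cite{Cwi81} and the reason the theorem is quoted rather than reproved here.
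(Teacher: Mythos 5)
The paper offers no proof of this theorem: it is stated purely as a compilation of results cited from \cite{LorShi71}, \cite{Spa78} and \cite{Cwi81}, so there is nothing to compare your argument against step by step. Your sketch --- the orbit-transfer reduction of the Calder\'on-couple property, the Calder\'on--Mityagin/Ryff substochastic-operator step for $(L_1,L_\infty)$, Sparr's threshold bookkeeping via the Holmstedt formula for $(L_p,L_q)$, and the discretization and convexification issues arising in the quasi-Banach and sequence cases --- is an accurate account of how those references proceed and correctly identifies why the theorem is quoted rather than reproved here.
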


The definition of left-$p$-monotonicity and right-$q$-monotonicity of a space $E$, contrary to $K$-monotonicity for a couple $(A,B)$, does not impose a condition of the type $A \cap B \subset E \subset A + B$. In the following lemma, we show that this condition is automatically verified.

\begin{lemma}\label{lem:inclusions}
Let $\infty > q\geq p>0$ and $C>0$. Let $E$ be a quasi-Banach symmetric function space. Then:
\begin{enumerate}
\item if $E$ is left-$p$-monotone and right-$q$-monotone, $L_p \cap L_q \subset E \subset L_p + L_q,$
\item if $E$ is left-$p$-monotone, $L_p \cap L_\infty \subset E \subset L_p + L_\infty$.
\end{enumerate}
\end{lemma}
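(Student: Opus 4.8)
The plan is to prove (1) first and then derive (2) as a limiting/degenerate case (or by an entirely parallel argument, replacing $q$ by $\infty$). The strategy is to produce, for each half of each inclusion, an explicit ``test function'' in $E$ that majorizes (in the appropriate left or right sense, after raising to the power $p$ or $q$) the function whose membership we want to conclude, and then invoke the defining monotonicity property of $E$. Since $E$ is a nonzero symmetric space, it contains \emph{some} nonzero function $h$, and by symmetry and the quasi-triangle inequality it contains all bounded functions supported on sets of finite measure that are ``comparable'' to $h$ — this is the seed we will majorize from.

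For the inclusion $L_p \cap L_q \subset E$: given $f \in L_p \cap L_q$, I want to show $f \in E$. The idea is that $f^*$, being in $L_p \cap L_q$ with $p \le q$, is dominated near $0$ by its $L_q$-integrability (so $f^*$ is ``not too spiky'') and near $\infty$ by its $L_p$-integrability. Concretely, one splits $f = f\Ind_{\{|f| > 1\}} + f\Ind_{\{|f| \le 1\}}$; the first piece lies in $L_q \cap L_\infty$-ish territory and the second in $L_p$. For the bounded, finitely-supported piece, direct comparison with a characteristic function $\lambda \Ind_{(0,s)} \in E$ via right-$q$-monotonicity (check $|\lambda \Ind_{(0,s)}|^q \triangleright |f\Ind_{\{|f|>1\}}|^q$ by a tail-integral computation) puts it in $E$. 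For the other piece, one uses left-$p$-monotonicity against a suitable element of $E$ obtained from the seed $h$ — here one may need to first enlarge the seed: iterating left-$p$-monotonicity against dilations/sums of translates of $h$ shows $E$ contains a function with arbitrarily large $L_p$-type tail, enough to left-$p$-majorize $|f\Ind_{\{|f|\le 1\}}|^p$. Then $f\in E$ by the quasi-triangle inequality. For the inclusion $E \subset L_p + L_q$: given $g \in E$, decompose $g = g\Ind_{\{|g|>1\}} + g\Ind_{\{|g|\le 1\}}$ again; one must show the first piece is in $L_q$ and the second in $L_p$. Here the monotonicity runs the other way — one shows that if $g^*$ failed to be $L_q$-integrable near $0$ (resp. $L_p$-integrable near $\infty$) then by right-$q$-monotonicity (resp. left-$p$-monotonicity) $E$ would contain functions of unbounded norm built from rescalings of $g$, contradicting... actually contradicting nothing directly, so instead one argues: since the $K$-functional formula \eqref{formula:KfuncLpLq} identifies $L_p+L_q$ with $\{f : K_t(f,L_p,L_q) < \infty \text{ for all } t\}$, and $K$-monotonicity-type reasoning (via the majorization hypotheses, which by the Hardy–Littlewood–Pólya-style equivalence correspond exactly to $K$-functional domination for $(L_p,L_\infty)$ and $(L_q,\cdot)$) forces $g$ to have finite $K$-functional, we get $g \in L_p + L_q$.

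The cleanest route, which I would actually write, is to notice that left-$p$-monotonicity says precisely: $K_t(\cdot,L_p,L_\infty)$-domination implies $E$-domination (using \eqref{formula:KtLpLinfty}, since $|f|^p \succ |g|^p$ is equivalent to $\int_0^t (f^*)^p \ge \int_0^t (g^*)^p$, i.e. to $K_{t^{1/p}}(f,L_p,L_\infty)^p \gtrsim K_{t^{1/p}}(g,L_p,L_\infty)^p$). So $E$ left-$p$-monotone is ``almost $K$-monotone for $(L_p,L_\infty)$'' except for the inclusion condition we are trying to prove. Dually, right-$q$-monotonicity corresponds to domination of the tail functional $t \mapsto \int_t^\infty (f^*)^q$. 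Using these two reformulations, (2) is: left-$p$-monotonicity $\Rightarrow L_p \cap L_\infty \subset E \subset L_p + L_\infty$, which one proves by (a) for $f \in L_p \cap L_\infty$, truncating and comparing $K$-functionals to a seed element of $E$ scaled up, and (b) for $g \in E$, showing $\sup_t K_t(g,L_p,L_\infty)/K_t(h,L_p,L_\infty) < \infty$ for the seed $h$ — if this supremum were infinite, a standard gliding-hump construction produces $g' \in L_p + L_\infty$ with $K_t(g',\cdot)$ still dominated by $K_t(g,\cdot)$ but $g'$ manifestly built to violate boundedness, forcing $g' \notin L_p + L_\infty$, a contradiction with $g' \in L_p$. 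Then (1) follows by applying the $(L_p,L_\infty)$ result and the analogous $(L_q,\cdot)$ result and intersecting.

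The main obstacle I anticipate is the direction $E \subset L_p + L_q$ (equivalently $E \subset L_p + L_\infty$ for part (2)): the other three inclusions are ``easy'' in that one is handed a concrete function and simply builds a majorant inside $E$, whereas here one must rule out pathological elements of $E$ with too-heavy tails or too-sharp singularities, and the only tool is the \emph{one-sided} monotonicity hypothesis plus the fact that $E \ne \{0\}$. The delicate point is that a single seed function $h \in E$ has some fixed rate of decay/singularity, and one must leverage the monotonicity (which lets one ``spread out'' or ``pile up'' $h$ while staying in $E$) to show that \emph{every} element of $E$ is controlled by what $L_p$ (or $L_q$) can see — in other words, that $E$ cannot contain a function strictly ``fatter at infinity'' than every $L_p$ function. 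I expect this to require a careful dilation/summation argument exploiting that $\md{h}^p$, suitably rearranged and summed over dyadic dilations, stays left-$p$-below a fixed multiple of itself, so that left-$p$-monotonicity keeps everything in $E$ with uniformly bounded norm, and then a Baire/uniform-boundedness-flavored contradiction if some $g \in E \setminus (L_p+L_q)$ existed.
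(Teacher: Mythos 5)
Your treatment of the first inclusion $L_p\cap L_q\subset E$ is essentially the paper's argument: split $f$ at height $1$, handle the piece $f\Ind_{\{|f|>1\}}$ by right-$q$-monotonicity against the constant function equal to its $L_q$-mean on its finite-measure support, and handle $f\Ind_{\{|f|\le 1\}}$ by left-$p$-monotonicity against $\Ind_{(0,\|f\Ind_{\{|f|\le1\}}\|_p^p)}$. The only ``seed'' needed is the standard fact that a nonzero symmetric space contains all indicators of finite-measure sets, so the iterated enlargement of a seed function you describe is unnecessary. (A small slip in your description: $f\Ind_{\{|f|>1\}}$ has finite-measure support but is unbounded, while $f\Ind_{\{|f|\le1\}}$ is bounded but not finitely supported; it is the first, unbounded piece that goes through right-$q$-monotonicity.)

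The genuine gap is the reverse inclusion $E\subset L_p+L_q$, which you rightly flag as the obstacle but do not resolve. The argument you start and then abandon (``$E$ would contain functions of unbounded norm built from $g$ \dots contradicting nothing directly'') is in fact the correct one and does produce a direct contradiction; no $K$-functional reformulation, gliding hump, or Baire-category device is needed. Concretely, one proves the contrapositive: if $f\notin L_p+L_q$, split $f$ at height $1$ as before; then one of the two pieces left-$p$-majorizes, or right-$q$-majorizes, $a\,\Ind_{(0,\min(1,\alpha))}$ for \emph{every} $a>0$ (because the relevant integral $\int_0^t$ of the $p$-th power, or $\int_t^\infty$ of the $q$-th power, of its rearrangement is infinite). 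If $f$ were in $E$, so would that piece be, and the monotonicity hypothesis would force $a^{1/p}\Norm{\Ind_{(0,\min(1,\alpha))}}_E\le C\Norm{f}_E$ (or the analogous bound with $1/q$) for all $a$, which is absurd by homogeneity of the quasi-norm; hence $\Norm{f}_E=\infty$. Note also that you have the roles of the exponents reversed in this direction: for $p\le q$, membership in $L_p+L_q$ requires $L_p$-integrability of $g^*$ near $0$ (the regime controlled by left-$p$-monotonicity, which governs $\int_0^t$) and $L_q$-integrability of the tail (controlled by right-$q$-monotonicity, which governs $\int_t^\infty$), not ``$L_q$ near $0$ and $L_p$ near $\infty$'' as written. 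As it stands, the proposal does not contain a proof of $E\subset L_p+L_q$, nor of the corresponding inclusion in part (2).
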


\begin{proof}
We only prove (1) since (2) can be obtained by similar arguments. Let us prove the first inclusion. Since $E$ is a symmetric function space, it contains  characteristic functions of sets of finite measure (see \cite{Dir11}). Let $f\in (L_p \cap L_q)^+$. Decompose $f$ into $f_1 = f\Ind_{\set{f>1}}$ and $f_2 = f\Ind_{\set{f\leq 1}}$. Set,
$$h_1 = \frac{\Norm{f_1}_q}{\nu(\set{f>1})^{1/q}}\Ind_{\set{f>1}}\ \text{and}\ h_2 = \Ind_{(0,\Norm{f_2}_p^p)}.$$
Since $h_1^q$ is the mean of $f_1^q$ (on the support of $f_1$), $h_1^q \triangleright f_1^q$. Hence, $f_1\in E$.
Now let us show that $h_2^p \succ f_2^p$. If $t< \Norm{f_2}_p^p$,
$$\int_0^t h_2^p = \int_0^t 1 \geq \int_0^t (f_2^*)^p,$$
and if $t\geq \Norm{f_2}_p^p$,
$$\int_0^t h_2^p = \Norm{f_2}_p^p = \int_0^\alpha (f_2^*)^p \geq \int_0^t (f_2^*)^p.$$
Consequently, $f_2\in E$. So $f$ belongs to $E$.

Let us now prove the second inclusion. Let $f\notin L_p + L_q$. Decompose $f$ into $f_1$ and $f_ 2$ as above. By assumption, $f_1 \notin L_p + L_q$ or $f_2 \notin L_p + L_q$. 

\emph{case 1: $f_1 \notin L_p + L_q$.} Then $f_1$ is not in $L_p$, this means that $f_1^p \succ a\Ind_{[0,\min(1,\alpha)]}$ for all $a >0$, hence $\Norm{f_1}_E = \infty$, $f_1 \notin E$. 

\emph{case 2: $f_2 \notin L_p + L_q$.} Similarly, $f_2 \notin L_q$ so $f_2^q \triangleright a\Ind_{[0,\min(1,\alpha)]}$ for all $a>0$ so $f_2 \notin E$.
\end{proof}

\subsection{Boyd indices}\label{subsection Boyd}

Boyd indices play an important role in describing the boundedness properties of classical operators on symmetric spaces (\cite{Boy69}, \citep{KalMon03}).
They intervene in our study as a handy tool to connect some properties of quasi-Banach symmetric spaces to interpolation.
Let $E$ be a symmetric function space on $(0,\alpha)$. 

\begin{definition}
Let $t>0$. We denote by $D_t$ the dilation operator on $E$ given by:
$$\begin{array}{ccccc}
D_t & : & E & \to & E \\
 & & f & \mapsto & [s \mapsto f(s/t)] \\
\end{array},$$
where by convention $f(s) = 0$ if $s\geq \alpha$. The Boyd indices of $E$ are given by:
$$
\alpha_E = \lim_{t\to 0} \dfrac{\log \Norm{D_t}_{E\to E}}{\log t} = \sup_{t<1} \dfrac{\log \Norm{D_t}_{E\to E}}{\log t},
$$
and
$$
\beta_E = \lim_{t\to \infty} \dfrac{\log \Norm{D_t}_{E\to E}}{\log t} = \inf_{t>1} \dfrac{\log \Norm{D_t}_{E\to E}}{\log t}.
$$
\end{definition}

\begin{remark}
{\rm Since $\alpha_E$ is defined as a supremum, if $\alpha_E=0$ then $\Norm{D_t}_{E\to E} = 1$ for all $t\in (0,1]$.}
\end{remark}

\begin{remark}\label{rem:beta}
{\rm Since $E$ is a quasi-Banach space, $\beta_E < \infty$.}
\end{remark}

\begin{proof}
By definition, there exists a constant $C>0$ such that for any $f,g\in E$, $\Norm{f+g}_E \leq C(\Norm{f}_E + \Norm{g}_E)$. Consequently, $\Norm{D_{2}}_{E\to E} \leq 2C$ and $\beta_E \leq \frac{\log 2C}{\log 2}$.
\end{proof}

For any $p,q\in (0,\infty)$, consider the map:
$$\begin{array}{ccccc}
H^{(p)} & : & E & \to & E \\
 & & f & \mapsto & \left[ s \mapsto \p{\frac{1}{s}\int_0^s (f^*)^p}^{1/p} \right] \\
\end{array},$$
and:
$$\begin{array}{ccccc}
H_{(q)} & : & E & \to & E \\
 & & f & \mapsto & \left[ s \mapsto \p{\frac{1}{s}\int_s^\infty (f^*)^q}^{1/q} \right] \\
\end{array}.$$
The following lemma is a particular case of \cite[Theorem 2]{Mon96}.
\begin{lemma}\label{lem : Montgomery}
Let $E$ be a symmetric function space such that $p < \frac{1}{\beta_E}$ and $q > \frac{1}{\alpha_E}$. Then $H^{(p)}$ and $H_{(q)}$ are bounded on $E$.
\end{lemma}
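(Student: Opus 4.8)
The statement is asserted to be a particular case of \cite[Theorem 2]{Mon96}, so the plan is to reduce the boundedness of $H^{(p)}$ and $H_{(q)}$ to the known behaviour of the dilation operators $D_t$ encoded in the Boyd indices, and then either cite Montgomery-Smith directly or reproduce the short argument. I will treat $H^{(p)}$ first; the case of $H_{(q)}$ is dual and handled by the same scheme with $\alpha_E$ and the tail integral in place of $\beta_E$ and the initial-segment integral.

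\emph{Step 1: reduce to a rearrangement-invariant estimate.} Since $(H^{(p)}f)(s)$ depends only on $f^*$ and is already nonincreasing in $s$, it suffices to bound $\Norm{H^{(p)}f}_E$ by $\Norm{f}_E$ for $f = f^* \geq 0$. Write $g = f^p$, so $g$ is nonincreasing and $(H^{(p)}f)(s) = \big(\tfrac1s\int_0^s g\big)^{1/p}$. The natural move is to pass to the $p$-convexified space $E^{(p)}$ (the space with quasi-norm $\Norm{h}_{E^{(p)}} = \Norm{|h|^{1/p}}_E^p$, mentioned in the introduction via ``$p$-convexifications''); there $\beta_{E^{(p)}} = p\,\beta_E < 1$, and boundedness of $H^{(p)}$ on $E$ is equivalent to boundedness of the Hardy averaging operator $g \mapsto [s\mapsto \tfrac1s\int_0^s g]$ on $E^{(p)}$. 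So the whole lemma reduces to: \emph{if $F$ is a symmetric function space with $\beta_F < 1$, the Hardy operator $P$ is bounded on $F$}; and dually, if $\alpha_F > 0$, the conjugate Hardy operator $Q g(s) = \int_s^\infty g(u)\,\tfrac{du}{u}$ is bounded on $F$ (the $q$-version comes out with $\alpha_{F} = q\,\alpha_E > 0$).

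\emph{Step 2: dilation decomposition of the Hardy operator.} For $g = g^* \geq 0$ I would dyadically decompose: for $s>0$, $\tfrac1s\int_0^s g \le \sum_{k\ge 0} \tfrac1s \int_{2^{-k-1}s}^{2^{-k}s} g \le \sum_{k\ge 0} 2^{-k} g(2^{-k-1}s)$, because $g$ is nonincreasing on each dyadic block. In operator form $Pg \leq \sum_{k\ge 0} 2^{-k} D_{2^{k+1}} g$ pointwise (again using monotonicity of $g$ so that $D_{2^{k+1}}g$ dominates the relevant averages). Now apply the quasi-triangle inequality in $F$; one must be slightly careful since in a genuine quasi-Banach space the triangle inequality for infinite sums has a loss, so I would first use the Aoki--Rolewicz theorem to pass to an equivalent $\rho$-norm ($\rho \le 1$) and sum $\big(\sum_k (2^{-k}\Norm{D_{2^{k+1}}}_{F\to F})^\rho\big)^{1/\rho}$. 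By the definition of $\beta_F$, for $\beta_F < \theta < 1$ we have $\Norm{D_{2^{k+1}}}_{F\to F} \lesssim 2^{\theta(k+1)}$, so the geometric series $\sum_k 2^{-k\rho}2^{\theta k\rho}$ converges (choosing $\rho$ small enough that $\theta < 1$ still gives $2^{(\theta-1)\rho}<1$, which it does for any $\rho>0$). This yields $\Norm{Pg}_F \lesssim \Norm{g}_F$. The conjugate operator $Q$ is handled identically: $Qg(s) = \int_s^\infty g(u)\tfrac{du}{u} \le \sum_{k\ge 0} \int_{2^k s}^{2^{k+1}s} g(u)\tfrac{du}{u} \le (\log 2)\sum_{k\ge 0} g(2^k s)$, i.e. $Qg \lesssim \sum_{k\ge0} D_{2^{-k}} g$, and $\Norm{D_{2^{-k}}}_{F\to F}\lesssim 2^{-\theta' k}$ for $0 < \theta' < \alpha_F$ makes the series converge.

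\emph{Main obstacle.} The genuinely delicate point is the quasi-Banach bookkeeping: in a Banach space Step 2 is a one-line triangle-inequality argument, but with only a quasi-norm one cannot naively sum infinitely many terms. The fix --- invoking Aoki--Rolewicz to get an $r$-subadditive equivalent quasi-norm and then summing in that metric --- is standard but needs to be stated carefully, and one should check that the implied constants depend only on $p$, $q$, the modulus of concavity of $E$, and the gaps $\tfrac1{\beta_E} - p$ and $q - \tfrac1{\alpha_E}$, so that the conclusion ``bounded on $E$'' is clean. The only other mild subtlety is justifying the pointwise domination $Pg \le \sum_k 2^{-k} D_{2^{k+1}}g$ for a nonincreasing $g$, together with checking that the monotone rearrangement of each $D_{2^{k+1}}g$ is again $D_{2^{k+1}}g$ (true since $g$ is already nonincreasing), which is what lets us move the sum inside the symmetric quasi-norm. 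Given that \cite[Theorem 2]{Mon96} already records exactly this, the cleanest writeup is simply to cite it and remark that the passage from $H^{(p)}, H_{(q)}$ to the Hardy operators on the $p$- (resp. $q$-) convexification is the content of Step 1.
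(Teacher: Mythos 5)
Your treatment of $H^{(p)}$ is sound and is essentially the standard dilation argument behind \cite[Theorem 2]{Mon96}; note that the paper itself gives no proof of this lemma, it only cites that theorem and records in a remark that Montgomery-Smith's proof does not use the Fatou property, so you are reconstructing an argument the paper deliberately omits. Two small points there: with the paper's definition of $E^{(r)}$, the space on which the Hardy averaging operator should act is the one with quasi-norm $g\mapsto\Norm{\md{g}^{1/p}}_E$ (a $p$-\emph{concavification}, not $E^{(p)}$), for which the dilation norms are the $p$-th powers of those on $E$ and hence the upper Boyd index is $p\beta_E<1$; and the infinite sum $\sum_k 2^{-k}D_{2^{k+1}}g$ should be justified as the limit in the space of its partial sums, which are Cauchy for the Aoki--Rolewicz $\rho$-norm, rather than by any Fatou-type passage to the limit (which is not available here).

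The $H_{(q)}$ half, however, contains a genuine error. The operator you must bound is $g\mapsto\bigl[s\mapsto\frac1s\int_s^\infty g\bigr]$, not the conjugate Hardy operator $Qg(s)=\int_s^\infty g(u)\,\frac{du}{u}$; since $\frac1s\geq\frac1u$ on the domain of integration, the former dominates the latter pointwise on positive functions, so boundedness of $Q$ gives nothing. Correspondingly your threshold is wrong: $Q$ is bounded as soon as $\alpha_F>0$, which in your reduction reads $q\alpha_E>0$ and is automatic --- the hypothesis $q>1/\alpha_E$ is never used, which is a sure sign something is off. The correct decomposition for nonincreasing $g$ is $\frac1s\int_s^\infty g\leq\sum_{k\geq0}2^k g(2^k s)=\sum_{k\geq0}2^k(D_{2^{-k}}g)(s)$, and the series $\sum_k 2^{k\rho}\Norm{D_{2^{-k}}}_{F\to F}^{\rho}$ converges only if $\Norm{D_{2^{-k}}}_{F\to F}\les 2^{-\theta' k}$ for some $\theta'>1$, i.e.\ only if $\alpha_F>1$; this is exactly where $q\alpha_E>1$, that is $q>1/\alpha_E$, must enter. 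With that correction the second half goes through by the same Aoki--Rolewicz bookkeeping as the first.
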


\begin{remark}
In \cite{Mon96}, contrary to the definition we took, symmetric spaces are automatically assumed to verify some version of the Fatou property. However, the proof of Lemma \ref{lem : Montgomery} given in \cite{Mon96} does not use this hypothesis so we do not reproduce it here.
\end{remark}

As a consequence of Lemma \ref{lem : Montgomery}, we obtain an interpolation result which can already be found in S. Dirksen's PhD thesis \cite{Dir11}.

\begin{property}\label{prop:boydintinfty}
Assume that $p < \frac{1}{\beta_E}$. Then, $E$ is an interpolation space for the couple $(L_p,L_{\infty})$.
\end{property}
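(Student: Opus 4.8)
The plan is to derive the proposition from Lemma~\ref{lem : Montgomery}: since $p<1/\beta_E$, that lemma tells us the operator $H^{(p)}$ is bounded on $E$, and this boundedness, together with the formula \eqref{formula:KtLpLinfty} for the $K$-functional of $(L_p,L_\infty)$, is all we need. By Proposition~\ref{prop:KmonToInter} it suffices to prove that $E$ is $K$-monotone for $(L_p,L_\infty)$, i.e. that $L_p\cap L_\infty\subseteq E\subseteq L_p+L_\infty$ and that domination of $K$-functionals is inherited by membership in $E$.

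The core of the argument is this last point. Suppose $f\in E$, $g\in L_p+L_\infty$ and $K_t(g,L_p,L_\infty)\le K_t(f,L_p,L_\infty)$ for every $t>0$. By \eqref{formula:KtLpLinfty}, after the substitution $s=t^p$, this reads $\int_0^s (g^*)^p\les \int_0^s (f^*)^p$ for all $s>0$, with an implicit constant depending only on $p$. Since $(g^*)^p$ is nonincreasing, $g^*(s)^p\le \tfrac1s\int_0^s (g^*)^p\les \tfrac1s\int_0^s(f^*)^p=(H^{(p)}f)(s)^p$, so $g^*\les H^{(p)}f$ pointwise. Now $H^{(p)}f$ is nonincreasing, hence equal to its own decreasing rearrangement, and it belongs to $E$ because $H^{(p)}$ is bounded on $E$; the order-ideal property built into the notion of a symmetric quasi-norm then gives $g\in E$ with $\Norm{g}_E\les \Norm{H^{(p)}f}_E\le \Norm{H^{(p)}}_{E\to E}\Norm{f}_E$. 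This is exactly $K$-monotonicity of $E$, with constant $\les \Norm{H^{(p)}}_{E\to E}$, once the two inclusions are in hand.

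Both inclusions again come from the boundedness of $H^{(p)}$. For $E\subseteq L_p+L_\infty$: if $f\in E$ then $H^{(p)}f\in E\subseteq L_0$ is finite a.e.; since $f^*$ is nonincreasing it is bounded on every interval $(\delta,\alpha)$ with $\delta>0$, so $\int_0^s(f^*)^p=\infty$ for some $s$ would force this integral to be infinite for all $s$, making $H^{(p)}f\equiv\infty$, a contradiction. Hence $\int_0^s(f^*)^p<\infty$ for all $s<\alpha$, and (also when $\alpha<\infty$, using boundedness of $f^*$ near $\alpha$) this means precisely $f\in L_p+L_\infty$. For $L_p\cap L_\infty\subseteq E$: take $0\le f=f^*\in L_p\cap L_\infty$ with $\Norm{f}_\infty,\Norm{f}_p>0$, set $a=(\Norm{f}_p/\Norm{f}_\infty)^p$ (an admissible length, since $\Norm{f}_p^p\le\alpha\Norm{f}_\infty^p$ when $\alpha<\infty$), and note $\Ind_{(0,a)}\in E$ because symmetric spaces contain indicators of finite-measure sets. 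A direct computation gives $H^{(p)}(\Ind_{(0,a)})(s)=\min\big(1,(a/s)^{1/p}\big)$, and from $f^*(s)\le\Norm{f}_\infty$ together with $f^*(s)^p\le\tfrac1s\int_0^s(f^*)^p\le\Norm{f}_p^p/s$ one checks $f^*\le\Norm{f}_\infty\,H^{(p)}(\Ind_{(0,a)})$ pointwise; since the right-hand side lies in $E$, the ideal property yields $f\in E$.

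There is no deep obstacle here: all the real content sits in Lemma~\ref{lem : Montgomery}, and the rest is bookkeeping. The points that deserve a little care are (i) checking that $a=(\Norm{f}_p/\Norm{f}_\infty)^p\le\alpha$ in the case $\alpha<\infty$; (ii) the elementary monotonicity dichotomy used to get $E\subseteq L_p+L_\infty$, in particular handling $\alpha=\infty$ and $\alpha<\infty$ uniformly; and (iii) making explicit that ``equipped with an increasing quasi-norm'' supplies the order-ideal property invoked twice above, so that a pointwise bound by a nonincreasing element of $E$ places a function in $E$ with a controlled quasi-norm. Proposition~\ref{prop:KmonToInter} then upgrades the $K$-monotonicity of $E$ to the statement that $E$ is an interpolation space for $(L_p,L_\infty)$, with interpolation constant $\les\Norm{H^{(p)}}_{E\to E}$.
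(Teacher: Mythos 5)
Your proof is correct and follows essentially the same route as the paper's: reduce to $K$-monotonicity via Proposition~\ref{prop:KmonToInter}, translate $K$-functional domination into $\int_0^s (g^*)^p \les \int_0^s (f^*)^p$ via \eqref{formula:KtLpLinfty}, and bound $g^* \les H^{(p)}f \in E$ using Lemma~\ref{lem : Montgomery}. The only difference is that you spell out the inclusions $L_p\cap L_\infty\subseteq E\subseteq L_p+L_\infty$ in detail, which the paper dispatches in one clause; your verifications are sound.
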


\begin{proof}
By Lemma \ref{lem : Montgomery}, $H^{(p)}$ is well-defined and bounded from $E$ to $E$, in particular $E \subset L_p + L_{\infty}$. Let $f\in E$ and $g\in L_p + L_{\infty}$ and assume that $f^p \succ g^p$. This means that $H^{(p)}(f) \geq H^{(p)}(g) \geq g^*$. Since $H^{(p)}(f)$ belongs to $E$ this means that $g$ belongs to $E$ and that:
$$\Norm{g}_E \leq \Norm{H^{(p)}(f)}_E \les \Norm{f}_E.$$
Hence, $E$ is $K$-monotone for the couple $(L_p,L_{\infty})$. This concludes the proof by Proposition \ref{prop:KmonToInter}.
\end{proof}

Combining the previous proof and Remark \ref{rem:beta}, we can obtain the following.

\begin{corollary}\label{cor:allpmon}
There exists $p>0$ such that $E$ is is left-$p$-monotone.
\end{corollary}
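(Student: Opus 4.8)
The plan is to leverage Proposition \ref{prop:boydintinfty} together with the control on the Boyd index $\beta_E$ coming from Remark \ref{rem:beta}. Recall that the proof of Remark \ref{rem:beta} gives a concrete bound: since $E$ is a quasi-Banach space with modulus of concavity $C$, we have $\Norm{D_2}_{E\to E}\leq 2C$, and since $\beta_E$ is an infimum over $t>1$ of $\log\Norm{D_t}_{E\to E}/\log t$, this yields $\beta_E\leq \frac{\log(2C)}{\log 2}<\infty$. The only subtlety is that this bound could be large, so I cannot simply take $p=1$; instead I pick any $p>0$ small enough that $p<\frac{1}{\beta_E}$, which is possible precisely because $\beta_E$ is finite.

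First I would record that $\beta_E<\infty$ by Remark \ref{rem:beta}. Then I would choose $p\in(0,\infty)$ with $p\beta_E<1$, i.e. $p<1/\beta_E$ (interpreting $1/\beta_E=+\infty$ when $\beta_E=0$, in which case any $p$ works). With this choice of $p$, Proposition \ref{prop:boydintinfty} applies and tells us that $E$ is an interpolation space for the couple $(L_p,L_\infty)$. It remains to observe that being an interpolation space for $(L_p,L_\infty)$ forces left-$p$-monotonicity. This last step is essentially contained in the proof of Proposition \ref{prop:boydintinfty}: the boundedness of $H^{(p)}$ on $E$ gives $E\subset L_p+L_\infty$, and if $f\in E$, $g\in L_0$ with $|f|^p\succ|g|^p$, then $g^*\leq H^{(p)}(g)\leq H^{(p)}(f)\in E$, whence $g\in E$ with $\Norm{g}_E\leq\Norm{H^{(p)}(f)}_E\lesssim\Norm{f}_E$. (Here I should make sure that if $g\notin L_p+L_\infty$ then the majorization $|f|^p\succ|g|^p$ cannot hold with $f\in E$, which follows as in Lemma \ref{lem:inclusions}(2).)

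In short, the corollary is an immediate repackaging: combine Remark \ref{rem:beta} (finiteness of $\beta_E$) with Proposition \ref{prop:boydintinfty} (applied to a sufficiently small $p$) and the observation — already made inside that proposition's proof — that its argument in fact establishes left-$p$-monotonicity, not merely the interpolation property. I do not anticipate a genuine obstacle here; the only point requiring a moment's care is the bookkeeping around the case $\beta_E=0$ and making sure the chain $g^*\leq H^{(p)}(g)\leq H^{(p)}(f)$ is legitimate, which is exactly the content of the definition of $H^{(p)}$ and of left-majorization. Thus the proof is just a couple of lines: pick $p<1/\beta_E$, apply Proposition \ref{prop:boydintinfty}, and read off left-$p$-monotonicity from its proof.
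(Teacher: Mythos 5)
Your proposal is correct and matches the paper's own argument, which likewise obtains the corollary by combining Remark \ref{rem:beta} (finiteness of $\beta_E$) with the proof of Proposition \ref{prop:boydintinfty}: choosing $p<1/\beta_E$, the boundedness of $H^{(p)}$ yields $g^*\leq H^{(p)}(g)\leq H^{(p)}(f)$ and hence left-$p$-monotonicity. Your extra remarks on the case $\beta_E=0$ and on why $g$ automatically lies in $L_p+L_\infty$ are harmless bookkeeping and do not change the route.
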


Recall also the following classical result \cite[Theorem 3]{Mon96}. Once again, the proof given in \cite{Mon96} applies without modification to our context since it only uses the boundedness of $H^{(p)}$ and $H_{(q)}$ combined with \eqref{formula:KfuncLpLq}.

\begin{property}\label{prop:boydint}
Assume that $0 < p < \frac{1}{\beta_E} \leq \frac{1}{\alpha_E} < q \leq \infty$. 
Then, $E$ is an interpolation space for the couple $(L_p,L_q)$.
\end{property}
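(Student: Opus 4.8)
The plan is to mimic the proof of Proposition \ref{prop:boydintinfty}, but using the full $K$-functional formula \eqref{formula:KfuncLpLq} for the couple $(L_p,L_q)$ instead of the simpler formula \eqref{formula:KtLpLinfty}. By Proposition \ref{prop:KmonToInter} it suffices to show that $E$ is $K$-monotone for $(L_p,L_q)$, so I first need to establish the inclusion $L_p \cap L_q \subset E \subset L_p + L_q$, and then verify the monotonicity condition.

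For the inclusions: since $p < 1/\beta_E$ and $q > 1/\alpha_E$, Lemma \ref{lem : Montgomery} gives that both $H^{(p)}$ and $H_{(q)}$ are bounded on $E$. Boundedness of $H^{(p)}$ forces $E \subset L_p + L_\infty$ (as in the proof of Proposition \ref{prop:boydintinfty}, since $H^{(p)}(f)\in E$ controls the "large values" part of $f$), and boundedness of $H_{(q)}$ similarly controls the "small values / tail" part; together with the fact that $E$ contains all characteristic functions of sets of finite measure, one gets $E \subset L_p + L_q$ and $L_p \cap L_q \subset E$. (Alternatively, since $p < 1/\beta_E$ implies $E$ is left-$p$-monotone by the argument of Proposition \ref{prop:boydintinfty}, and one can symmetrically check $E$ is right-$q$-monotone, Lemma \ref{lem:inclusions}(1) gives the inclusions directly; I would use whichever is cleaner.)

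For the $K$-monotonicity: let $f\in E$ and $g\in L_p+L_q$ with $K_t(g,L_p,L_q)\leq K_t(f,L_p,L_q)$ for all $t>0$. Writing $r=(p^{-1}-q^{-1})^{-1}$ and using \eqref{formula:KfuncLpLq}, this says, up to constants,
$$
\p{\Int{0}{t^r}(g^*)^p}^{1/p} + t\p{\Int{t^r}{\infty}(g^*)^q}^{1/q} \les \p{\Int{0}{t^r}(f^*)^p}^{1/p} + t\p{\Int{t^r}{\infty}(f^*)^q}^{1/q}
$$
for all $t>0$. I want to deduce $g^*(s) \les \p{H^{(p)}f(s) + H_{(q)}f(s)}$, or something of that shape, pointwise in $s$, which together with boundedness of $H^{(p)}$ and $H_{(q)}$ on $E$ and the symmetry of $E$ would give $g\in E$ with $\Norm{g}_E \les \Norm{f}_E$. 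Concretely: taking $t = s^{1/r}$ in the displayed inequality bounds the two pieces of $g$ at scale $s$ by $K_{s^{1/r}}(f)$, and $K_{s^{1/r}}(f,L_p,L_q) \les s^{1/p}H^{(p)}f(s^{1/r}\cdot \text{something})$... — here one has to be careful matching the scales $t^r$ and $s$. The cleanest route is: from $K_t(g)\les K_t(f)$ and the formula, each of $\p{\int_0^{t^r}(g^*)^p}^{1/p}$ and $t\p{\int_{t^r}^\infty(g^*)^q}^{1/q}$ is $\les K_t(f)$; specializing and using $(g^*(t^r))^p \leq \frac{1}{t^r}\int_0^{t^r}(g^*)^p$ one gets $g^*(t^r) \les t^{-r/p}K_t(f)$, and then one checks $t^{-r/p}K_t(f) \les H^{(p)}f(ct^r) + H_{(q)}f(ct^r)$ for an absolute constant $c$, using \eqref{formula:KfuncLpLq} once more together with the observation that $\int_{t^r}^\infty (f^*)^q \leq \int_{ct^r}^\infty(f^*)^q$-type monotonicity is not quite what is needed — rather $\p{\int_{t^r}^\infty (f^*)^q}^{1/q} \les (t^r)^{1/q}\cdot\frac{1}{?}$... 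I would phrase this via $K_t(f) \approx t^{1/p}\Norm{f}_{L_p+t^{-r}\cdot\,}$ scaling relations to avoid index bookkeeping. Substituting $u = ct^r$ then yields $g^*(u) \les H^{(p)}f(u) + H_{(q)}f(u)$ for all $u>0$, whence $\Norm{g}_E = \Norm{g^*}_E \les \Norm{H^{(p)}f}_E + \Norm{H_{(q)}f}_E \les \Norm{f}_E$.

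The main obstacle I anticipate is purely the scaling/index bookkeeping in the last step: converting the two-term $K$-functional inequality into a clean pointwise domination $g^* \les H^{(p)}f + H_{(q)}f$, making sure the implicit constants depend only on $p$ and $q$ and that the changes of variable between the parameter $t$ in the $K$-functional and the parameter $s$ in $H^{(p)}, H_{(q)}$ are handled consistently. Once that pointwise bound is in hand, the conclusion is immediate from Lemma \ref{lem : Montgomery} and Proposition \ref{prop:KmonToInter}; there is no conceptual difficulty beyond what already appears in Proposition \ref{prop:boydintinfty}. This is exactly why the paper can say the proof of \cite[Theorem 3]{Mon96} "applies without modification," and I would simply cite that once the setup (quasi-Banach, boundedness of $H^{(p)}$ and $H_{(q)}$, formula \eqref{formula:KfuncLpLq}) is checked to go through.
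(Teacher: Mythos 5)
Your proposal is correct and follows essentially the same route as the paper, which simply observes that the proof of \cite[Theorem 3]{Mon96} goes through verbatim because it only uses the boundedness of $H^{(p)}$ and $H_{(q)}$ (Lemma \ref{lem : Montgomery}) together with the Holmstedt formula \eqref{formula:KfuncLpLq}. The bookkeeping you were worried about in fact closes cleanly: since $\tfrac{r}{p}-1=\tfrac{r}{q}$, one has $t^{-r/p}K_t(f,L_p,L_q)\approx H^{(p)}f(t^r)+H_{(q)}f(t^r)$, so $g^*(s)\les H^{(p)}f(s)+H_{(q)}f(s)$ with $s=t^r$ and no dilation constant is needed.
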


The next lemma will be needed in the last section of the paper. We follow \cite[Lemma 7.2]{Ast09} but we provide a more precise statement.

\begin{lemma}\label{lem:alpha=0}
Suppose that $\alpha_E = 0$. Let $\varphi \in L_{\infty}(0,1)$ consider the map:
$$\begin{array}{ccccc}
T_\varphi & : & E & \to & E((0,1) \times (0,\alpha)) \\
 & & f & \mapsto & [(s,t) \to \varphi(s)f(t)] \\
\end{array}.$$
Then, $\normop{T_\varphi} = \Norm{\varphi}_{\infty}$.
\end{lemma}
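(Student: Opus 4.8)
The plan is to prove the two inequalities $\normop{T_\varphi} \leq \Norm{\varphi}_\infty$ and $\normop{T_\varphi} \geq \Norm{\varphi}_\infty$ separately, the first being elementary and the second being where the hypothesis $\alpha_E = 0$ is genuinely used.

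For the upper bound, I would argue that $T_\varphi$ factors as the composition of multiplication by $\varphi$ with a map that does not increase the $E$-quasi-norm. Precisely, if $\Norm{\varphi}_\infty = c$, then for any $f \in E$ the function $(s,t)\mapsto \varphi(s)f(t)$ is, up to the scalar $c$, distributed like a function dominated in the sense of rearrangements by something controlled by $f$. More carefully: $|\varphi(s)f(t)| \leq c|f(t)|$, and the map $f \mapsto [(s,t)\mapsto f(t)]$ is an isometry from $E$ into $E((0,1)\times(0,\alpha))$ since $[(s,t)\mapsto f(t)]$ has the same decreasing rearrangement as $f$ (because $(0,1)$ has measure $1$ and the rearrangement of a ``tensor with a constant'' over a probability space is unchanged). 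Since $E((0,1)\times(0,\alpha))$ is a symmetric space and its quasi-norm is increasing, $\Norm{T_\varphi f}_{E((0,1)\times(0,\alpha))} = \Norm{(s,t)\mapsto\varphi(s)f(t)}\leq \Norm{(s,t)\mapsto c f(t)} = c\Norm{f}_E$. This gives $\normop{T_\varphi}\leq \Norm{\varphi}_\infty$ with no hypothesis on $\alpha_E$.

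For the lower bound, fix $\e > 0$ and let $A = \set{s \in (0,1): |\varphi(s)| > \Norm{\varphi}_\infty - \e}$, a set of positive measure, say $|A| = \delta > 0$. Given $f \in E$, consider the test element $f$ itself, or rather estimate $\Norm{T_\varphi f}$ from below by restricting to $A$ in the $s$-variable: $|\varphi(s)f(t)| \geq (\Norm{\varphi}_\infty - \e)|f(t)|\Ind_A(s)$, so $\Norm{T_\varphi f} \geq (\Norm{\varphi}_\infty-\e)\Norm{(s,t)\mapsto |f(t)|\Ind_A(s)}$. The rearrangement of $(s,t)\mapsto |f(t)|\Ind_A(s)$ on $(0,1)\times(0,\alpha)$ is $\mu(f)(\cdot/\delta)$, i.e.\ it equals $D_\delta(f^*)$ where $D_\delta$ is the dilation operator and $\delta < 1$. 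Hence $\Norm{(s,t)\mapsto |f(t)|\Ind_A(s)} = \Norm{D_\delta f}_E$. Now invoke $\alpha_E = 0$: by the remark following the definition of Boyd indices, $\Norm{D_t}_{E\to E} = 1$ for all $t\in(0,1]$, and since $D_\delta$ is order-preserving and $D_{1/\delta}D_\delta = Id$ one in fact gets $\Norm{D_\delta f}_E = \Norm{f}_E$ (the inequality $\Norm{f}_E = \Norm{D_{1/\delta}D_\delta f}_E \geq \Norm{D_\delta f}_E$ needs care since $1/\delta > 1$, so instead argue directly that $D_\delta$ is an isometric embedding: $\Norm{D_\delta f}_E \leq \Norm{f}_E$ from $\Norm{D_\delta}=1$, and $\Norm{f}_E \leq \Norm{D_\delta f}_E$ because $f^* \leq (D_\delta f)^*$ pointwise as $\delta < 1$ and the quasi-norm is increasing). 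Therefore $\Norm{T_\varphi f} \geq (\Norm{\varphi}_\infty - \e)\Norm{f}_E$ for all $f \in E$ and all $\e > 0$, giving $\normop{T_\varphi} \geq \Norm{\varphi}_\infty$.

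The main obstacle, and the only subtle point, is the identification of decreasing rearrangements of functions on the product space $(0,1)\times(0,\alpha)$ with dilations of rearrangements on $(0,\alpha)$ — namely that $\mu_{(0,1)\times(0,\alpha)}\big((s,t)\mapsto g(t)\Ind_A(s)\big) = D_{|A|}(g^*)$. This is a routine computation with distribution functions ($\nu\set{(s,t): |g(t)|\Ind_A(s) > \lambda} = |A|\cdot \nu\set{t : |g(t)|>\lambda}$), but it must be stated cleanly since everything hinges on it. Once this rearrangement identity is in hand, the role of $\alpha_E = 0$ is exactly to make the relevant dilation (contraction, since $\delta<1$) into an isometry, and both bounds close immediately.
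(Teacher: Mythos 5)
Your upper bound and the rearrangement identity $\mu\big((s,t)\mapsto g(t)\Ind_A(s)\big) = D_{|A|}(g^*)$ are correct and match the paper's computation. The gap is in the lower bound, where you claim that $D_\delta$ is an \emph{isometric embedding} of $E$ for $\delta<1$ whenever $\alpha_E=0$, justified by the pointwise inequality $f^* \leq (D_\delta f)^*$. That inequality is backwards: with the paper's convention $D_\delta f(s) = f(s/\delta)$, for $\delta<1$ one has $(D_\delta f)^*(s) = f^*(s/\delta) \leq f^*(s)$, since $s/\delta \geq s$ and $f^*$ is non-increasing. So monotonicity of the quasi-norm only gives $\Norm{D_\delta f}_E \leq \Norm{f}_E$ again, not the reverse. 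Moreover the isometry claim itself is false in general: take $E = L_1\cap L_\infty$ (for which $\Norm{D_t}_{E\to E}=1$ for all $t\leq 1$, so $\alpha_E=0$) and $f=\Ind_{(0,10)}$; then $\Norm{D_\delta f}_E = \max(10\delta,1) < 10 = \Norm{f}_E$ for $\delta\in(1/10,1)$. Consequently your intermediate assertion ``$\Norm{T_\varphi f} \geq (\Norm{\varphi}_\infty-\e)\Norm{f}_E$ for all $f\in E$'' fails in this example with $\varphi = \Ind_A$, $|A|=\delta$.

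The conclusion $\normop{T_\varphi}\geq\Norm{\varphi}_\infty$ is nevertheless true, and the repair is exactly what the paper does: the hypothesis $\alpha_E=0$ gives only $\Norm{D_b}_{E\to E}=1$ (as an operator norm), so for each $\e>0$ one chooses a \emph{near-maximizing} element $x$ with $\Norm{x}_E=1$ and $\Norm{D_b x}_E \geq 1-\e$, and tests $T_\varphi$ on that particular $x$ rather than on an arbitrary $f$. With that single change your argument closes; everything else in your proposal is sound and follows the same route as the paper.
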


\begin{proof}
It is clear that $\normop{T_\varphi} \leq \Norm{\varphi}_{\infty}$. Let $ a < \Norm{\varphi}_{\infty}$. This means that 
$\nu\p{ \set{\md{\varphi} > a}} = b > 0$. 
Remark that for any $g \in E$ and $t>0$, 
$$\nu\p{\set{\md{T_{a\Ind_{\md{\varphi}>a}}(g)} > t}} = \nu\p{\set{\md{\varphi}>a}\times\set{\md{ag}>t}} = b\nu\p{\set{\md{ag}>t}}.$$
And since $b\leq 1$,
$$\nu\p{\md{aD_b(g)}>t} = b\nu\p{\set{\md{ag}>t}}.$$
Hence, $\mu (T_{a\Ind_{\md{\varphi} > a}}(g)) = a\mu (D_b(g))$. 

Since $\alpha_E = 0$, $\Norm{D_b}_{E\to E} = 1$. Let $\e >0$, there exists $x \in E$ such that $\Norm{x}_E = 1$ and $\Norm{D_b(x)}_E \geq 1-\e$. Hence,
$$\normop{T_\varphi} \geq \Norm{T_\varphi(x)}_E \geq \Norm{T_{a\Ind_{\md{\varphi} > a}}(x)}_E = a\Norm{D_b(f)}_E \geq a(1-\e).$$
Since this is true for any $a<\Norm{\varphi}_{\infty}$ and $\e>0$, we can conclude that $\normop{T_\varphi} \geq \Norm{\varphi}_{\infty}$.
\end{proof}

\section{Interpolation spaces for the couple $(L_p,L_{\infty})$}\label{3}

Recall that $\alpha \in (0,\infty]$ is fixed and that for any $p\in [0,\infty]$ we write $L_p = L_p(0,\alpha)$. We are interested in the following extension of the Lorentz-Shimogaki theorem to $p<1$:

\begin{theorem} \label{thm:LorentzShimogakiquasiBanach}
Let $p<1$ and $E$ be a symmetric function space. Then $E$ is an interpolation space for the couple $(L_p,L_{\infty})$ if and only if $E$ is left-$p$-monotone.
\end{theorem}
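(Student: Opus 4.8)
The plan is to prove the two implications separately, relying on the $K$-functional formula \eqref{formula:KtLpLinfty} for the couple $(L_p,L_\infty)$ and on Proposition \ref{prop:KmonToInter}.

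\textbf{The easy direction: interpolation space $\Rightarrow$ left-$p$-monotone.} Suppose $E$ is an interpolation space for $(L_p,L_\infty)$. Given $f\in E$ and $g\in L_0$ with $|f|^p\succ|g|^p$, I first note by Lemma \ref{lem:inclusions}(2) (after checking $E\subset L_p+L_\infty$, which follows from $E$ being an interpolation space for $(L_p,L_\infty)$, hence $L_p\cap L_\infty\subset E\subset L_p+L_\infty$) that it suffices to produce the norm bound. The hypothesis $|f|^p\succ|g|^p$ says exactly $\int_0^t (f^*)^p\geq\int_0^t(g^*)^p$ for all $t$, which by \eqref{formula:KtLpLinfty} is equivalent (up to the implicit constants) to $K_t(f,L_p,L_\infty)\gtrsim K_t(g,L_p,L_\infty)$ for all $t>0$. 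So I need to know that $(L_p,L_\infty)$ is a Calderón couple — but that is precisely the Lorentz–Shimogaki/Cwikel content recorded in Theorem \ref{thm:Kmon for Lp literature} for $p\in(0,\infty)$, in particular for $p<1$. Hence $E$ is $K$-monotone for $(L_p,L_\infty)$, so $g\in E$ and $\|g\|_E\lesssim\|f\|_E$. (One small point: the equivalence in \eqref{formula:KtLpLinfty} has multiplicative constants, so $K$-monotonicity holds with a constant depending on $p$; this is harmless for the qualitative statement.)

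\textbf{The substantive direction: left-$p$-monotone $\Rightarrow$ interpolation space.} Here I want to show $E$ is $K$-monotone for $(L_p,L_\infty)$ and conclude with Proposition \ref{prop:KmonToInter}. Lemma \ref{lem:inclusions}(2) already gives $L_p\cap L_\infty\subset E\subset L_p+L_\infty$. Now take $f\in E$, $g\in L_p+L_\infty$ with $K_t(g)\leq K_t(f)$ for all $t>0$; by \eqref{formula:KtLpLinfty} this gives, up to a fixed constant $c=c(p)$, that $\int_0^{s}(g^*)^p\leq c\int_0^{s}(f^*)^p$ for all $s>0$, i.e. $H^{(p)}(g)\leq c^{1/p}H^{(p)}(f)$ pointwise, equivalently $|f|^p$ majorizes (left) a constant multiple of $|g|^p$. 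To invoke left-$p$-monotonicity directly I would like an exact inequality $\int_0^s(g_0^*)^p\leq\int_0^s(f^*)^p$. Two ways to arrange this: either absorb the constant into $f$ by noting left-$p$-monotonicity is stable under scaling the function inside (replace $f$ by $\lambda f$ and track the quasi-norm), or — cleaner — replace $g$ by $g_0$ with $g_0^* = c^{1/p} D_{c^{-r}}(?)$... actually the simplest fix is: left-$p$-monotonicity applied with the pair $(\lambda f, g)$ where $\lambda = c^{1/p}$ gives $g\in E$ with $\|g\|_E\leq C\|\lambda f\|_E = C\lambda^{?}\ldots$; since $E$ is only quasi-normed, scaling by $\lambda$ multiplies the norm by $\lambda$ (homogeneity of the quasi-norm), so $\|g\|_E\leq C c^{1/p}\|f\|_E$. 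Thus $E$ is $K$-monotone with constant $Cc^{1/p}$, and Proposition \ref{prop:KmonToInter} finishes the argument. (Strictly, Proposition \ref{prop:KmonToInter} as stated uses $K$-monotonicity with a general constant $C$, so the non-unit constant is fine; we only lose exactness.)

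\textbf{Expected main obstacle.} The bookkeeping with constants is the only real subtlety: \eqref{formula:KtLpLinfty} is an $\approx$, and left-$p$-monotonicity is defined with an $\int_0^t(f^*)^p\geq\int_0^t(g^*)^p$ with \emph{no} constant, so one must carefully insert a dilation or a scalar multiple to pass from the constant-laden $K$-functional comparison to the clean majorization hypothesis, and verify this does not degrade into a circular use of the very monotonicity we are trying to exploit. I would handle this by the explicit substitution $\tilde f := c^{1/p} f$ as above (using homogeneity of the quasi-norm), noting $|\tilde f|^p = c|f|^p$ so $\int_0^t(\tilde f^*)^p = c\int_0^t (f^*)^p\geq \int_0^t(g^*)^p$, which is exactly the hypothesis of left-$p$-monotonicity. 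The other point worth a sentence is that the reverse direction genuinely needs the Calderón-couple property of $(L_p,L_\infty)$ for $p<1$; this is exactly what Theorem \ref{thm:Kmon for Lp literature} (Sparr–Cwikel) supplies, and it is why the statement is an ``extension of the Lorentz–Shimogaki theorem to $p<1$'' rather than something provable by soft arguments alone.
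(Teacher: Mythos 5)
Your direction ``left-$p$-monotone $\Rightarrow$ interpolation space'' is correct and is essentially how the paper handles it: Lemma \ref{lem:inclusions}(2) gives the inclusions $L_p\cap L_\infty\subset E\subset L_p+L_\infty$, formula \eqref{formula:KtLpLinfty} turns the $K$-functional comparison into $\int_0^s(g^*)^p\leq c\int_0^s(f^*)^p$ for all $s$, and the substitution $\tilde f=c^{1/p}f$ absorbs the constant so that left-$p$-monotonicity applies literally; Proposition \ref{prop:KmonToInter} then concludes. Your constant bookkeeping there is fine.

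The other implication, however, has a genuine gap, and it is precisely the part that carries the substance of the theorem. You call ``interpolation space $\Rightarrow$ left-$p$-monotone'' the easy direction and dispose of it by asserting that $(L_p,L_\infty)$ is a Calder\'on couple for $p<1$, citing Theorem \ref{thm:Kmon for Lp literature}. But none of the four items of that theorem covers the couple $(L_p(0,\alpha),L_\infty(0,\alpha))$ with $p<1$: the Lorentz--Shimogaki item requires $p\geq 1$; Sparr's item requires $p,q\geq 1$; the Sparr--Cwikel item is for $p,q\in(0,\infty)$, i.e.\ it excludes the endpoint $L_\infty$; and Cwikel's item concerns the sequence-space couple $(\ell^p,\ell^\infty)$ only. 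This is exactly why the introduction states that the result of Section \ref{3} ``was not a direct consequence of previous works.'' What is missing from your argument is the content of Proposition \ref{prop:LS:f mapsto g if f succ g}: after discretizing $f^*$ and $g^*$ so that they take values in a geometric lattice $\{d^n\}_{n\in\Zb}\cup\{0\}$, one constructs by hand a function $h\leq 2^{1/p}f$ and an operator $T$ (a weighted change of variables $u\mapsto \phi'^{1/p}\cdot(u\circ\phi)$) that is simultaneously contractive on $L_p$ and on $L_\infty$ and satisfies $Th^*=g$; only then can the interpolation property of $E$ be applied to $T$ to conclude $g\in E$ with $\Norm{g}_E\les\Norm{f}_E$. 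Without such a construction (or an external reference establishing the Calder\'on-couple property of $(L_p,L_\infty)$ for $p<1$), your proof of this implication does not go through.
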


We will deduce the theorem above from the following proposition:

\begin{property}\label{prop:LS:f mapsto g if f succ g}
Let $f,g \in L_p + L_{\infty}$ and $d>0$. Suppose that $f$ and $g$ are non-negative, non-increasing, left-continuous, take values in $\Db = \{d^n\}_{n\in\Zb} \cup \{0\}$ and $f^p \succ g^p$. Then there exists an operator $T: L_p + L_{\infty} \to L_p + L_{\infty}$ and a positive function $h$ such that:
\begin{itemize}
\item $h \leq 2^{1/p}f$
\item $Th^* = g$, 
\item the restrictions of $T$ on $L_p$ and $L_{\infty}$ are contractions.
\end{itemize}
\end{property}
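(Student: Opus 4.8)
The plan is to build the operator $T$ explicitly by "transporting mass" from the rearranged graph of $f$ onto the graph of $g$, exploiting the discretization into $\Db = \{d^n\}$. Since $f$ and $g$ take values in a geometric scale, the level sets $\{f = d^n\}$ and $\{g = d^n\}$ are intervals (by monotonicity and left-continuity), and the left-majorization $f^p \succ g^p$ gives us control on how the partial integrals $\int_0^t (f^*)^p = \int_0^t f^p$ dominate $\int_0^t g^p$. I will first record the combinatorial consequence of $f^p \succ g^p$ at the level of the "staircase" data: writing the distribution functions of $f^p$ and $g^p$ as step functions with jumps at dyadic-type heights $d^{np}$, the majorization translates into a statement about which mass at height $d^{np}$ in $g$ can be "covered" by mass at heights $\geq d^{np}$ in $f$. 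This is exactly the kind of bipartite transport that the Hardy–Littlewood–Pólya / Schur–Horn circle of ideas handles, but here I want an honest operator, not just a convex combination.

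Next I would construct $T$ as a sum of weighted composition (substitution) operators, one for each pair of levels $(m,n)$ with $d^m \leq d^n$ that participate in the transport: on the piece of $g$ at height $d^n$ sitting over an interval $I$, $T$ acts by averaging/pulling back the piece of $h^*$ coming from the level-$d^m$ part of $h$, with a coefficient $d^{n-m} \leq 1$. The function $h$ should be taken to equal $f$ except that I split or duplicate levels as needed so that $h^*$ has a block structure matching the decomposition used to define $T$; the factor $2^{1/p}$ in $h \leq 2^{1/p} f$ is the slack I expect to need when a single level of $f$ has to be cut into at most two sub-blocks to line up with two distinct demands from $g$ (so each appears with multiplicity at most two, costing a factor $2$ inside the $p$-th power, i.e. $2^{1/p}$ outside). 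The identity $Th^* = g$ will then be immediate from the construction: each level block of $g$ is the image of exactly the block of $h^*$ assigned to it.

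The two contraction conditions are where the care goes. For the $L_\infty$ contraction: $T$ is an average of values of $h^*$ rescaled by the coefficients $d^{n-m}\leq 1$, so $\|Th^*\|_\infty \leq \|h^*\|_\infty$ pointwise-style estimates give a contraction — but I must make sure the coefficients across the different composition pieces hitting the same output point sum to at most $1$; this is guaranteed by the "mass balance" in the transport, i.e. the partition of the demand at level $n$. For the $L_p$ contraction: this is exactly where $f^p \succ g^p$ (and not just $f \succ g$) is used. Writing $\|Tu\|_p^p$ for $u$ supported on the relevant blocks, the transport coefficients $d^{(n-m)p}$ together with the lengths of the intervals are arranged so that $\sum (\text{coeff})^p \cdot (\text{input } p\text{-mass}) \leq \|u\|_p^p$ precisely because $\int_0^t g^p \leq \int_0^t f^p$ for all $t$ — a Hardy-inequality-type rearrangement summation. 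I expect the main obstacle to be the bookkeeping that makes the same operator $T$ simultaneously (a) map $h^*$ exactly to $g$, (b) be built from genuine substitution operators so it is automatically bounded on the couple, and (c) have its coefficients satisfy *both* the $\ell^1$-type bound (for $L_\infty$) and the $\ell^p$-weighted bound (for $L_p$) coming from the same transport plan; reconciling these will force the precise greedy/left-to-right matching of $g$'s levels against $f$'s levels, and verifying that this greedy matching never "runs out" of $f$-mass is exactly the content of $f^p\succ g^p$. A secondary technical point is handling the case $\alpha = \infty$ and functions not in $L_\infty$ or not in $L_p$, but since we assume $f,g \in L_p + L_\infty$ and everything is reduced to finite blocks plus a tail, this should go through by an exhaustion/limiting argument.
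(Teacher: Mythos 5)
Your outline has the right general shape---the paper also builds $T$ as a weighted substitution operator $u\mapsto \phi'^{1/p}\,(u\circ\phi)$, where $\phi$ is the change of variables matching the cumulative $p$-masses of the source and of $g$, and the $L_p$ contraction is indeed just the change-of-variables formula while the $L_\infty$ contraction is the statement that the transport coefficient is $\leq 1$ everywhere. But there is a genuine gap at exactly the point you flag as ``where the care goes''. For the $L_\infty$ bound you need every matched source value to dominate the corresponding target value of $g$, and this does \emph{not} follow from $f^p\succ g^p$ plus a greedy matching: left-majorization allows $g>f$ (even $g^p>2f^p$) on whole intervals. Concretely, take $p=1$, $f=2\cdot\Ind_{(0,1)}+\Ind_{(1,2)}$, $g=1.5\cdot\Ind_{(0,2)}$; then $f\succ g$, but any mass of $f$ drawn from the level-$1$ block to cover the part of $g$ over $(1,2)$ forces a coefficient $1.5>1$. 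So your assertion that the coefficients are automatically $\leq 1$ by ``mass balance'' is false as stated, and your explanation of the factor $2^{1/p}$ (splitting a level of $f$ into two sub-blocks) is not the mechanism that repairs this.

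The missing idea is the choice of $h$: the paper sets $h:=2^{1/p}f\,\Ind_{\{2f^p\geq g^p\}}$, i.e.\ it \emph{discards} the part of $f$ where $f$ is too small relative to $g$ and doubles the $p$-mass of what remains. On the support of $h$ one then has $h\geq g$ pointwise, which is what makes the transport coefficient $\phi'^{1/p}=g/(h^*\circ\phi)\leq 1$; and the computation
$\int_0^t h^p-\int_0^t g^p\geq 2\int_0^t(f^p-g^p)\geq 0$,
based on the observation that $g^p>2f^p\Leftrightarrow g^p<2(g^p-f^p)$, shows that the retained mass still majorizes $g^p$, so the matching never runs out. Without this (or an equivalent truncation), your construction cannot satisfy the $L_\infty$ contraction and $Th^*=g$ simultaneously. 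The remaining technical points in the paper (passing from $h$ to $h^*$, checking $\phi$ is well defined and piecewise affine thanks to the range being in $\Db$) are close to the bookkeeping you describe.
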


\begin{proof}
Define $h := 2^{1/p}f\Ind_{2f^p\geq g^p}$. Let $t>0$, let us show that $\Int{0}{t} h^p \geq \Int{0}{t} g^p$. Note that $g^p > 2f^p \Leftrightarrow g^p < 2(g^p - f^p)$.
\begin{align*}
\Int{0}{t} h^p - \Int{0}{t} g^p &= \Int{0}{t} \Ind_{2f^p\geq g^p} (2f^p - g^p) - \Ind_{2f^p < g^p} g^p \\
&\geq \Int{0}{t} \Ind_{2f^p\geq g^p} 2(f^p - g^p) - \Ind_{2f^p < g^p} 2(g^p - f^p) \\
&= 2\Int{0}{t} f^p - g^p \\
&\geq 0.
\end{align*}
Denote $X := \Norm{g}_p \in (0,\infty]$. Define: 
\begin{center}
$H:t \mapsto \Int{0}{t} h^p$ and $G:t \mapsto \Int{0}{t} g^p$. 
\end{center}
Let $x\in [0,X)$, note that $G^{-1}(x)$ is well-defined since $G$ is continuous and increasing on $[0,\beta]$, the support of $g$. Define also $H^{-1}(x) = \min H^{-1}(\{x\})$ ($H$ is continuous). Note that $h(H^{-1}(x)) \neq 0$. Indeed, by definition of $t_x := H^{-1}(x)$, for all $\e>0$, $\Int{t_x-\e}{t_x} h \neq 0$ so there exists an increasing sequence $(t_n)$ converging to $t_x$ and such that $h(t_n) \neq 0$. This means that $2f(t_n)\geq  g(t_n)$. Using the fact that $f$ is left-continuous and $g$ is non-increasing, we obtain $2f(t_x) \geq g(t_x)$. And since we showed that $H \geq G$, $H^{-1}(x) \leq G^{-1}(x)$, hence $g^p(H^{-1}(x)) \geq g^p(G^{-1}(x))$. In conclusion:
\[
h^p(H^{-1}(x)) \geq g^p(G^{-1}(x)). \addtag \label{eq:LS:h>g}
\]
Consider now $h^*$ and $H_*: t \mapsto \Int{0}{t} (h^*)^p$. By construction, $h$ restricted to $\{h>0\}$ is non-increasing and $h^*$ is non-increasing too. 
Since $h$ and $h^*$ also have a countable range, they are of the form $\Sum{i\in\Nb}{} a_i \Ind_{I_i}$ where the $I_i$ are intervals. 
Let $t$ in the support of $h$ so that $t$ is not an end point of one of those intervals then since $h$ is non-increasing: $$\Int{h>h(t)}{} h < \Int{0}{t} h < \Int{h\geq h(t)}{} h.$$
Similarly, if $t'$ is in the support of $h^*$ and not at an end point,
$$\Int{h^*>h^*(t')}{} h^* < \Int{0}{t'} h^* < \Int{h^*\geq h^*(t')}{} h^*.$$
Since $h$ and $h^*$ have the same distribution, this means that for all $s>0$, 
\begin{center}
$\Int{h>s}{} h = \Int{h^*>s}{} h^*$ and $\Int{h\geq s}{} h = \Int{h^*\geq s}{} h^*$.
\end{center}
By the previous inequalities, we deduce that except for a countable number of $t$ and $t'$, if $H(t) = H_*(t')$ then $h(t) = h^*(t')$. Since $H_*$ is strictly increasing on $H_*^{-1}((0,X))$, this implies by \eqref{eq:LS:h>g} that for all $x\in (0,X)$, except maybe a countable number, 
\[
h^*(H_*^{-1}(x)) = h(H^{-1}(x)) \geq g(G^{-1}(x)).\addtag\label{eq:LS:h*>g}
\]
Let $\phi = H_*^{-1} \circ G$ on $(0,\beta)$. Remark that since $h^*$ and $g$ have range in $2^{1/p}\Db$ and $\Db$ respectively, $H_*$ and $G$ are piecewise affine on every compact of $(0,\beta)$. Since $H_* \circ \phi = G$, by derivating, for all $t\in (0,\alpha)$, except maybe a countable number, 
\[ 
\phi'(t)(h^*\circ\phi)^p(t) = g^p(t). \addtag\label{eq:LS:formula for T}
\]
Hence define $T: L_p + L_{\infty} \to L_p + L_{\infty}$ by $T(u) = (\phi'^{1/p}\cdot u\circ\phi)\circ\Ind_{(0,\beta)}$. By the change of variables formula, $T$ is a contraction on $L_p$. Indeed, one can apply the formula on any compact of $(0,\beta)$ and obtain the $p$-norm as a limit. Let $t\in (0,\alpha)$ and write $t = G^{-1}(x)$, then by \eqref{eq:LS:h*>g}:
$$\phi'(t) = (H_*^{-1} \circ G)'(t) = \dfrac{g^p(t)}{(h^*)^p(H_*^{-1}(G(t))} = \p{\dfrac{g(G^{-1}(x))}{h^*(H_*^{-1}(x))}}^p \leq 1,$$
except for a countable number of values of $t$. So $T$ is also a contraction on $L_{\infty}$. Finally, by \eqref{eq:LS:formula for T}, $Th^* = g$.
\end{proof}

\begin{proof}[Proof of Theorem \ref{thm:LorentzShimogakiquasiBanach}]
The reverse implication is a consequence of Proposition \ref{prop:KmonToInter} and Lemma \ref{lem:inclusions}. So let us focus on the direct implication. Let $E$ be a interpolation space for the couple $(L_p, L_{\infty})$ with constant $C$. Let $f\in E$ and $g\in L_p + L_{\infty}$ such that for all $t>0$, $K_t(f,L_p,L_{\infty})\geq K_t(g,L_p,L_{\infty})$. By \eqref{formula:KtLpLinfty}, we can replace this condition by $f^p \succ g^p$. Let $d \in (1,\infty)$ and define $f_2 = d^{\Floor{\log_d(f^*)}+1}$ and $g_2 = d^{\Floor{\log_d(g^*)}}$. We have $ f^* \leq f_2 \leq df^*$, $g^*\leq dg_2 \leq dg^*$, hence $f_2^p \succ g_2^p$. Furthermore, $f_2$ and $g_2$ take values in $\Db = \{d^n\}_{n\in\Zb} \cup \{0\}$. Take $h$ and $T$ given by Proposition \ref{prop:LS:f mapsto g if f succ g}. Since $h\leq 2^{1/p} f_2$, $h\in E$ so by the definition of an interpolation space, $T(h^*) = g_2 \in E$ and finally $g \in E$. Furthermore:
\begin{align*}
\Norm{g}_E &= \Norm{g^*}_E \leq d\Norm{g_2}_E \leq dC\Norm{h}_E \\
&\leq dC2^{1/p} \Norm{f_2}_E \leq d^2 C2^{1/p}\Norm{f^*}_E = d^2 C2^{1/p}\Norm{f}_E.
\end{align*}
This is true for any $d\in (1,\infty)$ so, $\Norm{g}_E \leq C2^{1/p}\Norm{f}_E$.
\end{proof}

\section{Interpolation spaces for the couple $(L_p,L_q)$}\label{4}

\subsection{Function spaces}

In this section, we prove the main theorem of this paper, a characterization of interpolation spaces between $L_p$ and $L_q$. The proof uses a strategy similar as \cite{CwiNil18}. Let us first state the result precisely.

\begin{theorem}\label{thm:functionspace}
Let $0<p\leq q<\infty$ and $E$  be a quasi-Banach symmetric function space. Then $E$ is left-$p$-monotone and right-$q$-monotone if and only if $E$ is an interpolation space between $L_p$ and $L_q$.
\end{theorem}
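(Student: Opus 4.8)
The plan is to prove the two implications separately, in each case passing through $K$-monotonicity for the couple $(L_p,L_q)$: recall that by the Sparr--Cwikel part of Theorem~\ref{thm:Kmon for Lp literature} this couple is a Calder\'on couple, so an interpolation space for it is the same thing as a $K$-monotone space for it, and by Proposition~\ref{prop:KmonToInter} $K$-monotonicity is in any case sufficient for being an interpolation space.

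\emph{Interpolation implies the two monotonicities.} Here it suffices to check that $K$-domination for $(L_p,L_q)$ is implied by $p$-left-majorization and by $q$-right-majorization. Concretely: if $f\in E\subset L_p+L_q$ and $|f|^p\succ|g|^p$, then $K_t(g,L_p,L_q)\les K_t(f,L_p,L_q)$ for all $t$, and symmetrically when $|f|^q\triangleright|g|^q$; $K$-monotonicity of $E$ then yields $g\in E$ with $\Norm{g}_E\les\Norm{f}_E$. Writing $u=t^r$ in \eqref{formula:KfuncLpLq}, the ``head'' term $\int_0^u(g^*)^p$ of $K_t(g)$ is at most the one for $f$ by the majorization, and for the ``tail'' term one uses $g^*(s)^q\le(\frac1s\int_0^s(g^*)^p)^{q/p}\le(\frac1s\int_0^s(f^*)^p)^{q/p}$ and then bounds $\int_u^\infty(g^*)^q$ by $\int_0^u(f^*)^p$ plus the $q$-tail of $f$ through a dyadic Hardy estimate that relies on the concavity of $s\mapsto\int_0^s(f^*)^p$ and on the membership $f\in L_p+L_q$.

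\emph{The two monotonicities imply interpolation.} By Proposition~\ref{prop:KmonToInter} it is enough to show $E$ is $K$-monotone for $(L_p,L_q)$. Let $f\in E$ and $g\in L_p+L_q$ with $K_t(f,L_p,L_q)\ge K_t(g,L_p,L_q)$ for all $t$; by rearrangement invariance of $E$ and of the $K$-functional we take $f=f^*$, $g=g^*$ non-increasing, and by \eqref{formula:KfuncLpLq} the hypothesis becomes, up to constants, the assertion that at every scale $u>0$ the pair $\left( (\int_0^u(f^*)^p)^{1/p},\ u^{1/r}(\int_u^\infty(f^*)^q)^{1/q} \right)$ dominates the corresponding pair for $g$ in the ``$\max$'' sense. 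Following the strategy of \cite{CwiNil18}, I would split $g$ into a sum of pieces $g=\sum_k g_k$ obtained by truncating $g^*$ along a suitable (in general dyadic) sequence of scales, and for each $k$ use the scale-wise comparison to construct from $f$ an auxiliary function $\phi_k\in E$ with $\Norm{\phi_k}_E\les 2^{-|k|}\Norm{f}_E$ and with either $|\phi_k|^p\succ|g_k|^p$ or $|\phi_k|^q\triangleright|g_k|^q$; then left-$p$-monotonicity and right-$q$-monotonicity put each $g_k$ in $E$ with $\Norm{g_k}_E\les 2^{-|k|}\Norm{f}_E$, and the iterated (Aoki--Rolewicz) form of the quasi-triangle inequality reassembles $g\in E$ with $\Norm{g}_E\les\Norm{f}_E$.

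\emph{Main obstacle.} The hard direction is the second one, and its crux is the construction of the auxiliary functions $\phi_k$: the scale-wise comparison holds only up to a constant and only in the weak ``head or tail'' form, so at scales where $g$'s head mass is controlled not by $f$'s head but by $f$'s tail one must genuinely transport mass of $f$ --- while staying inside $E$ with a controlled norm --- and the whole decomposition must be organised so that these norms decay geometrically in $k$, which is what makes the quasi-Banach summation go through. The first implication's only subtlety is the borderline nature of the Hardy estimate for the $L_q$-tail, which uses the membership $f\in L_p+L_q$ in an essential way.
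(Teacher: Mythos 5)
Your overall skeleton (reduce everything to $K$-monotonicity via Holmstedt's formula and the fact that $(L_p,L_q)$ is a Calder\'on couple) is the right one, but both directions contain a genuine gap.

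For the ``interpolation implies monotone'' direction, the two majorizations are not symmetric, and the word ``symmetrically'' hides the real difficulty. Under $|f|^p\succ|g|^p$ your argument is sound: the head term is immediate and the tail term follows from $g^*(s)\le\p{\frac1s\int_0^s(g^*)^p}^{1/p}$ together with the boundedness of the Hardy operator $H^{(p)}$ on $L_q$ for $q>p$ (the paper instead gets this case for free by reiteration to the couple $(L_p,L_\infty)$ and Theorem \ref{thm:LorentzShimogakiquasiBanach}). But under $|f|^q\triangleright|g|^q$ the roles do not simply swap: now the tail term is immediate while the head term $\int_0^{t^r}(g^*)^p$ is the hard one, and there is no pointwise substitute for your estimate, because the condition $\int_s^\infty(g^*)^q\le\int_s^\infty(f^*)^q$ gives no upper bound on $g^*(s)$ itself. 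This is exactly where the paper has to work: it enlarges $g$ so that $\Norm{g'}_q=\Norm{f}_q$, invokes Hiai's theorem to produce a unital, positive, integral-preserving operator $T$ with $T((g'^*)^q)=(f^*)^q$, and then applies the operator H\"older inequality (Lemma \ref{lem:holder}) with exponents $q/p$ and $r/p$ to split $\int_0^{t^r}(g^*)^p$ into the two constituents of $K_t(f,L_p,L_q)$. You would need an argument of this strength; the pointwise-plus-Hardy route does not adapt.

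For the ``monotone implies interpolation'' direction, your plan is an infinite dyadic decomposition $g=\sum_k g_k$ with auxiliary majorants $\phi_k$ whose $E$-norms decay like $2^{-|k|}$, reassembled by Aoki--Rolewicz. You yourself flag the construction of the $\phi_k$ as the unresolved crux, and it is: extracting geometric decay of $E$-quasi-norms from scale-wise $K$-functional information is precisely the kind of quantitative $K$-divisibility one wants to avoid in the quasi-Banach setting, and nothing in your sketch indicates how to achieve it. The paper's Lemma \ref{lem:technique:fKg} shows that no infinite decomposition is needed: setting $A=\set{t:\int_0^t f^p\ge\int_0^t g^p}$, $a(t)=\min\p{A\cap[t,\infty)}$, $h(t)=g(a(t))$ and $l=g-h$ yields a single two-piece splitting $g^*=h+l$ with $(cf)^p\succ h^p$ and $(cf)^q\triangleright l^q$, the key point being that $A\cup B=(0,\infty)$ where $B$ is the analogous set for the $q$-tails, as a consequence of the $K$-domination. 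One application of left-$p$-monotonicity, one of right-$q$-monotonicity, and a single quasi-triangle inequality then finish. So your approach is not wrong in spirit, but as written it is a plan with a hole precisely at the step that constitutes the theorem; the two-piece decomposition is the missing idea.
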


\begin{proof}[Proof of the reverse implication]
Suppose that $E$ is an interpolation for the pair $(L_p,L_q)$. Then by reiteration, $E$ is an interpolation space for the pair $(L_p,L_{\infty})$ and by Theorem \ref{thm:LorentzShimogakiquasiBanach}, $E$ is left-$p$-monotone. Let $f\in E$ and $g\in L_p + L_q$ such that $f^q \triangleright g^q$. Assume that $f$ and $g$ are positive by taking if necessary their modules. We will show that $g\in E$ and $\Norm{g}_E \les \Norm{f}_E$. Recall that $(L_p,L_q)$ is a Calder\'on couple (Theorem \ref{thm:Kmon for Lp literature}), so it is enough to show that for all $t>0$,
$$K_t(g,L_p,L_q) \les K_t(f,L_p,L_q).$$ 
Recall also that, for all $h\in L_p + L_q$ and $t>0$ (\cite{Hol70}):
$$K_t(h,L_p,L_q) \approx \p{\Int{0}{t^{r}} (h^*)^p}^{1/p} + t \p{\Int{t^{r}}{\infty} (h^*)^q}^{1/q},$$
where $r = (p^{-1} - q^{-1})^{-1}$.
Let $t>0$. First, since $f^q \triangleright g^q$, 
$$t \p{\Int{t^{r}}{\infty} (g^*)^q}^{1/q} \leq t \p{\Int{t^{\alpha}}{\infty} (f^*)^q}^{1/q} \leq K_t(f,L_p,L_q).$$
We now have to estimate the other term i.e to prove that:
$$ \p{\Int{0}{t^{r}} (g^*)^p}^{1/p} \les K_t(f,L_p,L_q).$$ 
Suppose now that $f$ and $g$ are bounded so that $f^q$ and $g^q$ belong to $L_1$. Using again the fact that $f^q \triangleright g^q$, it is easy to see that there exists $g'\geq g$ such that $\Norm{g'}_q = \Norm{f}_q$ and $g'^q \succ f^q$ (for example by letting $g' = g + h$ where $h$ is a function taking only the values $\Norm{f}_{\infty} +1$ and $0$ and $\Norm{g'}_q = \Norm{f}_q$). Then by \cite{Hia87} (Theorem 4.7, (1)), there exists a unital, positive, integral preserving operator $T$ such that $T((g'^*)^q) = (f^*)^q$. Hence $T((g^*)^q) \leq (f^*)^q$. Define $e = \Ind_{(0,t^{r})}$ and write,
$$\p{\Int{0}{t^{r}} (g^*)^p}^{1/p} = \p{\Int{0}{\infty} e(g^*)^p}^{1/p} = \p{\Int{0}{\infty} T(e(g^*)^p)}^{1/p}.$$
Now, apply Lemma \ref{lem:holder} with $s = q/p$ and $s'=r/p$ to obtain:
\begin{align*}
\p{\Int{0}{t^{r}} (g^*)^p}^{1/p} &\leq \p{\Int{0}{\infty} T(e)^{p/r}T((g^*)^q)^{p/q}}^{1/p} \\
&\les \p{\Int{0}{t^{r}} (f^*)^pT(e)^{p/r}}^{1/p} + \p{\Int{t^{r}}{\infty} (f^*)^pT(e)^{p/r}}^{1/p} \\
\intertext{To estimate the left summand, we use the fact that $\Norm{Te}_{\infty} \leq 1$ and for the right summand, we apply H\"older's inequality.}
&\leq \p{\Int{0}{t^{r}} (f^*)^p}^{1/p} + \p{\Int{t^{r}}{\infty} (f^*)^q}^{1/q}\p{\Int{t^{r}}{\infty} T(\Ind_{(0,t^{r})})}^{1/r} \\
&\les K_t(f,L_p,L_q)
\end{align*}
where we used that $\p{\Int{t^{r}}{\infty} T(\Ind_{(0,t^{r})})}^{1/r} \leq \p{\Int{0}{\infty} T(\Ind_{(0,t^{r})})}^{1/r} = t$. To conclude for unbounded functions, it suffices to approximate $g^*$ and $f^*$. A way to do it is to apply the previous inequality to $g^*(.-\e)$ and $f^*(.-\e)$ and let $\e$ go to $0$.
\end{proof}

For the direct implication, the following lemma is the key argument. It enables us to relate left-$p$-monotonicity and right-$q$-monotonicity to $K$-monotonicity for the couple $(L_p,L_q)$.

\begin{lemma} \label{lem:technique:fKg}
Let $0<p\leq q<\infty$, $r>0$ and $f,g\in (L_p + L_q)^+$ be non-increasing right-continuous functions such that for all $t>0$:
$$\p{\Int{0}{t^{r}} f^p}^{1/p} + t \p{\Int{t^{r}}{\infty} f^q}^{1/q} \geq \p{\Int{0}{t^{r}} g^p}^{1/p} + t \p{\Int{t^{r}}{\infty} g^q}^{1/q},$$
then there exist nonegative functions $h$ and $l$ such that $h + l = g$, $h$ is non-increasing, $f^p \succ h^p$ and $f^q \triangleright l^q$. More precisely, for all $t>0$,
$$\Int{t}{\infty} f^q \geq \Int{t}{\infty} l^q.$$
\end{lemma}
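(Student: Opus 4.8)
The plan is to construct $h$ and $l$ explicitly by splitting $g$ at a well-chosen threshold determined by the value $t=1$ (equivalently by the point $t^r = 1$) in the $K$-functional inequality. Intuitively, the hypothesis says the $K$-functional of $g$ is dominated by that of $f$; the $L_p$-part of $K$ lives on $(0,t^r)$ and the $L_q$-part on $(t^r,\infty)$. I would try to define $l$ to be the ``tail'' part of $g$ carrying the $L_q$-mass and $h$ the ``head'' part carrying the $L_p$-mass, so that the two assertions $f^p \succ h^p$ and $f^q \triangleright l^q$ become the two halves of the hypothesis. Concretely, set $c := \bigl(\int_1^\infty g^q\bigr)^{1/q}$ (using $r$-scaling this is $\int_{t^r}^\infty g^q$ at $t=1$), let $l := g\,\mathbf{1}_{\{g \le s_0\}}$ for the threshold $s_0 = g(1)$ (recall $g$ is non-increasing right-continuous, so $\{g \le s_0\} = [1,\infty)$ up to a point), and $h := g - l = g\,\mathbf{1}_{\{g > s_0\}} = \min(g, s_0)\cdot(\text{something})$ — one has to be a little careful to keep $h$ non-increasing, which forces $h = (g - g(1))_+$ essentially, or $h = g\,\mathbf{1}_{(0,1)} + g(1)\mathbf{1}_{[1,\infty)}$ truncated appropriately. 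I expect the right choice is $l = g\,\mathbf{1}_{[1,\infty)}$ and $h = g\,\mathbf{1}_{(0,1)}$, but then $h$ is non-increasing automatically and $h + l = g$; the delicate point is whether $f^p \succ h^p$ really follows.

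The key steps, in order, would be: (1) Observe that $f^q \triangleright l^q$ is essentially immediate — since $l = g$ on $[1,\infty)$ and $l = 0$ on $(0,1)$, for $t \ge 1$ we get $\int_t^\infty l^q = \int_t^\infty g^q \le \int_t^\infty f^q$ by taking the $L_q$-part of the hypothesis (drop the nonnegative $L_p$-term and use monotonicity of rearrangements), and for $t < 1$ the inequality $\int_t^\infty f^q \ge \int_t^\infty l^q$ must be checked using that $\int_t^\infty l^q = \int_1^\infty g^q$ is constant there while $\int_t^\infty f^q$ only increases as $t$ decreases, so it suffices to verify it at $t=1$, which is the previous case. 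This also yields the ``more precisely'' clause. (2) For $h^p$: for $t \le 1$, write $\int_0^t h^p = \int_0^t g^p$, and I need $\int_0^t f^p \ge \int_0^t g^p$; this is NOT immediate from the hypothesis because the hypothesis mixes the two terms. Here I would use the hypothesis at the scale $\tau$ with $\tau^r = t$, together with the $L_q$-information already extracted, to peel off the $L_q$-contributions on both sides and isolate $\int_0^t f^p \ge \int_0^t g^p$ — possibly at the cost of a constant, but the statement allows $\succ$ and $\triangleright$ which are the right notions and a constant is not needed in these definitions, so this peeling must be exact. For $t > 1$, $\int_0^t h^p = \int_0^1 g^p + (t-1)\cdot 0$? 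No — $h = g\mathbf{1}_{(0,1)}$ so $\int_0^t h^p = \int_0^1 g^p$ for $t \ge 1$, a constant, and one needs $\int_0^t f^p \ge \int_0^1 g^p$ for all $t \ge 1$; again it suffices to check at $t = 1$.

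The main obstacle I anticipate is step (2) for $t \le 1$: extracting the clean inequality $\int_0^t f^p \ge \int_0^t g^p$ from a hypothesis in which the $\int_0^{t^r} (\cdot)^p$ and $t\bigl(\int_{t^r}^\infty(\cdot)^q\bigr)^{1/q}$ terms are entangled and added together. The naive subtraction fails because $(a^p + b)^{1/p}$-type expressions do not split additively. The resolution is likely to optimize over the scale: the function $\tau \mapsto K_\tau(f,L_p,L_q)$ has a known concave/piecewise structure, and one can recover $\bigl(\int_0^{s}f^p\bigr)^{1/p}$ from the $K$-functional via a Legendre-type transform (since $K_\tau = \inf$ over decompositions, the $L_p$-head is the ``derivative at $0$'' regime). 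So the honest plan is: express $\bigl(\int_0^{t^r} g^p\bigr)^{1/p}$ as $\lim_{\tau \to 0^+}$ or as $\sup$ of $K_\tau(g,\cdot) - \tau(\cdots)$ type quantities, note this transform is monotone in the $K$-functional, and conclude. Alternatively — and this may be cleaner — mimic exactly the device used in the reverse-implication proof above: use that $(L_p,L_q)$ is a Calderón couple and the explicit operator-theoretic consequences (the Hiai/Hardy–Littlewood submajorization machinery invoked there), applying it to $g$ restricted to $(0,1)$ versus $f$. I would first attempt the direct ``peeling'' argument at scale $t=1$ and only fall back to the transform argument if the entanglement genuinely obstructs it; given that the authors phrased the lemma with a specific $r$ and the sharp ``$\int_t^\infty f^q \ge \int_t^\infty l^q$'' conclusion, I expect the direct argument to go through with the split $g = g\mathbf{1}_{(0,1)} + g\mathbf{1}_{[1,\infty)}$ after possibly rescaling so that the critical point is $1$.
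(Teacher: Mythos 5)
There is a genuine gap, and it is the central one. Your decomposition cuts $g$ at a single threshold ($l=g\Ind_{[1,\infty)}$, $h=g\Ind_{(0,1)}$), which presupposes that the set where the head inequality $\int_0^t f^p\geq\int_0^t g^p$ holds is an interval $(0,t_0]$ and that the tail inequality $\int_t^\infty f^q\geq\int_t^\infty g^q$ holds beyond $t_0$. Neither is a consequence of the hypothesis. What the hypothesis actually gives, by evaluating it at the scale $s$ with $s^r=t$, is only a \emph{dichotomy}: for each $t$, if $\int_0^{t} f^p<\int_0^{t} g^p$ then necessarily $\int_{t}^\infty f^q\geq\int_{t}^\infty g^q$ (one summand must compensate for the other). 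In other words, with $A=\{t:\int_0^t f^p\geq\int_0^t g^p\}$ and $B=\{t:\int_t^\infty f^q\geq\int_t^\infty g^q\}$ one gets $A\cup B=\Rb^+$, but $A$ can be a complicated closed set, not an interval: $\int_0^t(f^p-g^p)$ may oscillate in sign as long as the $q$-tail condition covers the bad regions. For $t$ in a gap of $A$ below your threshold, your $h$ equals $g$ there and $f^p\succ h^p$ fails. Your step (1) has the same flaw in the other direction: you ``drop the nonnegative $L_p$-term'' to extract $\int_t^\infty f^q\geq\int_t^\infty g^q$, but dropping a term from the \emph{right-hand} side of the hypothesis weakens the wrong side; no inequality between corresponding summands follows from an inequality between the sums. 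Your proposed fallbacks do not repair this: the Legendre-transform idea cannot produce $\int_0^t f^p\geq\int_0^t g^p$ for all $t\leq 1$ because that statement is simply false in general, and invoking the Calder\'on-couple machinery for $(L_p,L_q)$ is close to circular here, since this lemma is precisely the combinatorial input needed to exploit that machinery, and it must deliver exact majorizations and an explicit splitting $h+l=g$ with $h$ non-increasing.

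The correct construction (the one the paper uses) is adapted to the set $A$ rather than to a single cut point: set $a(t)=\min\bigl(A\cap[t,\infty)\bigr)$ and define $h(t)=g(a(t))$, $l=g-h$. Thus $h=g$ on $A$, while on each complementary interval of $A$ the function $h$ is flattened to the value of $g$ at the right endpoint; $h$ stays non-increasing, $l\geq 0$ is supported in $A^c$, and one checks $f\geq h$ on $A^c$ (otherwise the right endpoint could not belong to $A$). Then $\int_0^t h^p\leq\int_0^t f^p$ follows by splitting at $b(t)=\max\bigl(A\cap[0,t]\bigr)$, and $\int_t^\infty l^q\leq\int_t^\infty f^q$ follows because the first point of $A^c$ after $t$ lies in $B$. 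Your overall intuition (head mass to $h$, tail mass to $l$, driven by the two halves of the $K$-functional) is right, but without the dichotomy $A\cup B=\Rb^+$ and the $A$-adapted flattening, the proof does not go through.
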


\begin{proof}
Let $A := \{ t\in \Rb^+ : \Int{0}{t} f^p \geq \Int{0}{t} g^p \}$ and $B := \{ t\in \Rb^+ : \Int{t}{\infty} f^q \geq \Int{t}{\infty} g^q \}$. $A$ and $B$ are closed sets and $A \cup B = (0,\infty)$, denote $A^c = \Rb^+ \backslash A$. For $t\in \Rb^+$, set $a(t) = \min A \cap [t,\infty)$ ($a(t) = \infty$ if $A \cap [t,\infty) = \emptyset$) and for convenience, $g(\infty) = 0$. 

Define $h: t \mapsto g(a(t))$ and $l = g - h$, $g$ and $l$ are clearly non-negative.

Note that for $t\in A^c$, if $a(t) \neq \infty$ then $f(a(t)) \geq g(a(t))$. Indeed, assume by contradiction that $f(a(t)) < g(a(t))$. By left-continuity of $f$ and $g$ there exists $s \in (t, a(t))$ (in particular, $s\notin A$) such that $f < g$ on $(s,a(t))$. Hence:
$$\int_0^{a(t)} f^p = \int_0^s f^p + \int_s^{a(t)} f^p < \int_0^s g^p + \int_s^{a(t)} g^p = \int_0^{a(t)} g^p,$$
which contradicts the fact that $a(t) \in A$. This implies that $f(t) \geq h(t)$ for any $t\in A^c$. 

Let us now check that the decomposition verifies what we claimed. Let $t\in\Rb^+$ and let $b(t) = \max A \cap [0,t]$ (remark that $0\in A$ so $b$ is well-defined). Clearly $(b(t),t)\subset A^c$ and $ b(t) \in A$. Hence, 
$$\Int{0}{t} h^p = \Int{0}{b(t)} h^p + \Int{b(t)}{t} h^p 
\leq \Int{0}{b(t)} g^p + \Int{b(t)}{t} f^p 
\leq \Int{0}{t} f^p.$$
Finally, set $c(t) = \inf A^c \cap [t,\infty)$ ($c(t) = \infty$ if $A^c \cap [t,\infty) = \emptyset$). Note that since $B \cup A = \Rb^+$, we always have $c(t) \in B$ and remark that $l$ is supported in $A^c$. Hence,
$$\int_t^{\infty} l^p \leq \int_{c(t)}^\infty g^p \leq \int_{c(t)}^{\infty} f^p \leq \int_t^\infty f^p,$$
and since $l$ is positive:
$$\Int{t}{\infty} (l^*)^q \leq \Int{t}{\infty} l^q \leq \Int{t}{\infty} f^q = \Int{t}{\infty} (f^*)^q.$$
\end{proof}

We are now ready to conclude the proof of the main theorem.

\begin{proof}[End of the proof of Theorem \ref{thm:functionspace}]
By Proposition \ref{prop:KmonToInter}, it suffices to prove that $E$ is $K$-monotone. We already know from Lemma \ref{lem:inclusions} that $L_p \cap L_q \subset E \subset L_p + L_q$. Assume that $f \neq 0$. Suppose that for all $t>0$, $K_t(f,L_p,L_q) \geq K_t(g,L_p,L_q)$. By the formula (\ref{formula:KfuncLpLq}) for the $K$-functional of the couple $(L_p,L_q)$, there exists a constant $c>0$ independent of $f$ and $g$ such that for all $t>0$:
$$\p{\Int{0}{t^{\alpha}} (cf^*)^p}^{1/p} + t \p{\Int{t^{\alpha}}{\infty} (cf^*)^q}^{1/q} \geq \p{\Int{0}{t^{\alpha}} (g^*)^p}^{1/p} + t \p{\Int{t^{\alpha}}{\infty} (g^*)^q}^{1/q},$$
So by Lemma \ref{lem:technique:fKg} applied to $cf^*$ and $g^*$, there exists two functions $h$ and $l$ such that $h + l = g^*$, $(cf)^p \succ h^p$ and $(cf)^q \triangleright l^q$. Hence, by the assumptions on $E$, $h,l\in E$ so $g\in E$ and 
$$\Norm{g}_E = \Norm{g^*}_E \les \Norm{h}_E + \Norm{l}_E \leq 2cK\Norm{f}_E.$$
\end{proof} 

We can now obtain the theorem stated in the introduction as a corollary.

\begin{proof}[Proof of Theorem \ref{thm:intro:conjfonc}]
$(1) \Rightarrow (2)$. By corollary \ref{cor:allpmon} there exists $p$ which can be chosen to be less than $q$ such that $E$ is left-$p$-monotone. Hence by Theorem \ref{thm:functionspace}, $E \in Int(L_p,L_q)$.

$(2) \Rightarrow (1)$. This is true by Theorem \ref{thm:functionspace}.
\end{proof}

\begin{corollary} \label{cor:alphaqmon}
Let $E$ be a symmetric quasi-Banach function space. Assume that $\alpha_E \neq 0$. Then, for any $q \in (1/\alpha_E,\infty)$, $E$ is right-$q$-monotone.
\end{corollary}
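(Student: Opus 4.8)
The plan is to deduce the corollary from the Boyd-index interpolation statement Proposition~\ref{prop:boydint} together with the characterisation of Theorem~\ref{thm:functionspace}, in the same spirit as Corollary~\ref{cor:allpmon}. Fix $q\in(1/\alpha_E,\infty)$. By Remark~\ref{rem:beta} we have $\beta_E<\infty$, hence $1/\beta_E>0$ and we may choose an exponent $p$ with $0<p<1/\beta_E$. Since Boyd indices always satisfy $\alpha_E\leq\beta_E$ (this is anyway presupposed by the chain of inequalities in Proposition~\ref{prop:boydint}), we obtain
$$0<p<\tfrac{1}{\beta_E}\leq\tfrac{1}{\alpha_E}<q<\infty.$$
Proposition~\ref{prop:boydint} then shows that $E$ is an interpolation space for the couple $(L_p,L_q)$, and since $p\leq q<\infty$, Theorem~\ref{thm:functionspace} gives that $E$ is left-$p$-monotone and right-$q$-monotone; in particular $E$ is right-$q$-monotone.

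If one prefers a direct argument not relying on Theorem~\ref{thm:functionspace}, one can proceed as follows. Choosing $p<1/\beta_E$ as above, Lemma~\ref{lem : Montgomery} applies and $H_{(q)}$ is bounded on $E$. Let $f\in E$ and $g\in L_0$ with $\md{f}^q\triangleright\md{g}^q$, that is $\Int{t}{\infty}(f^*)^q\geq\Int{t}{\infty}(g^*)^q$ for all $t>0$. Since $g^*$ is non-increasing, for every $s>0$,
$$\frac{s}{2}\,(g^*(s))^q\leq\Int{s/2}{s}(g^*)^q\leq\Int{s/2}{\infty}(g^*)^q\leq\Int{s/2}{\infty}(f^*)^q,$$
whence
$$g^*(s)\leq\p{\frac{2}{s}\Int{s/2}{\infty}(f^*)^q}^{1/q}=H_{(q)}(f)(s/2)=\p{D_2 H_{(q)}(f)}(s).$$
As $E$ carries an increasing quasi-norm and both $D_2$ and $H_{(q)}$ are bounded on $E$ (the former by the computation in the proof of Remark~\ref{rem:beta}), we conclude $g\in E$ and $\Norm{g}_E=\Norm{g^*}_E\leq\Norm{D_2}_{E\to E}\,\Norm{H_{(q)}}_{E\to E}\,\Norm{f}_E$, which is right-$q$-monotonicity.

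I do not anticipate a real obstacle: both routes are short once the earlier results are in hand. The one point requiring a little care is the existence of an admissible exponent $p<1/\beta_E$, which is exactly what Remark~\ref{rem:beta} provides; the inequality $\alpha_E\leq\beta_E$ used in the first route is classical and is in any case built into the hypotheses of Proposition~\ref{prop:boydint}.
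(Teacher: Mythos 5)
Your first route is exactly the paper's proof: Remark~\ref{rem:beta} supplies $p<1/\beta_E$, Proposition~\ref{prop:boydint} gives $E\in Int(L_p,L_q)$, and the reverse implication of Theorem~\ref{thm:functionspace} yields right-$q$-monotonicity. Your second, direct argument via the pointwise bound $g^*\leq D_2H_{(q)}(f)$ and the boundedness of $H_{(q)}$ from Lemma~\ref{lem : Montgomery} is also correct and is in the same spirit as the paper's proof of Proposition~\ref{prop:boydintinfty} for the left-handed analogue; it buys you independence from the Calder\'on-couple machinery behind Theorem~\ref{thm:functionspace}, at no real extra cost.
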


\begin{proof}
By Remark \ref{rem:beta} and Proposition \ref{prop:boydint}, $E \in Int(L_p,L_q)$ for some $p\leq q$.
\end{proof}

\begin{remark}
Let us define, as a convention, that every symmetric space $E$ is right-$\infty$-monotone. Then, Theorem \ref{thm:functionspace} holds for $q = \infty$ and generalises Theorem \ref{thm:LorentzShimogakiquasiBanach}.
\end{remark}

\subsection{An application to convexifications}

Recall that if $E$ is a symmetric function space, the $r$-{\it convexification} of $E$ is given by:
$$E^{(r)} = \set{f \in L_0(0,\alpha) : \md{f}^{r} \in E},\ \text{with} \ \Norm{f}_{E^{(r)}} = \Norm{\md{f}^r}_E.$$
\begin{lemma}\label{lem: monotonicity of convex}
Let $E$ be a symmetric function space. Let $p,q,r\in (0,\infty)$. Then:
\begin{itemize}
\item $E$ is left-$p$-monotone if and only if $E^{(r)}$ is left-$rp$-monotone,
\item $E$ is right-$q$-monotone if and only if $E^{(r)}$ is right-$rq$-monotone.
\end{itemize}
\end{lemma}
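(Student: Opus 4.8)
The plan is to reduce everything to the definitions of the convexification and the majorization relations, using the crucial observation that taking the $r$-th power interacts neatly with both the nonincreasing rearrangement and the majorization orderings. The key algebraic fact is that for $f \in L_0$ one has $(\md{f}^r)^* = (f^*)^r$, so that $\md{f}^r \in E$ if and only if $f^*{}^r \in E$, which is precisely the membership condition for $E^{(r)}$. Similarly, if I write the left-majorization condition for the couple involved, I see that $\md{f}^{rp} \succ \md{g}^{rp}$ (a statement about $E^{(r)}$ being left-$rp$-monotone) is literally the statement that for all $t$, $\int_0^t (f^*)^{rp} \geq \int_0^t (g^*)^{rp}$; and if I set $F = \md{f}^r$, $G = \md{g}^r$ (so $F, G \in L_0$ with $F^* = (f^*)^r$, $G^* = (g^*)^r$), this is exactly $\md{F}^p \succ \md{G}^p$, the condition appearing in the definition of left-$p$-monotonicity of $E$. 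The same translation works verbatim for right-majorization with $q$ in place of $p$.

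First I would set up the dictionary: fix $r \in (0,\infty)$ and note that the map $f \mapsto \md{f}^r$ is a bijection from $E^{(r)}$ onto $E$ (well, onto the positive cone, but distributions are all that matter) which multiplies quasi-norms compatibly, namely $\Norm{\md{f}^r}_E = \Norm{f}_{E^{(r)}}$, and which intertwines the two orderings: $\md{f}^{rp} \succ \md{g}^{rp}$ as an inequality of integrals of rearrangements is the same as $(\md{f}^r)^p \succ (\md{g}^r)^p$. Then I would prove the first bullet. For the forward direction, assume $E$ is left-$p$-monotone with constant $C$; take $f \in E^{(r)}$ and $g \in L_0$ with $\md{f}^{rp} \succ \md{g}^{rp}$. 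Apply left-$p$-monotonicity of $E$ to the pair $\md{f}^r \in E$ and $\md{g}^r \in L_0$ (which satisfy $\md{\md{f}^r}^p \succ \md{\md{g}^r}^p$) to conclude $\md{g}^r \in E$ with $\Norm{\md{g}^r}_E \leq C \Norm{\md{f}^r}_E$; unravelling, $g \in E^{(r)}$ and $\Norm{g}_{E^{(r)}} \leq C \Norm{f}_{E^{(r)}}$, so $E^{(r)}$ is left-$rp$-monotone with the same constant. The reverse direction is identical applied to $(E^{(r)})$ with exponent $rp$ and the convexification exponent $1/r$, using the trivial identity $(E^{(r)})^{(1/r)} = E$ — or simply run the argument symmetrically. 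The second bullet is word-for-word the same with $\triangleright$ replacing $\succ$ and $q$ replacing $p$.

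I do not expect any genuine obstacle here; the statement is essentially a change of variables. The one point requiring a line of care is checking $(\md{f}^r)^* = (f^*)^r$, which follows because $t \mapsto t^r$ is an increasing bijection of $[0,\infty)$ and hence $\nu(\{\md{f}^r > s\}) = \nu(\{\md{f} > s^{1/r}\})$, so the distribution functions correspond under the substitution $s \mapsto s^{1/r}$; this is standard and I would cite it to \cite{KrePetSem82} rather than prove it. The only other thing worth flagging explicitly is that the constant $C$ is preserved (not merely that some constant exists), which is immediate from the argument above and perhaps worth stating, though the definitions as given only ask for existence of a constant.
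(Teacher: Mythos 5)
Your proposal is correct and follows exactly the route the paper takes: translate $\md{f}^{rp}\succ\md{g}^{rp}$ into $(\md{f}^r)^p\succ(\md{g}^r)^p$ via $(\md{f}^r)^*=(f^*)^r$, apply the monotonicity of $E$ to the pair $\md{f}^r,\md{g}^r$, and unravel the definition of $\Norm{\cdot}_{E^{(r)}}$; the paper likewise disposes of the remaining cases with ``checked similarly.'' No gaps.
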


\begin{proof}
Let $f\in E^{(r)}$ and $g\in L_0(0,\alpha)$ such that $\md{f}^{rp} \succ \md{g}^{rp}$. By definition $\md{f}^r \in E$ so by left-$p$-monotonicity of $E$, $\md{g}^r \in E$ and $\Norm{\md{g}^r}_E \les \Norm{\md{f}^r}_E$. This exactly means that $g\in E^{(r)}$ and $\Norm{g}_{E^{(r)}} \les \Norm{f}_{E^{(r)}}$.

The rest of the statement is checked similarly.
\end{proof}

We can now deduce the following corollary, which extends a result of Montgomery-Smith to quasi-Banach spaces. 

\begin{corollary}
Let $p<q \in (0,\infty]$ and $r\in (0,\infty)$. Let $E$ be a symmetric function space. Then the following are equivalent:
\begin{enumerate}
\item $E$ is an interpolation space for the couple $(L_p,L_q)$,
\item $E^{(r)}$ is an interpolation space for the couple $(L_{rp},L_{rq})$.
\end{enumerate}
\end{corollary}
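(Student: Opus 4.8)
The plan is to reduce the statement to Theorem~\ref{thm:functionspace} via the monotonicity characterisation, using Lemma~\ref{lem: monotonicity of convex} to transfer the two monotonicity conditions along the $r$-convexification. Concretely, I would argue as follows. First, handle the case $q<\infty$. By Theorem~\ref{thm:functionspace}, statement (1) is equivalent to the conjunction: $E$ is left-$p$-monotone \emph{and} $E$ is right-$q$-monotone. Applying Lemma~\ref{lem: monotonicity of convex} with exponent $r$, $E$ is left-$p$-monotone iff $E^{(r)}$ is left-$rp$-monotone, and $E$ is right-$q$-monotone iff $E^{(r)}$ is right-$rq$-monotone. Hence (1) holds iff $E^{(r)}$ is both left-$rp$-monotone and right-$rq$-monotone, which by Theorem~\ref{thm:functionspace} applied to the couple $(L_{rp},L_{rq})$ (note $rp<rq$ and $rq<\infty$) is equivalent to (2). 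This is essentially a two-line deduction once the ingredients are in place.

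The case $q=\infty$ needs a small separate treatment since Theorem~\ref{thm:functionspace} as stated requires $q<\infty$, but the excerpt's remark after Corollary~\ref{cor:alphaqmon} says that with the convention that every symmetric space is right-$\infty$-monotone, Theorem~\ref{thm:functionspace} extends to $q=\infty$ and coincides with Theorem~\ref{thm:LorentzShimogakiquasiBanach}. So for $q=\infty$ I would invoke Theorem~\ref{thm:LorentzShimogakiquasiBanach}: (1) is equivalent to $E$ being left-$p$-monotone, which by Lemma~\ref{lem: monotonicity of convex} is equivalent to $E^{(r)}$ being left-$rp$-monotone, which again by Theorem~\ref{thm:LorentzShimogakiquasiBanach} (applicable since $rp>0$, and for $rp<1$ directly; for $rp\geq 1$ one still has the classical Lorentz–Shimogaki statement from Theorem~\ref{thm:Kmon for Lp literature} together with the reverse implication already noted) is equivalent to $E^{(r)}$ being an interpolation space for $(L_{rp},L_\infty)$, i.e. (2). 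Alternatively one can just point out that the $q=\infty$ case follows formally from the convention and the extended form of Theorem~\ref{thm:functionspace}, which is cleaner; I would phrase it that way to avoid case distinctions.

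The only genuine point to watch — and the closest thing to an obstacle — is ensuring that Lemma~\ref{lem: monotonicity of convex} is being applied with matching exponents: its statement gives the equivalences with the \emph{same} dilation exponent $r$ on both the $E$/$E^{(r)}$ sides and the $p$/$rp$ (resp.\ $q$/$rq$) sides, which is exactly the bookkeeping needed here, so no real difficulty arises. I would also remark in passing that the implicit constants in the interpolation property are not preserved exactly but only up to factors depending on $p,q,r$, consistent with the $\les$-level statements in Theorem~\ref{thm:functionspace}; since ``interpolation space'' here is understood up to equivalence of constants this is harmless. In short, the proof is a direct chaining of Lemma~\ref{lem: monotonicity of convex} and Theorem~\ref{thm:functionspace} (with its $q=\infty$ extension), and I expect it to be short.

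\begin{proof}
We treat the case $q<\infty$; the case $q=\infty$ is identical, using Theorem~\ref{thm:LorentzShimogakiquasiBanach} (equivalently, the extension of Theorem~\ref{thm:functionspace} to $q=\infty$ noted above) in place of Theorem~\ref{thm:functionspace}. By Theorem~\ref{thm:functionspace}, condition (1) is equivalent to $E$ being simultaneously left-$p$-monotone and right-$q$-monotone. By Lemma~\ref{lem: monotonicity of convex}, $E$ is left-$p$-monotone if and only if $E^{(r)}$ is left-$rp$-monotone, and $E$ is right-$q$-monotone if and only if $E^{(r)}$ is right-$rq$-monotone. Hence (1) holds if and only if $E^{(r)}$ is both left-$rp$-monotone and right-$rq$-monotone. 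Since $0<rp<rq<\infty$, Theorem~\ref{thm:functionspace} applied to the couple $(L_{rp},L_{rq})$ shows that this is in turn equivalent to $E^{(r)}$ being an interpolation space for $(L_{rp},L_{rq})$, that is, to condition (2).
\end{proof}
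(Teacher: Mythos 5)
Your proof is correct and is essentially identical to the paper's own (one-line) argument: for $q<\infty$ combine Lemma \ref{lem: monotonicity of convex} with Theorem \ref{thm:functionspace}, and for $q=\infty$ use Theorem \ref{thm:LorentzShimogakiquasiBanach} instead. Your extra care about the $rp\geq 1$ case of the Lorentz--Shimogaki statement is a reasonable point the paper glosses over, but it does not change the substance of the argument.
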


\begin{proof}
If $q\neq 0$, this is immediate by Lemma \ref{lem: monotonicity of convex} and Theorem \ref{thm:functionspace}. If $q=0$, this is also direct using Lemma \ref{lem: monotonicity of convex} and Theorem \ref{thm:LorentzShimogakiquasiBanach}.
\end{proof}

\subsection{Sequence spaces}

We will now prove the main theorem in the context of symmetric sequence spaces rather than symmetric function spaces. We will use the fact that $\ell^{\infty}$ can be embedded into $L_{\infty}$ by:
$$u \mapsto \Sum{i=1}{\infty} u_i\Ind_{[i-1,i)}.$$
Hence, we will identify $\ell^{\infty}$ with the subalgebra of $L_{\infty}$ of functions a.e. constant on intervals of the form $[i,i+1)$, $i\in\Nb$. Denote by $\Eb$ the conditional expectation from $L_{\infty}$ to $\ell^{\infty}$. The notions of left and right majorization can be extended without modifications to sequences and coincide with the usual notion. Let $u$ and $v$ in $\ell^{\infty}$:
\begin{center}
$u\succ v \Leftrightarrow \forall n\in\Nb, \Sum{i=1}{n} u_i^* \geq \Sum{i=1}{n} v_i^*$ and 
$u\triangleright v \Leftrightarrow \forall n\in\Nb, \Sum{i=n}{\infty} u_i^* \geq \Sum{i=n}{\infty} v_i^*.$
\end{center}
Let us now state the result of this subsection: 

\begin{theorem}\label{thm:sequencespace}
Let $0<p\leq q<\infty$ and $E$ be a quasi-Banach symmetric sequence space. If $E$ is left-$p$-monotone and right-$q$-monotone then $E$ is an interpolation space between $\ell_p$ and $\ell_q$.
\end{theorem}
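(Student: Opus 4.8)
The plan is to transfer the statement to the already-proven function-space case (Theorem~\ref{thm:functionspace}, more precisely the end of its proof) by means of two ``transfer'' maps. Write $r=(p^{-1}-q^{-1})^{-1}$, let $J\colon\ell^\infty\to L_\infty(0,\infty)$ be the embedding $Jv=\sum_i v_i\Ind_{[i-1,i)}$ (isometric from $\ell_p$ into $L_p(0,\infty)$ and from $\ell_q$ into $L_q(0,\infty)$), and define $P\colon L_0(0,\infty)\to[0,\infty]^{\Nb}$ by $P(f)_i=\big(\int_{i-1}^i|f|^p\big)^{1/p}$. Since each interval has measure one and $p\le q$, one records the elementary facts $\|P(f)\|_{\ell_p}=\|f\|_{L_p}$, $\|P(f)\|_{\ell_q}\le\|f\|_{L_q}$ (using $\|\cdot\|_{L_p(0,1)}\le\|\cdot\|_{L_q(0,1)}$), hence, by decomposing and applying the quasi-triangle inequality on each $L_p(i-1,i)$, $K_t(P(f);\ell_p,\ell_q)\les K_t(f;L_p,L_q)$ with constant depending only on $p$; also $K_t(Jv;L_p,L_q)\le K_t(v;\ell_p,\ell_q)$ trivially, and $P(Jv)=(|v_i|)_i$, so $\|P(Jv)\|_E=\|v\|_E$ and $K_t(P(Jv);\ell_p,\ell_q)=K_t(v;\ell_p,\ell_q)$.

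The core is to show $E$ is $K$-monotone for $(\ell_p,\ell_q)$; then Proposition~\ref{prop:KmonToInter} finishes the proof, the inclusions $\ell_p\cap\ell_q\subset E\subset\ell_p+\ell_q$ being obtained as in Lemma~\ref{lem:inclusions}. So let $v_0\in E$ and $v_1\in\ell_p+\ell_q$ with $K_t(v_1;\ell_p,\ell_q)\le K_t(v_0;\ell_p,\ell_q)$ for all $t>0$. Chaining the estimates above,
\[
K_t(Jv_1;L_p,L_q)\le K_t(v_1;\ell_p,\ell_q)\le K_t(v_0;\ell_p,\ell_q)=K_t(P(Jv_0);\ell_p,\ell_q)\les K_t(Jv_0;L_p,L_q).
\]
Using formula~\eqref{formula:KfuncLpLq} and absorbing the Holmstedt constants into a factor $c$ (exactly as at the end of the proof of Theorem~\ref{thm:functionspace}), the functions $f:=c\,(Jv_0)^*$ and $g:=(Jv_1)^*$ satisfy the hypothesis of Lemma~\ref{lem:technique:fKg}, which produces non-negative $h,l$ with $h+l=(Jv_1)^*$, $h$ non-increasing, $\int_0^t f^p\ge\int_0^t h^p$ for all $t$, and $\int_t^\infty f^q\ge\int_t^\infty l^q$ for all $t$.

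Now push this decomposition back to sequences using $L_p$-averages for $h$ and $L_q$-averages for $l$: set $H:=P(h)$, i.e.\ $H_i=\big(\int_{i-1}^i h^p\big)^{1/p}$, and $L_i:=\big(\int_{i-1}^i l^q\big)^{1/q}$. Then $H$ is non-increasing and $\sum_{i\le n}H_i^p=\int_0^n h^p\le\int_0^n f^p=\sum_{i\le n}(cv_0^*)_i^p$, so $|cv_0|^p\succ|H|^p$ and left-$p$-monotonicity gives $H\in E$, $\|H\|_E\les\|v_0\|_E$; likewise $\sum_{i\ge n}L_i^q=\int_{n-1}^\infty l^q\le\int_{n-1}^\infty f^q=\sum_{i\ge n}(cv_0^*)_i^q$, which with $\sum_{i\ge n}(L^*)_i^q\le\sum_{i\ge n}L_i^q$ yields $|cv_0|^q\triangleright|L|^q$, so right-$q$-monotonicity gives $L\in E$, $\|L\|_E\les\|v_0\|_E$. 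Finally, $(Jv_1)^*$ is constant equal to $(v_1^*)_i$ on $(i-1,i)$, so, since $p\le q$,
\[
(v_1^*)_i=\Big(\int_{i-1}^i(h+l)^p\Big)^{1/p}\les\Big(\int_{i-1}^i h^p\Big)^{1/p}+\Big(\int_{i-1}^i l^p\Big)^{1/p}\le H_i+L_i ,
\]
whence $v_1^*\le C(H+L)$ coordinatewise and $\|v_1\|_E=\|v_1^*\|_E\les\|H\|_E+\|L\|_E\les\|v_0\|_E$, proving $K$-monotonicity.

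The delicate point is the choice of the two transfer maps and especially the \emph{asymmetric} averaging: taking $L_p$-averages of $h$ is what keeps it comparable to $v_0$ in the left-majorization sense, taking $L_q$-averages of $l$ is what keeps it comparable in the right-majorization sense, and the embedding $\|\cdot\|_{L_p(0,1)}\le\|\cdot\|_{L_q(0,1)}$ is what lets one recover $v_1^*$ from $H+L$; forcing all three requirements at once is the obstacle, and it is precisely where naive alternatives fail (a single averaging, or building an auxiliary symmetric function space directly from $E$, breaks down when $p<1$ or when $\beta_E$ is large, because then operators like $H^{(p)}$ need not be bounded on $E$). A secondary check is that the $K$-functional comparison comes out in the direction needed to invoke Lemma~\ref{lem:technique:fKg}.
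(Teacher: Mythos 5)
Your proof is correct and follows essentially the same route as the paper: embed sequences as unit-interval step functions, compare $K$-functionals via Holmstedt's formula, apply Lemma~\ref{lem:technique:fKg} to $c(Jv_0)^*$ and $(Jv_1)^*$, and then push the decomposition back to sequences by taking $L_p$-averages of $h$ and $L_q$-averages of $l$ over unit intervals (your $H$ and $L$ are exactly the paper's $\overline{h}=\Eb(h^p)^{1/p}$ and $\overline{l}=\Eb(l^q)^{1/q}$), recovering $v_1^*\les H+L$ from $p\le q$. The only cosmetic difference is that you derive the equivalence of the sequence and function $K$-functionals through the explicit transfer maps $J$ and $P$, where the paper simply invokes the validity of the Holmstedt formula for sequences.
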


\begin{proof}
By Proposition \ref{prop:KmonToInter}, it suffices to check that $E$ is $K$-monotone. Similarly to the case of function spaces, the hypothesis of the theorem imply that $\ell^p = \ell^p \cap \ell^q \subset E \subset \ell^p + \ell^q = \ell^q$ and that for any $u\in E$ and $v\in \ell^q$:
\begin{center}
$u^q \triangleright v^q \Rightarrow v\in E, \Norm{v}_E \leq C\Norm{u}_E$ and $u^p \succ v^p \Rightarrow v\in E, \Norm{v}_E \leq C\Norm{u}_E.$
\end{center}
Let $u\in E$ and $v\in \ell^q$, $u\neq 0$. Assume that for all $t>0,K_t(u,\ell^p,\ell^q) \leq K_t(v,\ell^p,\ell^q)$. The formula for the $K$-functional used previously still holds in this context. Indeed, for any sequence $w\in\ell^q$ and $t>0$:
$$K_t(w,p,q) \approx \p{\Int{0}{t^{\alpha}} (w^*)^p}^{1/p} + t \p{\Int{t^{\alpha}}{\infty} (w^*)^q}^{1/q}.$$
So there exists a constant $c$ independent of $u$ and $v$ so that for all $t>0$:
$$\p{\Int{0}{t^{\alpha}} (cu^*)^p}^{1/p} + t \p{\Int{t^{\alpha}}{\infty} (cu^*)^q}^{1/q} \geq \p{\Int{0}{t^{\alpha}} (v^*)^p}^{1/p} + t \p{\Int{t^{\alpha}}{\infty} (v^*)^q}^{1/q},$$
By Lemma \ref{lem:technique:fKg}, there exists nonegative functions $h$ and $l$ such that, $h + l = v^*$, $h$ is non-increasing, $(cu)^p \succ h^p$ and $(cu)^q \triangleright l^q$. Let $\overline{h} = \Eb(h^p)^{1/p}$ and $\overline{l} = \Eb(l^q)^{1/q}$. 

We will start by showing that $(cu)^p \succ \overline{h}^p$ and $(cu)^q \triangleright \overline{l}^q$.
The functions:
\begin{center}
$t \mapsto \Int{0}{t} (cu^*)^p$ and $t \mapsto \Int{0}{t} \overline{h}^p$
\end{center}
are affine on intervals of the form $[i,i+1]$, $i\in\Nb$. So it suffices to check the inequality at integers. Let $n\in \Nb$,
$$\Int{0}{n} (cu^*)^p \geq \Int{0}{n} h^p = \Int{0}{n} \overline{h}^p.$$
The other inequality is proven in the same manner. 

Recall that $\overline{h}$ and $\overline{l}$ can be seen as sequences. Note that $a_p := \max(1,2^{p-1})$ is the optimal constant such that for all $a,b\in\Rb^+$, $(a+b)^p \leq a_p(a^p + b^p)$. We have:
$$v^* = \Eb((v^*)^p)^{1/p} \leq a_p\Eb(h^p + l^p)^{1/p} \leq a_pa_{1/p}\p{\Eb(h^p)^{1/p} + \Eb(l^p)^{1/p}} 
\leq  a_pa_{1/p}(\overline{h} + \overline{l}),$$
where we used the fact that $\Eb(l^p)^{1/p} \leq \Eb(l^q)^{1/q}$ since $q\geq p$. By hypothesis on $E$, $\overline{h}\in E$ and $\overline{l}\in E$ with 
\begin{center}
$\Norm{\overline{h}}_E \les \Norm{u}_E$ and $\Norm{\overline{l}}_E \les \Norm{u}_E$.
\end{center}
So $v\in E$ and $\Norm{v}_E \les \Norm{u}_E$ with an implied constant depending on $E$, $p$ and $q$.
\end{proof}

\begin{remark}
As was pointed out to me by M. Cwikel, the couple $(\ell^p,\ell^q)$ is not known in general to be a Calder\'on couple. This is why we cannot show the reverse implication in Theorem \ref{thm:sequencespace}. However, for $1\leq p\leq q \leq \infty$, the couple $(\ell^p,\ell^q)$ is a Calder\'on couple (\cite{Spa78}) and we can recover the following:
\begin{corollary}
Let $1\leq p\leq q<\infty$ and $E$ be a quasi-Banach symmetric sequence space. Then $E$ is left-$p$-monotone and right-$q$-monotone if and only if $E$ is an interpolation space between $\ell_p$ and $\ell_q$.
\end{corollary}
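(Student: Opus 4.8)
The plan is to obtain the direct implication from Theorem~\ref{thm:sequencespace} and to prove the converse by reducing both monotonicity properties to $K$-functional comparisons, following the strategy of the function-space proof. So suppose $E\in Int(\ell^p,\ell^q)$; we may assume $p<q$, the degenerate case $p=q$ reducing to $E=\ell^p$ and being handled directly (it is immediate that $\ell^p$ is left- and right-$p$-monotone, and the converse implication is then Theorem~\ref{thm:sequencespace}). Two preliminary facts will be used. First, identifying a bounded sequence $w$ with the step function $w^*=\sum_{i\geq 1}w_i^*\Ind_{[i-1,i)}$, the maps $t\mapsto\Int{0}{t}(w^*)^p$ and $t\mapsto\Int{t}{\infty}(w^*)^q$ are affine on each interval $[i-1,i]$; since the defining inequalities of sequence majorization hold at the integers, they propagate by convexity, so that $u^p\succ v^p\Rightarrow(u^*)^p\succ(v^*)^p$ and $u^q\triangleright v^q\Rightarrow(u^*)^q\triangleright(v^*)^q$ as functions. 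Second, as already used in the proof of Theorem~\ref{thm:sequencespace}, $K_t(\cdot,\ell^p,\ell^q)$ and $K_t(\cdot,\ell^p,\ell^\infty)$ are, up to constants depending only on $p,q$, given by the formulas \eqref{formula:KfuncLpLq} and \eqref{formula:KtLpLinfty} applied to $w^*$.

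To show that $E$ is right-$q$-monotone, note that since $1\leq p\leq q<\infty$ the couple $(\ell^p,\ell^q)=(L_p(\Nb),L_q(\Nb))$ is a Calder\'on couple by Sparr's theorem (Theorem~\ref{thm:Kmon for Lp literature}), so $E$ is $K$-monotone for it. Let $u\in E$ and $v\in L_0(\Nb)$ with $u^q\triangleright v^q$. Taking $n=1$ in the definition gives $\Norm{v}_q\leq\Norm{u}_q<\infty$, hence $v\in\ell^q=\ell^p+\ell^q$, and $u^*,v^*$ are bounded step functions with $(u^*)^q,(v^*)^q\in L_1(0,\infty)$. By the first preliminary fact $(u^*)^q\triangleright(v^*)^q$ as functions, and the estimates used to prove right-$q$-monotonicity in the reverse implication of Theorem~\ref{thm:functionspace} --- which only invoke Lemma~\ref{lem:holder}, formula \eqref{formula:KfuncLpLq}, and the Calder\'on property of the function couple $(L_p,L_q)$ --- apply verbatim to $u^*,v^*$ and yield $K_t(v^*,L_p,L_q)\les K_t(u^*,L_p,L_q)$ for all $t>0$. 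By the second preliminary fact there is $c=c(p,q)$ with $K_t(v,\ell^p,\ell^q)\leq c\,K_t(u,\ell^p,\ell^q)=K_t(cu,\ell^p,\ell^q)$ for all $t$, so $K$-monotonicity of $E$ gives $v\in E$ with $\Norm{v}_E\les\Norm{u}_E$.

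To show that $E$ is left-$p$-monotone, I would first pass to the couple $(\ell^p,\ell^\infty)$. The Lorentz space $\ell^q=(\ell^p,\ell^\infty)_{1-p/q,\,q}$ is an interpolation space for $(\ell^p,\ell^\infty)$, so every operator on $\ell^\infty$ bounded on $\ell^p$ and on $\ell^\infty$ is bounded on $\ell^q$, and hence on $E$, because $E\in Int(\ell^p,\ell^q)$ and $\ell^p=\ell^p\cap\ell^\infty\subset E\subset\ell^q\subset\ell^\infty=\ell^p+\ell^\infty$; thus $E\in Int(\ell^p,\ell^\infty)$. Since $(\ell^p,\ell^\infty)$ is a Calder\'on couple (Cwikel, Theorem~\ref{thm:Kmon for Lp literature}), $E$ is $K$-monotone for it. Now let $u\in E$ and $v\in L_0(\Nb)$ with $u^p\succ v^p$; then $v$ is bounded, so $v\in\ell^\infty=\ell^p+\ell^\infty$, and by the first preliminary fact $(u^*)^p\succ(v^*)^p$, i.e.\ $\Int{0}{s}(v^*)^p\leq\Int{0}{s}(u^*)^p$ for all $s>0$. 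By the second preliminary fact and \eqref{formula:KtLpLinfty} this gives $K_t(v,\ell^p,\ell^\infty)\leq c'\,K_t(u,\ell^p,\ell^\infty)=K_t(c'u,\ell^p,\ell^\infty)$ with $c'=c'(p)$, and $K$-monotonicity of $E$ for $(\ell^p,\ell^\infty)$ then yields $v\in E$ with $\Norm{v}_E\les\Norm{u}_E$. Together with the previous step this shows $E$ is left-$p$-monotone and right-$q$-monotone, completing the proof of the converse (the direct implication being Theorem~\ref{thm:sequencespace}).

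The one step I expect to require genuine care is the reiteration in the left-$p$-monotone part: verifying that $E\in Int(\ell^p,\ell^q)$ and $\ell^q\in Int(\ell^p,\ell^\infty)$ together force $E\in Int(\ell^p,\ell^\infty)$, keeping the chain $\ell^p\subset E\subset\ell^q\subset\ell^\infty$ straight so that restrictions of operators are well defined and the interpolation constants compose correctly. (This is also where the hypothesis $p<q$ enters, through the identification of $\ell^q$ as a proper interpolation space of $(\ell^p,\ell^\infty)$.) Everything else should be routine: the translation between sequence and function majorization described above, the observation that $v$ automatically lies in $\ell^q$, respectively $\ell^\infty$, and the bookkeeping of the multiplicative constants in the $K$-functional formulas, all parallel to the proofs of Theorems~\ref{thm:functionspace} and~\ref{thm:sequencespace}.
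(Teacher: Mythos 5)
Your proposal is correct and follows essentially the same route as the paper: the direct implication is Theorem~\ref{thm:sequencespace}, and the converse rests on Sparr's theorem that $(\ell^p,\ell^q)$ is a Calder\'on couple for $1\leq p\leq q$ together with the translation of the two majorization conditions into $K$-functional comparisons. The paper records this only as a short remark, and the details you supply (reiteration to $(\ell^p,\ell^\infty)$ plus Cwikel's theorem for left-$p$-monotonicity, and the transfer of the Hiai/H\"older estimate to step functions for right-$q$-monotonicity) are exactly the sequence-space analogues of the arguments already given for Theorem~\ref{thm:functionspace}.
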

This is what was obtained in \cite{CwiNil18} by Cwikel and Nilsson. 
\end{remark}

\section{A link between $p$-monotonicity and $p$-convexity}\label{5}

In this subsection we prove some lemmas linking $p$-convexity (resp. $q$-concavity) and left-$p$-monotonicity (resp. right-$q$-monotonicity). First, we show that $p$-convexity implies some weak form of the left-$p$-monotonicity. Then, we show that under the assumption that $E$ has the Fatou property, this weak form of left-$p$-monotonicity is, in fact, equivalent to left-$p$-monotonicity by a simple approximation argument. 

Recall that $E$ is said to be $p$-convex with constant $C$ if for all $n\in\Nb$ and $(x_i)_{i\leq n} \in E^n$,
$$\Norm{\p{\Sum{i=1}{n} \md{x_i}^p}^{1/p}}_E \leq C \p{\Sum{i=1}{n} \Norm{x_i}_E^p}^{1/p}.$$
Similarly, $E$ is said to be $q$-concave with constant $C$ if for all $n\in\Nb$ and $(x_i)_{i\leq n} \in E^n$,
$$\p{\Sum{i=1}{n} \Norm{x_i}_E^q}^{1/q} \leq C \Norm{\p{\Sum{i=1}{n} \md{x_i}^q}^{1/q}}_E.$$

We will denote by $F$ the space of dyadic step functions:
$$F := \{ f\in L_{\infty}(0,\infty) : \exists n\in\Nb, \exists N\in\Nb, \exists a\in(\Rb^+)^N, f = \Sum{i=1}{N}a_i\Ind_{[(i-1)2^{-n},i2^{-n})}\}.$$
Note that this space will also be useful in section \ref{section:khintchine}.

\begin{lemma}\label{lem:technique:fmaj g with norm f = norm g}
Let $f,g\in F$ be non-increasing and $p\in (0,\infty)$. 
\begin{enumerate}
\item If $f^p \succ g^p$ then there exists $h\in F$ such that $h\geq g$, $f^p \succ h^p$ and $\Norm{f}_p = \Norm{h}_p$.
\item If $f^p \triangleright g^p$ then there exists $h\in F$ such that $h\geq g$, $f^p \triangleright h^p$ and $\Norm{f}_p = \Norm{h}_p$.
\end{enumerate}
\end{lemma}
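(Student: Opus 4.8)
The plan is to handle both parts by writing down an explicit enlargement $h$ of $g$, supported on a (possibly much finer) dyadic grid, and then checking the two majorization conditions directly. Throughout, since $f$ and $g$ are non-increasing we have $(f^p)^*=f^p$ and $(g^p)^*=g^p$, so the hypothesis of (1) reads $\Int{0}{t} f^p\geq\Int{0}{t} g^p$ for all $t>0$ and that of (2) reads $\Int{t}{\infty} f^p\geq\Int{t}{\infty} g^p$ for all $t>0$. Letting $t\to 0$ (resp. $t\to\infty$) gives $\Norm{f}_p\geq\Norm{g}_p$ in both cases, so $D:=\Norm{f}_p^p-\Norm{g}_p^p\geq 0$; we may assume $D>0$, otherwise $h=g$ already works. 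I would fix a dyadic rational $\alpha_0$ strictly larger than the lengths of the supports of $f$ and of $g$ (and on a grid at least as fine as those of $f$ and $g$), so that $\Int{0}{\alpha_0} f^p=\Norm{f}_p^p$ and $\Int{0}{\alpha_0} g^p=\Norm{g}_p^p$, and record that $\Phi\colon t\mapsto\Int{0}{t} f^p$ is concave (its a.e.\ derivative $f^p$ is non-increasing) with $\Phi(0)=0$.

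For (1), set $h:=\max\p{g,\gamma\Ind_{(0,\alpha_0)}}$ where $\gamma\geq 0$ is chosen so that $\Norm{h}_p=\Norm{f}_p$; such a $\gamma$ exists because $\gamma\mapsto\Int{0}{\alpha_0}\max(g^p,\gamma^p)$ is continuous, non-decreasing, equals $\Norm{g}_p^p\leq\Norm{f}_p^p$ at $\gamma=0$, and tends to $\infty$. Then $h\in F$, $h\geq g$, and $h$ is non-increasing (a maximum of two non-increasing functions on $(0,\alpha_0)$). It remains to verify $f^p\succ h^p$, i.e.\ $\Phi(t)\geq\Int{0}{t} h^p$ for all $t$. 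Write $[0,t_0)$ for the initial segment where $g^p\geq\gamma^p$ (so $t_0<\alpha_0$, since $g$ vanishes near $\alpha_0$ and $\gamma>0$). For $t\leq t_0$ one has $h^p=g^p$ on $[0,t]$ and the inequality is the hypothesis; for $t\geq\alpha_0$ both sides equal $\Norm{f}_p^p$. For $t_0<t<\alpha_0$ the affine function $s\mapsto\Int{0}{s} h^p$ lies below $\Phi$ at $s=t_0$ (there $\Int{0}{t_0} h^p=\Int{0}{t_0} g^p\leq\Phi(t_0)$) and equals $\Phi$ at $s=\alpha_0$, hence it stays below the chord of $\Phi$ over $[t_0,\alpha_0]$, which by concavity stays below $\Phi$.

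For (2), the obvious idea of putting all the excess mass $D$ on the first interval of the common grid of $f$ and $g$ fails, because the resulting $h$ may drop below $f$ near $0$ and break $f^p\triangleright h^p$ there; the cure is to put it on a much shorter interval. Choose a dyadic rational $w>0$ with $w\leq D/\Norm{f}_\infty^p$ and small enough that $g$ is constant on $(0,w)$, and set $h:=\p{g^p+\tfrac{D}{w}\Ind_{(0,w)}}^{1/p}$. Then $h\in F$, $h\geq g$, $h$ is non-increasing, and $\Norm{h}_p^p=\Norm{g}_p^p+D=\Norm{f}_p^p$. Since $f$ and $h$ are non-increasing with equal $L_p$-norm, $f^p\triangleright h^p$ is equivalent to $\Int{0}{t} h^p\geq\Phi(t)$ for all $t>0$. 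If $t\geq w$ then $\Int{0}{t} h^p=\Int{0}{t} g^p+D$ and the inequality rearranges to $\Int{t}{\infty}(f^p-g^p)\geq 0$, which is the hypothesis; if $t\leq w$ then $\Int{0}{t} h^p\geq\tfrac{D}{w}\,t\geq\Norm{f}_\infty^p\,t\geq\Int{0}{t} f^p$ by the choice of $w$ and $f^p\leq\Norm{f}_\infty^p$.

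I expect part (2) to be the only real obstacle: one must notice that $f^p\triangleright h^p$ has to hold at \emph{every} $t>0$, not just at the nodes of a fixed grid, so $h$ genuinely has to be designed on an arbitrarily fine dyadic grid, and the bound $w\leq D/\Norm{f}_\infty^p$ is precisely what keeps $\Int{0}{t} h^p$ ahead of the at-most-linear growth of $\Int{0}{t} f^p$ for small $t$. In (1) the only delicate point is the chord/concavity comparison; guessing the right $h$ (flatten $g$ from below up to a constant level) and checking $h\in F$, monotonicity and the norm condition are all routine.
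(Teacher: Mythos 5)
Your constructions coincide with the ones sketched in the paper (for (1), flatten $g$ from below to a constant level on a bounded dyadic interval and tune the level so the norms match; for (2), add a constant bump to $g^p$ on an initial dyadic interval), and you supply the verifications the paper explicitly leaves to the reader; the concavity/chord argument in (1) and the reduction of $\triangleright$ to $\int_0^t h^p \geq \int_0^t f^p$ via equality of norms in (2) are both correct. The only (harmless) deviation is in (2): you guarantee $f^p \triangleright h^p$ near $0$ by taking the interval short enough that the bump's density $D/w$ dominates $\Norm{f}_\infty^p$, whereas the paper instead takes an interval on which $f$ and $g$ are both constant — either choice works.
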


\begin{proof}
We believe that it is not difficult to convince oneself that this lemma is true. We provide a possible construction for $h$ in both cases and leave the details to the reader. Note the by considering $f^p$ and $g^p$, we can suppose that $p=1$. 

$(1)$. Fix $a\in (0,\alpha]$, $a < \infty$ and dyadic, such that $f$ and $g$ are supported in $(0,a)$. For any $s \geq 0$, define,
$$h_s \in F:  t \mapsto \max (g(t),s) \Ind_{[0,\alpha)}.$$
Clearly, $h_s$ is non-increasing and $h_s \geq g$. Choose the unique $s_0$ such that $\Norm{h_{s_0}}_1 = \Norm{f}_1$. It can be checked that $f \succ h_{s_0}$. Hence, $h_{s_0}$ satisfies the conditions of the lemma. 

$(2)$. Since $f,g\in F$, we can choose a dyadic $a\leq \alpha$ such that $f$ and $g$ are constant on $(0,a)$. For any $s \geq 0$, define,
$$h_s = g + s\Ind_{[0,a)} \in F,$$
and proceed as in $(1)$.
\end{proof}

\begin{remark}
Let $p\in (0,\infty)$. If $\Norm{f}_p = \Norm{g}_p < \infty$ then $\md{f}^p \succ \md{g}^p \Leftrightarrow \md{g}^p \triangleright \md{f}^p$.
\end{remark}

\subsection{Convexity implies left-monotonicity}

Our first lemma is a direct consequence of the geometric form of the Schur-Horn theorem (Theorem \ref{thm:schurhorn}).

\begin{lemma}
Let $E$ be a quasi-Banach symmetric function space, $p$-convex with constant $C$. Let $f,g \in F$. Then, 
$$f^p \succ g^p \Rightarrow \Norm{g}_E \leq C\Norm{f}_E.$$ 
\end{lemma}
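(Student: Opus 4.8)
The plan is to reduce the statement, as the final sentence of the lemma's hypotheses suggests, to the geometric (convex hull) form of the Schur–Horn theorem applied to finitely many discretized values. Since $f,g\in F$, both are dyadic step functions, and by rescaling we may assume they live on intervals of the form $[(i-1)2^{-n},i2^{-n})$ for a common $n$ and $i=1,\dots,N$; so $f$ and $g$ are each determined by a finite vector in $(\Rb^+)^N$, say $a=(a_i)$ and $b=(b_i)$, with $a$ non-increasing. The condition $f^p\succ g^p$ says precisely that, writing $a^{(p)}=(a_i^p)$ and $b^{(p)}=(b_i^p)$ and passing to non-increasing rearrangements, the partial sums of $a^{(p)}$ dominate those of $b^{(p)}$.

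First I would handle the case of equal $L_p$-norms: if $\Norm{f}_p=\Norm{g}_p$, then $\sum a_i^p=\sum b_i^p$, so the hypotheses of Theorem~\ref{thm:schurhorn} are met by the vectors $a^{(p)}$ and $b^{(p)}$ (after reindexing so both are non-increasing; majorization is invariant under rearrangement of $g$). The first bullet of the Schur–Horn theorem then gives that $b^{(p)}$ lies in the convex hull of $\{(a^{(p)}_{\sigma(1)},\dots,a^{(p)}_{\sigma(N)}):\sigma\in S_N\}$, i.e. $b_i^p=\sum_\sigma \lambda_\sigma a_{\sigma(i)}^p$ for some weights $\lambda_\sigma\ge 0$ summing to $1$. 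Translating back to functions on $(0,\alpha)$: $g^p=\sum_\sigma \lambda_\sigma (f\circ\pi_\sigma)^p$ where $f\circ\pi_\sigma$ is the rearrangement of $f$ permuting the dyadic blocks according to $\sigma$, so in particular $\Norm{f\circ\pi_\sigma}_E=\Norm{f}_E$ for every $\sigma$. Now $p$-convexity with constant $C$ gives
\[
\Norm{g}_E=\Norm{\Big(\sum_\sigma \lambda_\sigma |\lambda_\sigma^{-1/p}(f\circ\pi_\sigma)|^p\,?\Big)^{1/p}}_E,
\]
so more cleanly: $g=\big(\sum_\sigma (\lambda_\sigma^{1/p} f\circ\pi_\sigma)^p\big)^{1/p}$, and applying the $p$-convexity inequality to the finite family $(x_\sigma)=(\lambda_\sigma^{1/p}f\circ\pi_\sigma)$ yields $\Norm{g}_E\le C\big(\sum_\sigma \lambda_\sigma\Norm{f\circ\pi_\sigma}_E^p\big)^{1/p}=C\Norm{f}_E$, since $\sum_\sigma\lambda_\sigma=1$.

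For the general case $\Norm{g}_p\le\Norm{f}_p$ (which is implied by $f^p\succ g^p$, letting $t\to\infty$ when $\alpha<\infty$; when $\alpha=\infty$ one argues on the support), I would invoke Lemma~\ref{lem:technique:fmaj g with norm f = norm g}(1): there is $h\in F$ with $h\ge g$, $f^p\succ h^p$, and $\Norm{f}_p=\Norm{h}_p$. The equal-norm case gives $\Norm{h}_E\le C\Norm{f}_E$, and since $E$ is a symmetric space with an increasing quasi-norm and $g^*\le h^*$ pointwise (as $h\ge g\ge 0$ and both non-increasing), $\Norm{g}_E\le\Norm{h}_E\le C\Norm{f}_E$. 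The main obstacle I anticipate is purely bookkeeping: making the reindexing/common-refinement step precise so that $g$ (or its rearrangement, which has the same $E$-norm) really is expressed as a convex combination of permutations of $f$ at the level of functions rather than vectors, and checking the interplay of the various $1/p$-powers and the constant $a_p$ is not needed here since we only use $p$-convexity in its stated form. No genuinely hard analytic point arises beyond Schur–Horn itself.
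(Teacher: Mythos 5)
Your proposal is correct and follows essentially the same route as the paper: reduce to the equal-$L_p$-norm case via Lemma~\ref{lem:technique:fmaj g with norm f = norm g} and monotonicity of the quasi-norm, apply the convex-hull form of the Schur--Horn theorem to the vectors of dyadic block values, and conclude with $p$-convexity applied to the family $(\lambda_\sigma^{1/p} f_\sigma)_\sigma$. The only blemish is the garbled intermediate display (the one containing the stray ``?''), which you immediately correct; the final argument is the paper's.
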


\begin{proof}
Let $f,g\in F$ suppose that $f^p\succ g^p$. Since $\Norm{.}_E$ is increasing, we can also assume that $\Norm{f}_p = \Norm{g}_p$ by Lemma \ref{lem:technique:fmaj g with norm f = norm g}. Since $f$ and $g$ belong to $F$, there exist $N,n\in \Nb$ and vectors $a = (a_i)_{i\leq N}$ and $b = (b_i)_{i\leq N}$ in $(\Rb^+)^N$ such that:
$$f = \Sum{i=1}{N} a_i\Ind_{[(i-1)2^{-n},i2^{-n})}\ \text{and}\ g = \Sum{i=1}{N} b_i\Ind_{[(i-1)2^{-n},i2^{-n})}.$$
The hypothesis on $f$ and $g$ means that $a^p \succ b^p$ and $\Norm{a}_p = \Norm{b}_p$. This means by the Schur-Horn theorem that $b^p$ is in the convex hull of the permutations $a_{\sigma}^p$ of $a^p$ where for $\sigma \in \mathfrak{S}_N$, we write $a^p_{\sigma} := (a_{\sigma(i)}^p)_{1\leq i\leq N}$. Let 
$$f_{\sigma} = \Sum{i=1}{N} a_{\sigma(i)}\Ind_{[(i-1)2^{-n},i2^{-n})}.$$
This means that there exist non-negative coefficients $(\lambda_{\sigma})_{\sigma\in\mathfrak{S}_N}$ adding up to $1$ such that
$$g^p = \Sum{\sigma\in\mathfrak{S}_N}{} \lambda_{\sigma}f_{\sigma}^p = \Sum{\sigma\in\mathfrak{S}_N}{} \md{\lambda^{1/p}f_{\sigma}}^p.$$
Hence, by $p$-convexity of $E$ and noting that since $E$ is rearrangment invariant, $\Norm{f_{\sigma}}_E = \Norm{f}_E$:
$$\Norm{g}_E \leq C\p{\Sum{\sigma\in\mathfrak{S}_N}{} \Norm{\lambda_{\sigma}^{1/p}f_{\sigma}}_E^p}^{1/p} = C\Norm{f}_E.$$
\end{proof}

Let us now state the key approximation lemma.

\begin{lemma}\label{lem:approxmaj1}
Let $p>0$ and $\e>0$. Let $f,g\in T$ be positive non-increasing functions such that $f^p\succ g^p$. There exist sequences $(f_n)$ and $(g_n)$ of functions in $F$ such that $f_n \uparrow f$ a.e. and $g_n \uparrow g$ a.e. and $(1+\e)f_n^p \succ g_n^p$ for all $n\in\Nb$.
\end{lemma}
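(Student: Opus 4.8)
We want to approximate $f$ and $g$ from below by dyadic step functions while (almost) preserving left-majorization. The natural candidates are the "floor" approximations: partition $(0,\infty)$ into dyadic intervals of length $2^{-n}$, and on each such interval replace $f$ (resp. $g$) by a constant chosen to be close to $f$ from below. Since $f,g$ are non-increasing, taking $f_n$ to be the essential infimum of $f$ on each dyadic block gives a non-increasing step function with $f_n \uparrow f$ a.e.; but this choice ruins majorization because shrinking $f$ hurts the left-hand integrals $\int_0^t f^p$. Instead I would define $f_n$ to be the \emph{average-type} lower approximation — or more simply, take $f_n$ on the block $[(k-1)2^{-n}, k2^{-n})$ to equal the value $f((k-1)2^{-n} + 0)$ truncated appropriately, so that $\int_0^{k2^{-n}} f_n^p$ is as large as possible subject to $f_n \le f$. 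The cleanest device: since $f^p$ is a non-increasing integrable-on-compacts function, set $f_n := $ the left-continuous step function with $\int_0^{k2^{-n}} f_n^p = \int_0^{k2^{-n}} f^p$ exactly at dyadic points; this is achieved by letting $f_n^p$ be constant equal to $2^n\int_{(k-1)2^{-n}}^{k2^{-n}} f^p$ on each block, which is automatically $\le f^p((k-1)2^{-n})$ but \emph{not} $\le f^p$ pointwise in general.

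Because of that difficulty, the right approach is asymmetric: approximate $f$ from below by a step function $f_n$ whose left-cumulative integrals at dyadic points equal those of $f$ but allow a controlled loss elsewhere, and approximate $g$ from below crudely (just by the dyadic infimum $g_n$). Concretely: let $g_n$ be the non-increasing dyadic step function obtained by setting $g_n$ equal to $\inf$ of $g$ over each block $[(k-1)2^{-n},k2^{-n})$; then $g_n \uparrow g$ a.e., $g_n$ is non-increasing, and $\int_0^t g_n^p \le \int_0^t g^p$ for all $t$. For $f_n$, set $f_n$ equal on each block to $\big(2^n\int_{(k-1)2^{-n}}^{k2^{-n}} (\min(f,M_n))^p\big)^{1/p}$ where $M_n \uparrow \infty$ is a cutoff ensuring $f_n \in F$ and $f_n \le f$ fails only because of the averaging — to fix this, cap $f_n$ by the block-infimum of $f$, i.e.\ replace $f_n$ by $\min(f_n, \inf_{\text{block}} f)$, which loses at most a factor. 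I will verify: (i) $f_n \in F$, non-increasing, $f_n \uparrow f$ a.e.\ (by Lebesgue differentiation / monotone convergence on each block as $n\to\infty$); (ii) for each dyadic $t = k2^{-n}$, $\int_0^{t}(1+\e) f_n^p \ge \int_0^{t} f^p \ge \int_0^t g^p \ge \int_0^t g_n^p$, using that the averaging loss on the last partial block is absorbed by $(1+\e)$ once $n$ is large; (iii) since both $(1+\e)^{1/p}f_n$ and $g_n$ are non-increasing dyadic step functions, it suffices to check $\succ$ at dyadic points — here I'll use the earlier observation that for step functions affine on $[k2^{-n},(k+1)2^{-n}]$, $\int_0^t \le \int_0^{k2^{-n}}$-type interpolation reduces the continuum of inequalities to the discrete ones.

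For step (ii) to close I need the loss from replacing $\int_0^t f^p$ by $\int_0^{k2^{-n}} f_n^p$ (with $k2^{-n}$ the largest dyadic point $\le t$) to be bounded by $\e \int_0^t f^p$ — but this fails near $t=0$, where $\int_0^t f^p$ can be tiny. The fix is to handle small $t$ separately: for $t$ in the first block, $f_n$ equals the $p$-average of $f$ over $[0,2^{-n}]$ which, after the cap, is the infimum of $f$ on $[0,2^{-n}]$; so $\int_0^t f_n^p = t \cdot (\operatorname{ess\,inf}_{[0,2^{-n}]} f)^p$, and I need this to dominate $\int_0^t g_n^p = t\cdot(\operatorname{ess\,inf}_{[0,2^{-n}]} g)^p$ possibly after the $(1+\e)$ — and here I use that $f^p \succ g^p$ forces $\operatorname{ess\,inf}_{[0,2^{-n}]} f \ge$ something comparable to $g$'s behavior, but actually $f^p \succ g^p$ with both non-increasing gives $f^p(0^+)\ge$ average of $g^p$ near $0$ which is $\ge g^p(0^+)$ only in a limiting sense... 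This is genuinely delicate.

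\textbf{The main obstacle.} The hard part is precisely making the $(1+\e)$ slack suffice \emph{uniformly down to $t = 0$}: the averaging/flooring procedure necessarily loses a little at the left endpoint of the first non-increasing block, and near $t=0$ the integrals $\int_0^t f^p$ are not bounded below by a fixed constant, so a multiplicative error is not obviously controllable there. I expect to resolve this by choosing the dyadic mesh $2^{-n}$ and, if needed, a separate finer treatment of an initial interval $[0,\delta]$ whose length depends on $\e$, exploiting that on $[0,\delta]$ one can simply take $f_n \equiv (\operatorname{ess\,inf}_{[0,\delta]}f)$ and $g_n \equiv (\operatorname{ess\,inf}_{[0,\delta]}g)$ and the hypothesis $f^p\succ g^p$ applied at $t = \delta$ together with monotonicity yields the block inequality with room to spare. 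Once the initial segment is handled, on $[\delta,\infty)$ the integrals $\int_\delta^t f^p$ grow at least linearly in $t$ near $\delta$ and the flooring error is a genuine $o(1)$ relative to them as $n\to\infty$, so $(1+\e)$ suffices for all $n$ large. After that, properties (i) and (iii) are routine verifications of the kind the paper has been leaving to the reader.
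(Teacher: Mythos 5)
Your proposal keeps the approximations of $f$ and $g$ at the \emph{same} dyadic mesh, and therefore needs the multiplicative loss incurred by the lower approximation of $f$ to be at most $\e$ uniformly in $t$, down to $t$ of the order of the mesh. You correctly flag this as the crux, but the fix you propose does not close it. On the initial interval the inequality you need is $g(\delta^-)^p\le(1+\e)\inf_{[0,\delta]}f^p$, whereas the hypothesis $f^p\succ g^p$ at $t=\delta$ only yields $\delta\, g(\delta^-)^p\le\int_0^\delta g^p\le\int_0^\delta f^p$, i.e. it bounds $g(\delta^-)^p$ by the \emph{average} of $f^p$ over $[0,\delta]$, which can exceed its infimum there by an arbitrarily large factor: take $f^p=M\Ind_{[0,\delta/2)}+\Ind_{[\delta/2,1)}$ and $g^p=\tfrac{M+1}{2}\Ind_{[0,\delta)}$; then $f^p\succ g^p$ but $g(\delta^-)^p=\tfrac{M+1}{2}\gg(1+\e)f^p(\delta^-)$. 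So there is no ``room to spare''. Moreover, freezing $g_n$ at $\inf_{[0,\delta]}g$ on a fixed interval $[0,\delta]$ is incompatible with $g_n\uparrow g$ a.e.\ there, so $\delta$ would have to shrink with $n$, reinstating the original problem. As it stands, the argument has a genuine gap.

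The idea you are missing is to \emph{decouple the two indices}, which makes the lemma almost immediate and is how the paper proceeds. Define $g_n$ (and likewise $f_m$) by assigning to each $t$ the value of $g$ at the right endpoint of the dyadic interval of length $2^{-n}$ containing $t$, truncated to $[0,2^n)$; these lie in $F$, are non-increasing, satisfy $g_n\le g$ and increase to $g$ at every point of continuity, hence a.e. Now fix $n$. The function $G_n:t\mapsto\int_0^tg_n^p$ is piecewise affine with finitely many breakpoints and is constant for $t\ge 2^n$, while each $F_m:t\mapsto\int_0^tf_m^p$ is concave; hence $(1+\e)F_m\ge G_n$ everywhere as soon as it holds at the finitely many breakpoints. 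At each breakpoint $t_i$ one has $G_n(t_i)\le\int_0^{t_i}g^p\le\int_0^{t_i}f^p$, so either $\int_0^{t_i}f^p=0$ (and then $G_n(t_i)=0$) or $(1+\e)\int_0^{t_i}f^p>G_n(t_i)$ with a strict margin; since $F_m(t_i)\uparrow\int_0^{t_i}f^p$ by monotone convergence, some $m_n\ge n$ satisfies all finitely many constraints simultaneously. The pair $(f_{m_n},g_n)$ then does the job. No uniform-in-$t$ control near $0$ is ever needed: for each fixed $n$ only finitely many values of $t$ must be checked, and the concavity of $F_{m_n}$ against the piecewise affineness of $G_n$ takes care of all other $t$, including those in $(0,2^{-n})$.
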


\begin{proof}
For any $t\geq 0$ and $n\in \Nb$, consider the interval of the form $[i2^{-n},(i+1)2^{-n})$ containing $t$ and denote by $a_n(t)$ its right extremity. Define 
$$g_n(t) = \left\{
    \begin{array}{ll}
        g(a_n(t))  & \mbox{if } t < 2^n \\
        0     & \mbox{if } t \geq 2^n . \\
    \end{array}
\right.$$
By construction, $g_n\in F$ and $g_n$ is non-increasing. Moreover, $g_n(t) \uparrow g(t)$ for all points of continuity of $g$. Since $g$ is non-increasing, it has only countably many points of discontinuity and $g_n \uparrow g$ a.e. Consider also the sequence $(f_n)_{n\in\Nb}$ similarly defined. Now fix $n$ and let us find $m_n$ such that $(1+\e)f_{m_n} \succ g_n$. To do so, note that since for all $m$:
$$F_m:t\mapsto \Int{0}{t}f_m^p$$
is concave and increasing and 
$$G_n:t\mapsto \Int{0}{t}g_n^p$$
is affine on intervals of the form $[i2^{-n},(i+1)2^{-n})$ and constant for $t$ large enough, it suffices to check that $(1+\e)F_m(i2^n)\geq G_n(i2^n)$ for finitely many $i\in\Nb$. 
Since 
$$F_m(t) \to \Int{0}{t} f^p$$ and $$G_n(t) \leq \Int{0}{t} g^p \leq \Int{0}{t} f^p,$$
for $m_n$ large enough, we have $(1+\e)f_{m_n} \succ g_n$.
Let $i_n = \max (m_n,n)$, the sequences $(f_{i_n})$ and $(g_n)$ verify the requirements of the lemma.
\end{proof}

\begin{lemma}\label{lem:FtoE1}
Let $p>0$ and $C>0$. Let $E$ be a quasi-Banach symmetric function space with the Fatou property such that:
$$\forall f,g\in F, f^p \succ g^p \Rightarrow \Norm{g}_E \leq C\Norm{f}_E.$$
Then $E$ is left-$p$-monotone.
\end{lemma}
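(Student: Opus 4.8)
The plan is to bootstrap left-$p$-monotonicity of $E$ on all of $L_0$ from the hypothesis, which only concerns the dyadic step functions in $F$, by combining the approximation Lemma \ref{lem:approxmaj1} with the Fatou property. The constant $C$ will be preserved.

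First I would take $f\in E$ and $g\in L_0$ with $\md{f}^p \succ \md{g}^p$. Since $E$ is symmetric, $f^*\in E$ with $\Norm{f^*}_E = \Norm{f}_E$, and it suffices to prove $g^*\in E$ with $\Norm{g^*}_E \les \Norm{f}_E$, because the quasi-norm depends only on the distribution so $\Norm{g}_E = \Norm{g^*}_E$. By definition of $\succ$, the hypothesis $\md{f}^p\succ\md{g}^p$ is precisely $(f^*)^p \succ (g^*)^p$, and $f^*,g^*$ are non-negative and non-increasing, so I am in a position to apply Lemma \ref{lem:approxmaj1}.

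Next, fix $\e>0$ and apply Lemma \ref{lem:approxmaj1} to $f^*$ and $g^*$: this produces sequences $(f_n),(g_n)$ in $F$ with $f_n\uparrow f^*$ and $g_n\uparrow g^*$ a.e.\ and $(1+\e)f_n^p\succ g_n^p$ for every $n$. Since $F$ is stable under multiplication by non-negative scalars, $(1+\e)^{1/p}f_n\in F$ and $\p{(1+\e)^{1/p}f_n}^p \succ g_n^p$, so the hypothesis gives $\Norm{g_n}_E \leq C(1+\e)^{1/p}\Norm{f_n}_E$. The construction in Lemma \ref{lem:approxmaj1} yields $f_n\leq f^*$ pointwise (each $f_n$ is obtained by sampling $f^*$ at right endpoints of dyadic intervals and truncating, and $f^*$ is non-increasing), so, $\Norm{\cdot}_E$ being increasing, $\Norm{f_n}_E\leq \Norm{f^*}_E = \Norm{f}_E$. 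Hence $\Norm{g_n}_E \leq C(1+\e)^{1/p}\Norm{f}_E$ for all $n$, a bound independent of $n$.

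Finally I invoke the Fatou property: $(g_n)$ is an increasing sequence of elements of $E$ with $g_n\uparrow g^*$ a.e.\ and $(\Norm{g_n}_E)_n$ bounded, hence $g^*\in E$ and $\Norm{g_n}_E\uparrow\Norm{g^*}_E$; therefore $\Norm{g^*}_E \leq C(1+\e)^{1/p}\Norm{f}_E$. Letting $\e\to 0$ gives $\Norm{g^*}_E \leq C\Norm{f}_E$, i.e.\ $g\in E$ and $\Norm{g}_E\leq C\Norm{f}_E$, which is exactly left-$p$-monotonicity with constant $C$. The only points requiring a little care are checking that the approximants $f_n$ stay below $f^*$ (so that the norm bound is uniform in $n$) and that the hypotheses of Lemma \ref{lem:approxmaj1} are satisfied after passing to rearrangements; both are routine, since the real content has already been isolated in Lemma \ref{lem:approxmaj1} and in the Fatou property.
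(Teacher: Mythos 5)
Your proof is correct and follows essentially the same route as the paper's: pass to rearrangements, approximate by the sequences from Lemma \ref{lem:approxmaj1}, apply the hypothesis to $(1+\e)^{1/p}f_n$ and $g_n$, use $f_n\leq f^*$ to get a uniform bound, invoke the Fatou property, and let $\e\to 0$. Your handling of the constant as $C(1+\e)^{1/p}$ is in fact slightly more careful than the paper's $(1+\e)C$, and both converge to $C$.
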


\begin{proof}
Since $E$ is rearrangement invariant, we can suppose that $f$ and $g$ are positive non-increasing functions by considering $f^*$ and $g^*$. Let $\e>0$. Take $(f_n)$ and $(g_n)$ two sequences given by lemma $\ref{lem:approxmaj1}$. For all $n\in\Nb$:
$\Norm{g_n}_E \leq (1+\e)C\Norm{f_n}_E \leq (1+\e)C\Norm{f}_E$.
So by the Fatou property, $g\in E$ and $\Norm{g}_E \leq (1+\e)C\Norm{f}_E$.
Since this is true for all $\e >0$, $\Norm{g}_E \leq C\Norm{f}_E.$
\end{proof}

By combining the results above, we obtain the following:

\begin{corollary}\label{cor:convTomonotone}
Let $E$ be a quasi-Banach symmetric sequence space with the Fatou property. If $E$ is $p$-convex then $E$ is left-$p$-monotone.
\end{corollary}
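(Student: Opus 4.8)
The plan is to chain together the results already established in this section, so the proof is essentially a one-line deduction once the pieces are in place. First I would observe that the hypothesis ``$E$ is $p$-convex'' is a statement about $E$ as a sequence space, but via the standard embedding of $\ell^\infty$ into $L_\infty$ described in Subsection \ref{thm:sequencespace} (the map $u \mapsto \sum_i u_i \Ind_{[i-1,i)}$), a $p$-convexity inequality for finitely supported sequences transfers to a $p$-convexity inequality for the corresponding step functions in $F$; conversely, since $F$ functions restricted to the relevant scale can be compared with sequences, the two notions of $p$-convexity match up. So the first step is to record that $p$-convexity of the sequence space $E$ implies the implication ``$f^p \succ g^p \Rightarrow \Norm{g}_E \leq C\Norm{f}_E$ for all $f,g \in F$'' — this is exactly the unnamed lemma just before Lemma \ref{lem:approxmaj1}, applied in the sequence setting (or rather, its proof, which only uses the Schur--Horn theorem and rearrangement invariance, goes through verbatim for sequences/step functions).

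Next I would invoke Lemma \ref{lem:FtoE1}: since $E$ has the Fatou property and satisfies the $F$-level majorization implication, $E$ is left-$p$-monotone. One subtlety is that Lemma \ref{lem:FtoE1} and the approximation Lemma \ref{lem:approxmaj1} are stated for function spaces on $(0,\alpha)$ using the space $F$ of dyadic step functions on $(0,\infty)$, whereas here $E$ is a sequence space. The honest way to handle this is to note that all the machinery (the definition of $F$, Lemma \ref{lem:technique:fmaj g with norm f = norm g}, Lemma \ref{lem:approxmaj1}, Lemma \ref{lem:FtoE1}) only ever manipulates step functions that are constant on dyadic intervals, and the sequence space $E$ sits inside $L_\infty$ exactly as the functions constant on unit intervals; after a harmless rescaling the dyadic approximation arguments apply, with the conditional expectation $\Eb$ onto $\ell^\infty$ used (as in the proof of Theorem \ref{thm:sequencespace}) to push $F$-functions down to genuine sequences without increasing $E$-norms or destroying the majorization relations. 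I would spell this reduction out in a sentence or two rather than re-proving the lemmas.

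The only real content, and the step I expect to be the main obstacle to state cleanly, is precisely this passage between the function-space formulation of Section \ref{5} and the sequence-space setting of the corollary: one must check that $p$-convexity is preserved under the embedding and that the Fatou property of the sequence space gives what is needed in Lemma \ref{lem:FtoE1}. Both are routine but deserve an explicit remark. Once that bridge is acknowledged, the proof reads: $E$ $p$-convex $\Rightarrow$ the $F$-level implication holds (Schur--Horn lemma) $\Rightarrow$ $E$ is left-$p$-monotone (Lemma \ref{lem:FtoE1}). I would write it compactly:

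\begin{proof}
By the embedding of $\ell^\infty$ into $L_\infty$ described in Subsection \ref{thm:sequencespace}, $p$-convexity of the sequence space $E$ with constant $C$ yields, for all $f,g\in F$, the implication $f^p \succ g^p \Rightarrow \Norm{g}_E \leq C\Norm{f}_E$: this is the lemma preceding Lemma \ref{lem:approxmaj1}, whose proof uses only the Schur--Horn theorem and rearrangement invariance and applies verbatim to step functions. Since $E$ has the Fatou property, Lemma \ref{lem:FtoE1} then shows that $E$ is left-$p$-monotone, the approximation arguments of Lemma \ref{lem:approxmaj1} and Lemma \ref{lem:FtoE1} again involving only dyadic step functions and hence transferring to the sequence setting (using $\Eb$ to replace an $F$-function by a sequence with no larger $E$-norm, exactly as in the proof of Theorem \ref{thm:sequencespace}).
\end{proof}
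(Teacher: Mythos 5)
Your core chain is exactly the paper's intended argument: $p$-convexity gives the implication $f^p\succ g^p\Rightarrow\Norm{g}_E\leq C\Norm{f}_E$ for $f,g\in F$ via the Schur--Horn lemma preceding Lemma \ref{lem:approxmaj1}, and Lemma \ref{lem:FtoE1} together with the Fatou property upgrades this to left-$p$-monotonicity; the paper offers nothing more than ``by combining the results above.'' The bulk of your proposal is devoted to bridging the sequence-space phrasing of the statement with the function-space phrasing of the lemmas, and here two remarks are in order. First, the word ``sequence'' in the corollary is almost certainly a slip for ``function'': the parallel statement in Lemma \ref{lem:concave} is for function spaces, and the corollary is used (via Theorem \ref{thm : compilation}, P3) for function spaces, so for the intended statement your chain applies verbatim and no bridge is needed. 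Second, if one does insist on the sequence-space reading, the mechanism you sketch is the weak point: a function of $F$ constant on intervals of length $2^{-n}<1$ is not an element of the sequence space, so the $F$-level implication does not typecheck there, and ``using $\Eb$ to push $F$-functions down to sequences without increasing norms or destroying the majorization'' is asserted rather than proved (the averaging argument in Theorem \ref{thm:sequencespace} compares a sequence with a function, not two averaged functions). The cleaner route for sequences is to bypass $F$ entirely: apply Schur--Horn directly to finite sequences (where $p$-convexity of $E$ gives the majorization implication immediately) and replace the dyadic approximation of Lemma \ref{lem:approxmaj1} by truncation of sequences, with the Fatou property closing the limit. So: correct in substance and identical in route to the paper for the statement as intended, with the sequence-space detour being both probably unnecessary and, as written, not fully justified.
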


\subsection{Concavity implies right-monotonicity}

The same kind of proofs can be reproduced to obtain results on concavity and right-monotonicity. The approximation lemma, similar to Lemma \ref{lem:approxmaj1}, is given by:

\begin{lemma}\label{lem:approxmaj2}
Let $q>0$ and $\e>0$. Let $f,g\in T$ be positive non-increasing functions such that $f^q\triangleright g^q$. There exist sequences $(f_n)$ and $(g_n)$ of functions in $F$ such that $f_n \uparrow f$ a.e., $g_n \uparrow g$ a.e. and $(1+\e)f_n^q \triangleright g_n^q$ for all $n\in\Nb$.
\end{lemma}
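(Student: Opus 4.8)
The plan is to mimic the proof of Lemma \ref{lem:approxmaj1}, replacing left-majorization by right-majorization throughout, but with the crucial modification that the relevant functionals now involve the \emph{tail} integrals $\int_t^\infty$ rather than the \emph{head} integrals $\int_0^t$. As before, for $t\geq 0$ and $n\in\Nb$ I would let $a_n(t)$ denote the right endpoint of the dyadic interval $[i2^{-n},(i+1)2^{-n})$ containing $t$, and define the same kind of discretization:
$$g_n(t) = \begin{cases} g(a_n(t)) & \text{if } t < 2^n,\\ 0 & \text{if } t\geq 2^n,\end{cases}$$
and similarly $f_n$. These are in $F$, non-increasing, and $g_n\uparrow g$, $f_n\uparrow f$ a.e.\ since $g,f$ are non-increasing (hence have only countably many discontinuities). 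So far this is identical.

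The heart of the argument is then to produce, for each fixed $n$, an index $m_n$ such that $(1+\e)f_{m_n}^q \triangleright g_n^q$. Here I would work with the tail functions $\overline{F}_m(t) = \int_t^\infty f_m^q$ and $\overline{G}_n(t) = \int_t^\infty g_n^q$. Note that $\overline{F}_m$ is convex, decreasing, and supported in $[0,2^m]$ (it is piecewise affine), while $\overline{G}_n$ is piecewise affine on dyadic intervals of mesh $2^{-n}$ and vanishes for $t\geq 2^n$. Because both are piecewise affine with breakpoints among the dyadic points $i2^{-n}$ (for $m\geq n$) — and because $\overline{F}_m$ is convex while $\overline{G}_n$ is affine on each such interval — it suffices to verify the inequality $(1+\e)\overline{F}_m(i2^{-n}) \geq \overline{G}_n(i2^{-n})$ at the finitely many breakpoints $i=0,1,\dots$ with $i2^{-n} \leq 2^n$. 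To get this, observe first that for $t\geq 0$,
$$\overline{G}_n(t) \leq \int_t^\infty g_n^q \leq \int_t^\infty g^q \leq \int_t^\infty f^q,$$
where the last inequality is the hypothesis $f^q\triangleright g^q$ applied at $t$ (using $\int_t^\infty f^q = \int_t^\infty (f^*)^q$ and likewise for $g$, since $f,g$ are already non-increasing). On the other hand, $\overline{F}_m(t)\to \int_t^\infty f^q$ as $m\to\infty$ for each fixed $t$, and the convergence at the finitely many relevant points $i2^{-n}$ is uniform over that finite set, so for $m_n$ large enough $(1+\e)\overline{F}_{m_n}(i2^{-n}) \geq \int_{i2^{-n}}^\infty f^q \geq \overline{G}_n(i2^{-n})$ at all of them. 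Setting $i_n = \max(m_n,n)$, the sequences $(f_{i_n})$ and $(g_n)$ do the job.

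The one step that needs slightly more care than in Lemma \ref{lem:approxmaj1} — and which I expect to be the main (minor) obstacle — is the monotonicity direction in the inequality $\overline{G}_n(t)\leq \int_t^\infty g_n^q \leq \int_t^\infty g^q$: since $g_n$ is built from values $g(a_n(t))$ with $a_n(t)\geq t$ and $g$ non-increasing, we have $g_n \leq g$ pointwise, so this is in fact immediate — unlike the left case where $g_n$ overshoots $g$ and one needs $\int_0^t g_n^q \leq \int_0^t g^q$ via concavity of the primitive. One should also check that truncating $g$ to $0$ on $[2^n,\infty)$ does not break anything: it only decreases tail integrals, so it is harmless. Thus the whole argument is, if anything, slightly simpler than Lemma \ref{lem:approxmaj1}, and no genuinely new idea is required beyond swapping head integrals for tail integrals and convexity of primitives for convexity of tail functions.
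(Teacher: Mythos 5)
Your discretization $f_m(t)=f(a_m(t))$, $g_n(t)=g(a_n(t))$ is the natural analogue of Lemma \ref{lem:approxmaj1}, but the step where you reduce to checking the inequality at the coarse breakpoints $i2^{-n}$ is exactly where the symmetry between the two lemmas breaks down, and it breaks in a way that cannot be patched without modifying the construction. The tail function $\overline{F}_m(t)=\int_t^\infty f_m^q$ is \emph{convex} and piecewise affine with breakpoints on the \emph{fine} mesh $j2^{-m}$, not on the mesh $i2^{-n}$. A convex function lies \emph{below} its chords, so knowing $(1+\e)\overline{F}_m\geq \overline{G}_n$ at the points $i2^{-n}$ says nothing about the points in between, where $\overline{F}_m$ dips under the affine interpolation. (In Lemma \ref{lem:approxmaj1} it is precisely the \emph{concavity} of $F_m(t)=\int_0^t f_m^p$ that makes the endpoint check sufficient; here convexity works against you.) The correct set of checkpoints is $\{j2^{-m}\}$, whose cardinality grows with $m$, so the ``choose $m_n$ large enough'' limiting argument no longer closes.

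This is not only a gap in the write-up: the conclusion genuinely fails for the verbatim construction. Take $f=2^{1/q}\Ind_{[0,1)}$ and $g=\Ind_{[0,1]}$. Both are non-negative and non-increasing, and $\int_t^\infty f^q=2(1-t)^+\geq (1-t)^+=\int_t^\infty g^q$, so $f^q\triangleright g^q$. Since $g(1)=1$ while $f(1)=0$, your recipe yields $g_n=\Ind_{[0,1)}$ for every $n$ but $f_m=2^{1/q}\Ind_{[0,1-2^{-m})}$, and for $t\in(1-2^{-m},1)$ one has $\int_t^\infty f_m^q=0<1-t=\int_t^\infty g_n^q$; hence $(1+\e)f_m^q\triangleright g_n^q$ fails for all $m,n$. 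The trouble sits at the right edge of the supports: sampling $f$ at right endpoints shrinks $\mathrm{supp}(f_m)$, the hypothesis only gives $\sup\mathrm{supp}(f)\geq\sup\mathrm{supp}(g)$ (possibly with equality), and near that edge the multiplicative slack $\e\,\overline{F}_m(t)$ tends to $0$ and cannot absorb the discretization error. The lemma itself is true, but one must in addition cut $g_n$ off strictly inside the support of $g$ (say replace $g_n$ by $g_n\Ind_{[0,s_n)}$ with $s_n\uparrow\sup\mathrm{supp}(g)$, $s_n<\sup\mathrm{supp}(g)$, which preserves $g_n\uparrow g$ a.e.); then $\overline{F}_m\geq\delta_n>0$ on $[0,s_n]$ for $m$ large, the breakpoint estimate $\overline{F}_m(j2^{-m})\geq\int_{(j+1)2^{-m}}^\infty f^q\geq\overline{G}_n(j2^{-m})-2^{-m}g(2^{-n})^q$ becomes usable, and the piecewise-affine structure on the fine mesh finishes the proof. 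So the argument is not ``slightly simpler'' than the left case; the boundary of the support requires a new step.
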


And we have the following:

\begin{lemma}\label{lem:concave}
Let $E$ be a quasi-Banach symmetric function space. 
\begin{itemize}
\item if $E$ is $q$-concave with constant $C$ then for all $f,g \in F$, 
$$f^q \triangleright g^q \Rightarrow \Norm{g}_E \leq C\Norm{f}_E.$$
\item if $E$ has the Fatou property and there exists $C>0$ such that for all $f,g \in F$, 
$$f^q \triangleright g^q \Rightarrow \Norm{g}_E \leq C\Norm{f}_E,$$
then $E$ is right-$q$-monotone.
\item if $E$ is $q$-concave and has the Fatou property, $E$ is right-$q$-monotone.
\end{itemize}
\end{lemma}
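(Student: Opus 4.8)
The plan is to mirror exactly the structure already used for the left/convexity case (Lemmas \ref{lem:technique:fmaj g with norm f = norm g}, \ref{lem:approxmaj1}, \ref{lem:FtoE1} and the $p$-convexity lemma), transposing every argument to the right-majorization and $q$-concavity setting. So I would prove the three bullet points in order.

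For the first bullet, let $f,g\in F$ with $f^q\triangleright g^q$. Using Lemma \ref{lem:technique:fmaj g with norm f = norm g}(2) together with the Remark that follows it, I would first reduce to the case $\Norm{f}_q=\Norm{g}_q$, since $\Norm{\cdot}_E$ is increasing and we can replace $g$ by a larger $h\in F$ with $h\geq g$, $f^q\triangleright h^q$ and $\Norm{h}_q=\Norm{f}_q$; then $g^q\succ f^q$ is equivalent to $f^q\triangleright g^q$ in the equal-norm case. Now write $f=\sum_{i=1}^N a_i\Ind_{[(i-1)2^{-n},i2^{-n})}$ and $g=\sum_{i=1}^N b_i\Ind_{[(i-1)2^{-n},i2^{-n})}$ on a common dyadic grid. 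The conditions become $b^q\succ a^q$ (left-majorization of vectors!) with $\Norm{a}_q=\Norm{b}_q$, hence by the geometric Schur-Horn theorem $a^q$ lies in the convex hull of the permutations $b^q_\sigma$ of $b^q$. So $f^q=\sum_\sigma \lambda_\sigma g_\sigma^q$ with $\lambda_\sigma\geq 0$ summing to $1$, where $g_\sigma=\sum_i b_{\sigma(i)}\Ind_{[(i-1)2^{-n},i2^{-n})}$ has $\Norm{g_\sigma}_E=\Norm{g}_E$. Then, exactly as before but applying $q$-concavity,
\[
\Norm{f}_E=\Big\|\Big(\sum_\sigma \md{\lambda_\sigma^{1/q}g_\sigma}^q\Big)^{1/q}\Big\|_E\geq \frac1C\Big(\sum_\sigma \Norm{\lambda_\sigma^{1/q}g_\sigma}_E^q\Big)^{1/q}=\frac1C\Norm{g}_E,
\]
which is the claim. (One subtle point: in the left case one decomposes the \emph{larger} function $g$; here one must decompose $f^q$ as a convex combination, so Schur-Horn is applied with the roles arranged so that the majorized vector is $a^q$ and it sits in the hull of permutations of $b^q$ — I would double-check the direction of the inequality in Schur-Horn matches.)

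For the second bullet, this is the analogue of Lemma \ref{lem:FtoE1} and uses Lemma \ref{lem:approxmaj2} in place of Lemma \ref{lem:approxmaj1}. Given $f\in E$ and $g\in L_0$ with $\md{f}^q\triangleright\md{g}^q$, replace $f,g$ by $f^*,g^*$ (rearrangement invariance), fix $\e>0$, take the sequences $(f_n),(g_n)\subset F$ from Lemma \ref{lem:approxmaj2} with $f_n\uparrow f$, $g_n\uparrow g$ a.e.\ and $(1+\e)f_n^q\triangleright g_n^q$. Apply the hypothesis (with $(1+\e)^{1/q}f_n$ in the role of the first function) to get $\Norm{g_n}_E\leq (1+\e)^{1/q}C\Norm{f_n}_E\leq (1+\e)^{1/q}C\Norm{f}_E$ for all $n$; the Fatou property then gives $g\in E$ and $\Norm{g}_E\leq (1+\e)^{1/q}C\Norm{f}_E$, and letting $\e\to 0$ finishes it. The third bullet is then immediate: $q$-concavity gives the hypothesis of the second bullet via the first bullet, so $E$ is right-$q$-monotone.

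I do not expect a genuine obstacle here — the whole section is built so that the concavity side is a faithful transcription of the convexity side — but the one place demanding care is getting the \emph{direction} of the majorizations straight when invoking Schur-Horn: right-majorization of functions in the equal-norm case flips to left-majorization of the coefficient vectors, and one must feed Schur-Horn the correctly oriented pair so that the conclusion "$b^q$ in the convex hull of permutations of $a^q$" comes out the way needed to bound $\Norm{g}_E$ by $\Norm{f}_E$ (and not the reverse). The only other thing to verify is that Lemma \ref{lem:technique:fmaj g with norm f = norm g}(2) and Lemma \ref{lem:approxmaj2} are stated in a form directly usable here, which they are.
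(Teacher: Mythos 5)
Your proposal is correct and is precisely the argument the paper intends: the paper omits the proof of this lemma, stating only that "the same kind of proofs can be reproduced," and your transcription (Lemma \ref{lem:technique:fmaj g with norm f = norm g}(2) plus the equal-norm remark to flip $\triangleright$ into $\succ$, Schur--Horn applied so that the coefficient vector of $f^q$ lies in the convex hull of permutations of that of $g^q$, then $q$-concavity, and finally Lemma \ref{lem:approxmaj2} with the Fatou property for the approximation step) is the faithful analogue of the convexity-side proofs. The orientation of the majorizations, which you rightly flag as the delicate point, is handled correctly.
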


\begin{remark}
The reverse of the lemma above and corollary \ref{cor:convTomonotone} are not true. Indeed, consider for example the space $L_{1,\infty}$. It will be proved later that it is right-$q$-monotone for all $q>1$ but it is not $q$-concave for any $q$. Constructing symmetric spaces which are not $p$-convex for any $p$ is not an easy task. We are indebted to F. Sukochev for indicating the following reference to us (\cite{Kal86}). It seems that similar techniques can be used to construct a left-$1$-monotone space which is not $p$-convex for any $p$.
\end{remark}

\section{Application to noncommutative Khintchine inequalities} \label{section:khintchine}

Let $\A$ be a noncommutative probability space, $(\xi_i)_{i\in\Nb}$ a sequence of elements of $A$, orthonormal in $L_2(\A)$ and $\M$ a noncommutative measure space. The notation $\B(\ell^2) \overline{\otimes} L_{\infty}(0,1)$ will stand for the tracial tensor product of $\B(\ell^2)$ and $L_{\infty}(0,1)$ and is therefore equipped with a natural trace $\tau$, making it a noncommutative measure space. Similarly, define $\M_\A = \M\overline{\otimes}\A$. We keep using notations defined in the introduction. In this section, we impose that $\alpha = \infty$. In other words, symmetric spaces $E$ will be considered to be defined over $(0,\infty)$. 

We start with the key proposition of this section, an application of the Shcur-Horn theorem.

\begin{property}\label{prop:useSchurHorn}
Let $\M = \B(\ell^2) \overline{\otimes} L_{\infty}(0,1)$. Let $f,g\in F$ non-increasing such that $f^2 \succ g^2$ and $\Norm{f}_2 = \Norm{g}_2$ then there exists $x\in S(\M)$ such that $\mu(Gx) = f$ and $\mu(S_r(x)) = g$.
\end{property}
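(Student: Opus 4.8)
The plan is to realize $f$ as the singular value function of a Gaussian-type sum $Gx$ with free Haar unitaries, and simultaneously realize $g$ as the singular value function of the row square function $S_r(x)$, using the matricial form of the Schur--Horn theorem (the second bullet of Theorem~\ref{thm:schurhorn}). Since $f,g\in F$, there exist $n,N\in\Nb$ and non-increasing $a,b\in(\Rb^+)^N$ with $f=\sum_{i=1}^N a_i\Ind_{[(i-1)2^{-n},i2^{-n})}$ and $g=\sum_{i=1}^N b_i\Ind_{[(i-1)2^{-n},i2^{-n})}$; the hypotheses translate to $a^2\succ b^2$ and $\sum a_i^2=\sum b_i^2$. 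By the Schur--Horn theorem there is a Hermitian matrix $M\in\Mb_N(\Cb)$ whose eigenvalues are $a_i^2$ and whose diagonal entries are $b_i^2$.

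First I would fix a free family $(\xi_i)$ of Haar unitaries in $\A$ and a unital trace-preserving copy of $\Mb_N(\Cb)\otimes L_\infty(0,1)$ inside $\M=\B(\ell^2)\otimes L_\infty(0,1)$. I want to choose $x=(x_i)\in S(\M)$ so that $Gx=\sum_i x_i\otimes\xi_i$ has the prescribed singular values and so that $S_r(x)^2=\sum_i x_i x_i^*$ has the prescribed singular values. The natural choice: take a unitary $U\in\Mb_N(\Cb)$ diagonalizing $M$, so $M=U\,\mathrm{diag}(a_i^2)\,U^*$; writing $d_i^{1/2}$ for the positive square roots, set $x_i = 2^{-n/2}\,(\text{$i$-th column data of } U\,\mathrm{diag}(a_j))$ suitably arranged — concretely, one wants $\sum_i x_i x_i^* = 2^{-n}M$ and $Gx$ to have distribution governed by the eigenvalues $2^{-n}a_i^2$. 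Because the $\xi_i$ are free Haar unitaries and orthonormal, $\sum_i x_i\otimes\xi_i$ is (after passing to the relevant finite von Neumann algebra) a ``circular-type'' element whose $*$-distribution, hence singular value function, depends only on $\sum_i x_i^* x_i$ (equivalently its trace data): one checks $(Gx)^*(Gx)$ has the same distribution as $\sum_i x_i^* x_i$ via freeness, so that $\mu(Gx)$ is the rearrangement of the eigenvalue list $\{2^{-n}a_i^2\}$ each with multiplicity matching the interval length $2^{-n}$ — i.e. exactly $f$. Meanwhile $S_r(x)^2=\sum_i x_ix_i^* = 2^{-n}M$, whose diagonal (after an appropriate further conjugation, or by choosing the column arrangement so that the diagonal of $M$ is visible) yields the values $2^{-n}b_i^2$; but more directly, $\mu(S_r(x)) = \mu\big((2^{-n}M)^{1/2}\big)$ is the rearrangement of $\{2^{-n/2}a_i\}$, which is $f$ again, not $g$. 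So the correct device is to instead put the \emph{eigenvalues} $a_i^2$ into $Gx$ and the \emph{diagonal} $b_i^2$ into $S_r(x)$ by a row/column splitting: take $x$ to be a single ``row'' whose entries are the rows of $U\,\mathrm{diag}(a_j)$, so that $\sum_i x_i x_i^* = MM^* $-type expressions collapse to $M$ with eigenvalues $a_i^2$, while $\sum_i x_i^*x_i$ has diagonal $b_i^2$; the point of the Schur--Horn matrix $M$ is precisely to interpolate between the eigenvalue list $a^2$ and the diagonal list $b^2$, and freeness of the $\xi_i$ guarantees $\mu(Gx)$ is read off from the eigenvalues while the row square function is read off from the diagonal.

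Concretely, then, I would: (i) apply Schur--Horn to get Hermitian $M$ with eigenvalues $a_i^2$ and diagonal $b_i^2$; (ii) factor $M = VV^*$ with $V\in\Mb_N(\Cb)$ (e.g. $V=M^{1/2}$), and set $x_i := 2^{-n/2}\, e_{i}\otimes(\text{$i$-th row of }V)$ realized as a finite matrix in $\B(\ell^2)$, tensored with $\Ind_{(0,1)}\in L_\infty(0,1)$ if needed to fit the $L_\infty(0,1)$ factor; (iii) compute $S_r(x)^2 = \sum_i x_i x_i^* = 2^{-n}\,\mathrm{diag}(\|\text{row}_i(V)\|_2^2) = 2^{-n}\,\mathrm{diag}(M_{ii}) = 2^{-n}\mathrm{diag}(b_i^2)$, so $\mu(S_r(x)) = g$ (each value $2^{-n/2}b_i$ occurring on a set of measure $2^{-n}$); (iv) for $Gx$, use freeness of the Haar unitaries to identify the $*$-distribution of $\sum_i x_i\otimes\xi_i$: since the $x_i$ assemble (up to the scalar $2^{-n/2}$) into the single matrix $V^T$ acting between two copies of $\Cb^N$, and $\sum_i (\text{col}_i)\otimes\xi_i$ with free Haar unitaries has the same distribution as $W\otimes u$ for a single Haar unitary $u$ and $W$ with the same singular values — more precisely $(Gx)^*(Gx)$ has the same moments as $2^{-n} V^*V$ which has eigenvalues $2^{-n}a_i^2$ — conclude $\mu(Gx) = f$. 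Step (iv), the distributional identification of $Gx$ via freeness, is the main obstacle: it requires knowing that for a free family of Haar unitaries $(\xi_i)$ and matrices $(y_i)$ one has $\mu\big(\sum_i y_i\otimes\xi_i\big)$ depending only on $\sum_i y_i^*y_i$ (this is a standard consequence of the theory of $R$-diagonal/circular elements and freeness, cf. the references \cite{PisRic17, DirRic13}, but must be invoked carefully and is exactly the input that fails for a merely orthonormal, non-free, non-Haar family), whereas steps (i)--(iii) are elementary linear algebra plus the Schur--Horn input already quoted as Theorem~\ref{thm:schurhorn}. I would flag that the hypothesis $\M\supseteq\B(\ell^2)\otimes L_\infty(0,1)$ is used to host the matrix block $\Mb_N(\Cb)\hookrightarrow\B(\ell^2)$ together with enough ``room'' in the $L_\infty(0,1)$ direction for the elements to be genuinely in $S(\M)$.
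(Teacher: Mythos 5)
Your steps (i)--(iii) reproduce the paper's construction: the paper takes $x_i = e_{i,i}M^{1/2}\otimes\Ind_{(0,2^{-n})}$, which is exactly your ``$e_i\otimes(\text{$i$-th row of }V)$'' with $V=M^{1/2}$, and your computation $S_r(x)^2=\sum_i x_ix_i^*=\mathrm{Diag}(M)$ is correct. The genuine gap is step (iv). The principle you invoke --- that for free Haar unitaries $(\xi_i)$ the singular value function of $\sum_i y_i\otimes\xi_i$ depends only on $\sum_i y_i^*y_i$ --- is false: already for scalar coefficients, $\md{\xi_1+\xi_2}^2=2+\xi_1^*\xi_2+\xi_2^*\xi_1$ has a nondegenerate (arcsine-type) distribution while $\md{\sqrt2\,\xi_1}^2=2$ is constant, although both sums have the same $\sum_i\md{y_i}^2$. (This is consistent with the free Khintchine inequality itself, whose right-hand side genuinely involves both square functions.) So the ``main obstacle'' you flag cannot be removed by appealing to $R$-diagonality at that level of generality. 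What actually saves the argument --- and is the whole point of the paper's choice of $x_i$ --- is that the cross terms vanish algebraically: $x_i^*x_j=M^{1/2}e_{i,i}e_{j,j}M^{1/2}=0$ for $i\neq j$, hence
$$\md{Gx}^2=\sum_{i,j}x_i^*x_j\otimes\xi_i^*\xi_j=\sum_i M^{1/2}e_{i,i}M^{1/2}\otimes\xi_i^*\xi_i=M\otimes 1,$$
using only that each $\xi_i$ is a unitary; no freeness is needed anywhere in this proposition. From this, $\mu(Gx)=\mu\p{M^{1/2}\otimes\Ind_{(0,2^{-n})}}=f$ and $\mu(S_r(x))=\mu\p{\mathrm{Diag}(M)^{1/2}\otimes\Ind_{(0,2^{-n})}}=g$, since $a$ and $b$ are already non-increasing.

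A secondary but real issue is your normalization. Scaling $x_i$ by $2^{-n/2}$ and tensoring with $\Ind_{(0,1)}$ yields singular value functions equal to $2^{-n/2}a_i$ (resp.\ $2^{-n/2}b_i$) on intervals of length $1$; this is not $f$ (resp.\ $g$), whose values are $a_i$ on intervals of length $2^{-n}$. The scalar changes the values of $\mu(Gx)$, not the measure of its level sets. The correct device is to leave the coefficients unscaled and tensor with $\Ind_{(0,2^{-n})}$ in the $L_\infty(0,1)$ leg, so that each eigenvalue occupies trace-measure $2^{-n}$, which is how the paper matches the dyadic step functions exactly.
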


\begin{proof}
Write $f = \Sum{i=1}{N}a_i\Ind_{[(i-1)2^{-n},i2^{-n})}$ and $g = \Sum{i=1}{N}b_i\Ind_{[(i-1)2^{-n},i2^{-n})}$. The Schur-Horn theorem applied to $(a_i^2)$ and $(b_i^2)$ produces a positive matrix $M\in\Mb_N(\Cb)$ such that the eigenvalues of $M$ are given by $(a_i^2)$ and the diagonal of $M$ is given by $(b_i^2)$. Consider $x = (e_{i,i}M^{1/2} \otimes \Ind_{(0,2^{-n})})_{0\leq i \leq N}$. Then, 
\begin{align*}
\md{Gx}^2 =& \Sum{i,j}{} M^{1/2}e_{i,i}e_{j,j}M^{1/2} \otimes \Ind_{(0,2^{-n})} \otimes \xi_i^*\xi_j \\
=& \Sum{i}{} M^{1/2}e_{i,i}M^{1/2}\otimes \Ind_{(0,2^{-n})} \otimes 1 = M \otimes \Ind_{(0,2^{-n})} \otimes 1.
\end{align*}
Similar computations give: $S_c(x)^2 = M \otimes \Ind_{(0,2^{-n})}$ and $S_r(x)^2 = Diag(M) \otimes \Ind_{(0,2^{-n})}$.
\end{proof}

\begin{lemma}\label{lem:Kh+- implies weak p-monotonicity}
Let $E$ be a symmetric function space and $\M = \B(\ell^2) \overline{\otimes} L_{\infty}(0,1)$. Then,
$$Kh_\cap(E,\M) \Rightarrow \p{\forall f,g\in F, f^2 \succ g^2 \Rightarrow \Norm{g}_E \les \Norm{f}_E},$$
and
$$Kh_\Sigma(E,\M) \Rightarrow \p{\forall f,g\in F, f^2 \triangleright g^2 \Rightarrow \Norm{g}_E \les \Norm{f}_E}.$$
\end{lemma}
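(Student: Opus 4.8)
The plan is to use Proposition \ref{prop:useSchurHorn}, which is precisely engineered to produce, for non-increasing $f,g\in F$ with $f^2\succ g^2$ and $\Norm{f}_2=\Norm{g}_2$, an element $x\in S(\M)$ with $\mu(Gx)=f$ and $\mu(S_r(x))=g$. So first I would reduce the general case $f^2\succ g^2$ (without the norm equality) to the equal-norm case: by Lemma \ref{lem:technique:fmaj g with norm f = norm g}(1) there exists $h\in F$ with $h\geq g$, $f^2\succ h^2$ and $\Norm{f}_2=\Norm{h}_2$. Since $\Norm{\cdot}_E$ is increasing, $\Norm{g}_E\leq\Norm{h}_E$, so it suffices to bound $\Norm{h}_E$; rename and assume from the start that $\Norm{f}_2=\Norm{g}_2$.

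Next, apply Proposition \ref{prop:useSchurHorn} to get $x\in S(\M)$ with $\mu(Gx)=f$ and $\mu(S_r(x))=g$. Now invoke $Kh_\cap(E,\M)$: since $Gx$ and $S_r(x)$ are self-adjoint (or at least have the stated singular value functions), and since $E$ is rearrangement-invariant, we have $\Norm{S_r(x)}_E = \Norm{\mu(S_r(x))}_E = \Norm{g}_E$ and $\Norm{Gx}_E = \Norm{f}_E$. The property $Kh_\cap(E,\M)$ gives
$$\max\p{\Norm{S_c(x)}_E,\Norm{S_r(x)}_E}\les \Norm{Gx}_E,$$
so in particular $\Norm{g}_E = \Norm{S_r(x)}_E \les \Norm{Gx}_E = \Norm{f}_E$, which is the first implication.

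For the second implication, I would argue in the dual direction. Given $f,g\in F$ non-increasing with $f^2\triangleright g^2$, by the Remark following Lemma \ref{lem:technique:fmaj g with norm f = norm g} (after first passing to the equal-norm situation via Lemma \ref{lem:technique:fmaj g with norm f = norm g}(2): take $h\geq g$ in $F$ with $f^2\triangleright h^2$ and $\Norm{f}_2=\Norm{h}_2$, and bound $\Norm{g}_E\leq\Norm{h}_E$), the relation $f^2\triangleright h^2$ together with $\Norm{f}_2=\Norm{h}_2$ is equivalent to $h^2\succ f^2$. So apply Proposition \ref{prop:useSchurHorn} with the roles reversed — to the pair $(h,f)$ satisfying $h^2\succ f^2$, $\Norm{h}_2=\Norm{f}_2$ — obtaining $x\in S(\M)$ with $\mu(Gx)=h$ and $\mu(S_r(x))=f$. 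Now use $Kh_\Sigma(E,\M)$: for any decomposition $y+z=x$,
$$\Norm{Gx}_E \les \Norm{S_c(z)}_E + \Norm{S_r(y)}_E.$$
Taking the trivial decomposition $y=x$, $z=0$ gives $\Norm{Gx}_E \les \Norm{S_r(x)}_E$, i.e. $\Norm{h}_E \les \Norm{f}_E$, hence $\Norm{g}_E \leq \Norm{h}_E \les \Norm{f}_E$.

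The only genuinely delicate point is making sure the reductions to the equal-$L_2$-norm case are legitimate when $\Norm{f}_2$ or $\Norm{g}_2$ could a priori be infinite; but for $f,g\in F$ these are finite dyadic step functions of bounded support, so all $L_2$-norms are finite and Lemma \ref{lem:technique:fmaj g with norm f = norm g} applies directly. Everything else is a direct substitution into the definitions of $Kh_\cap$ and $Kh_\Sigma$, so I do not expect any real obstacle beyond bookkeeping.
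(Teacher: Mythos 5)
Your proposal is correct and follows essentially the same route as the paper: reduce to the equal-$L_2$-norm case via Lemma \ref{lem:technique:fmaj g with norm f = norm g}, invoke Proposition \ref{prop:useSchurHorn} (with the roles of the two functions swapped in the $Kh_\Sigma$ case, justified by the remark that $f^2\triangleright h^2$ with equal norms is equivalent to $h^2\succ f^2$), and then read off the inequality from the relevant direction of $Kh_\cap$ or $Kh_\Sigma$ using the trivial decomposition. The only cosmetic omission (shared with the paper) is the harmless reduction to non-increasing $f,g$ via rearrangement invariance before applying Proposition \ref{prop:useSchurHorn}.
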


\begin{proof}
Suppose $Kh_\cap(E,\M)$. Let $f,g\in F$ such that $f^2 \succ g^2$. We can find $h\geq g\in F$ such that $f^2 \succ h^2$ and $\Norm{f^2} = \Norm{h^2}$ by Lemma \ref{lem:technique:fmaj g with norm f = norm g}. Then by Lemma \ref{prop:useSchurHorn} there exists $x\in S(\M)$ such that $\mu(Gx) = f$ and $\mu(S_r(x)) = h$. Hence, by $Kh_+(E,\M)$,
$$\Norm{f}_E = \Norm{Gx}_{E} \gtrsim \Norm{S_r(x)}_E = \Norm{h}_E.$$

Suppose $Kh_\Sigma(E,\M)$. Let $f,g\in F$ such that $f^2 \triangleright g^2$. We can find $h \geq g \in F$ such that $f^2 \triangleright h^2$ and $\Norm{h}_2 = \Norm{f}_2$ again by Lemma \ref{lem:technique:fmaj g with norm f = norm g}.
So by Lemma \ref{prop:useSchurHorn}, there exists $x\in S(\M)$ such that $\mu(Gx) = h$ and $\mu(S_r(x)) = f$. By $Kh_\Sigma(E,\M)$, $\Norm{Gx}_E \les \Norm{x}_{R_E + C_E}$. In particular, $\Norm{Gx}_E \les \Norm{S_r(x)}_E$. Hence, $\Norm{h}_E \les \Norm{f}_E$. Recall that $g\leq h$ so $\Norm{g}_E \leq \Norm{h}_E$ and the proof is complete.
\end{proof}

\begin{lemma}\label{lem:p-montokh}
Let $E$ be a symmetric function space and $x\in S(\M)$.
\begin{enumerate}
\item If $E$ is left-$2$-monotone then $\Norm{Gx}_E \gtrsim \Norm{x}_{R_E \cap C_E}$. 
\item If $E$ is right-$2$-monotone then $\Norm{Gx}_E \les \Norm{x}_{R_E + C_E}$.
\end{enumerate}
\end{lemma}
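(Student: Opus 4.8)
The plan is to derive both inequalities directly from the definitions of left- and right-$2$-monotonicity, by way of a single majorization observation about a conditional expectation. Write $\Phi := Id_\M \otimes \tau_\A \colon \M_\A \to \M$ for the trace preserving conditional expectation onto $\M \cong \M\otimes 1$, where $\tau_\A$ is the trace of $\A$; tensoring by $1\in\A$ does not change the singular value function since $\tau_\A(1)=1$. Because the $\xi_i$ are orthonormal in $L_2(\A)$, a direct computation gives $\Phi(\md{Gx}^2) = \sum_i x_i^* x_i = S_c(x)^2$ and $\Phi(\md{(Gx)^*}^2) = \Phi\big((Gx)(Gx)^*\big) = \sum_i x_i x_i^* = S_r(x)^2$. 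I will also use $\mu(Gx) = \mu((Gx)^*)$.

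For (1), I would invoke the classical fact that a trace preserving conditional expectation submajorizes positive elements: for $a\geq 0$ in $(L_1+L_\infty)(\M_\A)$ one has $\Int{0}{t}\mu_s(\Phi a)\,ds \leq \Int{0}{t}\mu_s(a)\,ds$ for every $t>0$. (This follows since $\Phi$ is contractive both on $L_1(\M_\A)$ and on $\M_\A$, together with the identity $K_t(a,L_1,L_\infty) = \Int{0}{t}\mu_s(a)\,ds$.) Applying this to $a = \md{Gx}^2$ gives $\Int{0}{t}\mu_s(S_c(x))^2\,ds \leq \Int{0}{t}\mu_s(Gx)^2\,ds$ for all $t$; that is, setting $f := \mu(Gx)$ and $g := \mu(S_c(x))$ we get $\md{f}^2 \succ \md{g}^2$, with $f\in E$ (since $Gx$ is bounded with support of finite trace, and in any case the claimed inequality is trivial when $\Norm{Gx}_E=\infty$). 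Left-$2$-monotonicity of $E$ then yields $\Norm{S_c(x)}_E \les \Norm{Gx}_E$, and the same argument run with $(Gx)^*$ in place of $Gx$ gives $\Norm{S_r(x)}_E \les \Norm{Gx}_E$. Taking the maximum is exactly $\Norm{x}_{R_E\cap C_E} \les \Norm{Gx}_E$.

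For (2), I would combine the same submajorization with the fact that $\Phi$ is trace \emph{preserving}: for $a\geq 0$ in $L_1(\M_\A)$ the functions $\mu(\Phi a)$ and $\mu(a)$ have the same finite total integral, so subtracting the head inequality from this common total yields the tail inequality $\Int{t}{\infty}\mu_s(\Phi a)\,ds \geq \Int{t}{\infty}\mu_s(a)\,ds$ for every $t>0$. Fix a decomposition $x = y + z$ with $y,z\in S(\M)$, so $Gx = Gy + Gz$ and $\md{Gz}^2,\ \md{(Gy)^*}^2 \in L_1(\M_\A)$. Applying the tail inequality to $a = \md{Gz}^2$ gives, with $f := \mu(S_c(z))$ and $g := \mu(Gz)$, the relation $\md{f}^2 \triangleright \md{g}^2$; hence right-$2$-monotonicity of $E$ yields $\Norm{Gz}_E \les \Norm{S_c(z)}_E$, and symmetrically $\Norm{Gy}_E \les \Norm{S_r(y)}_E$ (using $(Gy)^*$ and $\Phi(\md{(Gy)^*}^2)=S_r(y)^2$). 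By the quasi-triangle inequality in $E$, $\Norm{Gx}_E \les \Norm{Gy}_E + \Norm{Gz}_E \les \Norm{S_r(y)}_E + \Norm{S_c(z)}_E$; taking the infimum over all such decompositions gives $\Norm{Gx}_E \les \Norm{x}_{R_E + C_E}$.

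The computations of $\Phi(\md{Gx}^2)$ and $\Phi(\md{(Gx)^*}^2)$ are routine, and the two majorization facts for trace preserving conditional expectations are standard, so I do not anticipate a genuine obstacle. The only points demanding attention are: matching the direction of the majorization relations to the hypotheses ``$\md{f}^2\succ\md{g}^2$'' and ``$\md{f}^2\triangleright\md{g}^2$'' in the definitions of left- and right-$2$-monotonicity (this is why one uses $f=\mu(Gx)$ in (1) but $f=\mu(S_c(z))$ in (2)), and the finite-mass hypothesis needed for the tail inequality of (2), which is precisely what the assumption $y,z\in S(\M)$ supplies.
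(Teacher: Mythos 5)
Your argument is correct and is essentially the paper's proof: both rest on the conditional expectation $Id\otimes\tau_{\A}$ sending $\md{Gx}^2$ to $S_c(x)^2$ and $\md{(Gx)^*}^2$ to $S_r(x)^2$, together with the resulting left-majorization $\md{Gx}^2\succ S_c(x)^2,S_r(x)^2$ and right-majorization $S_c(x)^2, S_r(x)^2\triangleright\md{Gx}^2$. You merely make explicit two steps the paper leaves implicit, namely the $K$-functional derivation of the submajorization and the decomposition $x=y+z$ with the quasi-triangle inequality in part (2).
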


\begin{proof}
Consider the conditional expectation $\Eb : \M_{\A} \to \M$ defined by $\Eb := Id \otimes \tau_{\A}$. Note that for any $x\in S(\M)$,
$$\Eb(\md{Gx}^2) = S_c(x)^2\ \text{and}\ \Eb(\md{(Gx)^*}^2) = S_r(x)^2.$$
Hence, $\md{Gx}^2 \succ S_c(x)^2, S_r(x)^2$ and $\md{Gx}^2 \triangleleft S_c(x)^2, S_r(x)^2$ which is exactly what we needed.
\end{proof}

First, we state a characterisation true if the $(\xi_i)$ is a sequence of free unitaries. This is the simplest case since these variables are well behaved at $L_{\infty}$.

\begin{theorem}\label{thm:khfree}
Let $E$ be a quasi-Banach symmetric function space with the Fatou property. Assume that $(\xi_i)_{i\in\Nb}$ is a sequence of free Haar unitaries and that $\B(\ell^2) \overline{\otimes} L_{\infty}(0,1)$ embeds in $\M$. Then,
\begin{enumerate}
\item $Kh_\cap(E,\M) \Leftrightarrow E\in Int(L_2,L_\infty),$
\item $Kh_\Sigma(E,\M) \Leftrightarrow \exists p\in (0,2), E\in Int(L_p,L_2).$
\end{enumerate}
\end{theorem}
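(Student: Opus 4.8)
The plan is to prove Theorem~\ref{thm:khfree} by assembling the pieces established in the earlier sections, so that the argument reduces to known implications plus the commutative characterizations from Sections~\ref{3} and~\ref{4}. First I would treat the two equivalences separately but in parallel. For the reverse implications $E\in Int(L_2,L_\infty) \Rightarrow Kh_\cap(E,\M)$ and $(\exists p<2,\ E\in Int(L_p,L_2)) \Rightarrow Kh_\Sigma(E,\M)$: these are the ``hard'' direction analytically, but they are precisely the results that can be imported from the literature. In the $Kh_\cap$ case one cites \cite{DirRic13}, where the column/row Khintchine inequality with the max is established for symmetric spaces that interpolate $(L_2,L_\infty)$ (equivalently, are $2$-convex in the relevant sense); in the $Kh_\Sigma$ case one starts from \cite{PisRic17}, which gives $Kh_\Sigma$ when $E=L_p$ for $p<2$, and then applies the interpolation machinery of \cite{Cad18} to pass from the $L_p$-endpoint and the $L_2$-endpoint to an arbitrary interpolation space $E$ for $(L_p,L_2)$. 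I would state these as cited black boxes, since the paper explicitly says so in the introduction.

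Second, for the forward implications I would use the chain $Kh_\cap(E,\M) \Rightarrow (\text{weak left-}2\text{-monotonicity on }F) \Rightarrow \text{left-}2\text{-monotone} \Rightarrow E\in Int(L_2,L_\infty)$, and symmetrically for $Kh_\Sigma$. Concretely: Lemma~\ref{lem:Kh+- implies weak p-monotonicity} already gives that $Kh_\cap(E,\M)$ implies $f^2\succ g^2 \Rightarrow \Norm{g}_E\les\Norm{f}_E$ for $f,g\in F$; then since $E$ has the Fatou property, Lemma~\ref{lem:FtoE1} (with $p=2$, absorbing the implied constant) upgrades this to genuine left-$2$-monotonicity; finally Theorem~\ref{thm:LorentzShimogakiquasiBanach} identifies left-$2$-monotonicity with being an interpolation space for $(L_2,L_\infty)$. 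For $Kh_\Sigma$: Lemma~\ref{lem:Kh+- implies weak p-monotonicity} gives the $\triangleright$-version on $F$, the third bullet of Lemma~\ref{lem:concave} (its Fatou-property half) upgrades it to right-$2$-monotonicity, and then Theorem~\ref{thm:functionspace} — in the form of the chain $(ii)\Leftrightarrow(iii)$ of Theorem~\ref{thm:intro:conjfonc}, i.e. right-$2$-monotone iff there exists $p<2$ with $E\in Int(L_p,L_2)$ — closes the loop. Here one must check that the constant $q=2$ does fall in the allowed range; since $E$ is a quasi-Banach symmetric space this is fine, and the case distinction $\alpha_E=0$ versus $\alpha_E\neq 0$ is handled uniformly by the monotonicity formulation rather than by Boyd indices.

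Third, I must account for the embedding hypothesis: the theorem only assumes that $\B(\ell^2)\otimest L_\infty(0,1)$ embeds (unitally, trace-preservingly) in $\M$, not that $\M$ equals it. For the forward direction this is exactly what is needed, since Proposition~\ref{prop:useSchurHorn} produces the witnesses $x\in S(\M_0)$ with $M_0=\B(\ell^2)\otimest L_\infty(0,1)$, and a trace-preserving unital embedding $M_0\hookrightarrow\M$ transports these to $S(\M)$ without changing the distributions of $Gx$, $S_c(x)$, $S_r(x)$; so $Kh_\cap(E,\M)$ (resp.\ $Kh_\Sigma(E,\M)$), which is a statement about all of $S(\M)$, restricts to give the needed inequalities on $M_0$. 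For the reverse direction one needs that the cited Khintchine inequalities in \cite{DirRic13}, \cite{PisRic17}, \cite{Cad18} hold for \emph{arbitrary} $\M$ containing free Haar unitaries — which they do, as those results are proved in that generality — so the embedding hypothesis is not even used there. I would spell out the transport-of-structure point briefly since it is the only place the precise form of the hypothesis matters.

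The main obstacle is really just bookkeeping of constants and ranges: verifying that $q=2$ is admissible in every lemma invoked (Lemma~\ref{lem:FtoE1}, Lemma~\ref{lem:concave}, Theorem~\ref{thm:functionspace}), that the $\approx$ versus $\les$ discrepancies between $Kh$-properties and exact monotonicity are absorbed by the constant $C$ in the definition of left/right-$q$-monotone (which they are, since those definitions already allow an arbitrary constant), and that ``$E\in Int(L_p,L_2)$ for some $p<2$'' is literally the statement delivered by the equivalence $(ii)\Leftrightarrow(iii)$ of Theorem~\ref{thm:intro:conjfonc}. No genuinely new analysis is required beyond what has been set up; the theorem is a synthesis. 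I would therefore write the proof as: ``$Kh_\cap(E,\M)\Rightarrow E\in Int(L_2,L_\infty)$'' via Lemmas~\ref{lem:Kh+- implies weak p-monotonicity}, \ref{lem:FtoE1} and Theorem~\ref{thm:LorentzShimogakiquasiBanach}; ``$E\in Int(L_2,L_\infty)\Rightarrow Kh_\cap(E,\M)$'' by \cite{DirRic13}; ``$Kh_\Sigma(E,\M)\Rightarrow \exists p<2,\ E\in Int(L_p,L_2)$'' via Lemmas~\ref{lem:Kh+- implies weak p-monotonicity}, \ref{lem:concave} and Theorem~\ref{thm:functionspace}; and ``$(\exists p<2,\ E\in Int(L_p,L_2))\Rightarrow Kh_\Sigma(E,\M)$'' by \cite{PisRic17} combined with \cite{Cad18}.
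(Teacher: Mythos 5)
Your proposal is correct and follows essentially the same route as the paper: forward implications via Lemma~\ref{lem:Kh+- implies weak p-monotonicity} together with the Fatou-property upgrades (Lemma~\ref{lem:FtoE1}, Lemma~\ref{lem:concave}) and the commutative characterizations of Sections~\ref{3}--\ref{4}, and reverse implications by combining Lemma~\ref{lem:p-montokh} with the cited results of \cite{DirRic13}, \cite{PisRic17} and \cite{Cad18}. The only slip is cosmetic: for ``left-$2$-monotone $\Rightarrow E\in Int(L_2,L_\infty)$'' the paper invokes Proposition~\ref{prop:KmonToInter} rather than Theorem~\ref{thm:LorentzShimogakiquasiBanach} (which is stated only for $p<1$), though the conclusion is the same since $(L_2,L_\infty)$ is a Calder\'on couple by Theorem~\ref{thm:Kmon for Lp literature}.
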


\begin{proof}(1),"$\Rightarrow$". Assume that $Kh_\cap(E,\M)$ holds. By Lemma \ref{lem:Kh+- implies weak p-monotonicity} and \ref{lem:FtoE1}, $E$ is left-$2$-monotone. Hence by \ref{prop:KmonToInter}, $E$ is an interpolation space for the couple $(L_2,L_\infty)$.

(1),"$\Leftarrow$". By Theorem \ref{thm:Kmon for Lp literature}, $E$ is left-$2$-monotone so by Lemma \ref{lem:p-montokh} one inequality is verified. The other inequality is true for any symmetric space by Lemma 2.1. in \cite{DirRic13}.

(2),"$\Rightarrow$". By Lemma \ref{lem:Kh+- implies weak p-monotonicity} and Lemma \ref{lem:concave}, $E$ is right-$2$-monotone. Furthermore, by corollary \ref{cor:allpmon}, $E$ is also left-$p$-monotone for some $p$ and we can assume that $p<2$. Hence by Theorem \ref{thm:functionspace}, $E\in Int(L_p,L_2)$.

(2),"$\Leftarrow$". By Theorem \ref{thm:functionspace}, $E$ is right-$2$-monotone so by Lemma \ref{lem:p-montokh}, one inequality is verified. The other holds without any assumption on $E$. Indeed, by Remark \ref{rem:beta} and Proposition \ref{prop:boydintinfty}, there exists $p>0$ such that $E \in Int(L_p,L_\infty)$ ($p$ can be assumed to be less than $2$). Note also that for the chosen variables, the Khintchine inequalities hold in $L_p$ by \cite{PisRic17} (or \cite{Cad18}). This allows to conclude with \cite{Cad18}, Theorem 2.6.
 
\end{proof}

Now we turn our attention to Rademacher variables. We start with an lemma due to Astashkin in \cite{Ast09} (Theorem 7.2), we simply verify it in the semifinite setting.

\begin{lemma}\label{lem:kh_alpha}
Assume that $E$ has the Fatou property, that $(\xi_i)_{i\in\Nb}$ is a sequence of independent Rademacher variables and that $Kh_{\cap}(E,L_{\infty}(0,\alpha))$ holds. Then, $\alpha_E \neq 0$.
\end{lemma}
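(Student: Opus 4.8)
The plan is to prove the contrapositive: assuming $\alpha_E = 0$, I will show that $Kh_\cap(E,L_\infty(0,\alpha))$ fails. The key point is that when $\alpha_E = 0$, the space $E$ behaves, as far as dilations go, like $L_\infty$, and Rademacher variables cannot satisfy the two-sided $\cap$-estimate in $L_\infty$. More precisely, the failure of $Kh_\cap(L_\infty, \cdot)$ for Rademacher variables is classical (for $n$ independent Rademachers $\xi_1,\dots,\xi_n$, one has $\|\sum \xi_i\|_\infty = n$ while $\max(\|S_c\|_\infty,\|S_r\|_\infty) = \sqrt n$), and the idea is to transport this obstruction into $E$ using Lemma \ref{lem:alpha=0}.

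The main steps I would carry out are as follows. First, fix $n$ and take scalar coefficients $x_i = \Ind_{(0,1)} \in L_\infty(0,\alpha)$ (or, more precisely, work inside the copy of $M_n(\Cb) \otimes L_\infty(0,1)$ sitting in $L_\infty(0,\alpha)$ so that $S(\M)$ makes sense; one can also simply use scalar coefficients times a fixed projection). For these, $S_c(x) = S_r(x) = \sqrt n\,\Ind_{(0,1)}$, so $\|x\|_{R_E \cap C_E} = \sqrt n\,\|\Ind_{(0,1)}\|_E$. On the other hand $Gx = \sum_{i=1}^n \xi_i$ has distribution a binomial-type law; the element $\Ind_{(0,1)} \otimes Gx$ has the same distribution in $L_\infty(0,1) \otimes \A$, and using Lemma \ref{lem:alpha=0} with $\varphi$ an appropriate step function one sees that $\|Gx\|_E \geq c\,\mu_s(Gx)\,\|\Ind_{(0,s)}\|_E$ for suitable $s$; in fact the cleanest route is to note $\mu(Gx)$ is supported on $(0,1)$ with $\|Gx\|_\infty = n$, and by Lemma \ref{lem:alpha=0} (applied to the rescaled picture, or directly since $\mu(Gx) \le n \Ind_{(0,1)}$ and $\mu(Gx)$ is ``large on a definite fraction'') we get $\|Gx\|_E \gtrsim n^{1/2+\delta}\|\Ind_{(0,1)}\|_E$ for some fixed $\delta > 0$ independent of $n$. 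Comparing with $\sqrt n \|\Ind_{(0,1)}\|_E$ forces the constant in $Kh_\cap$ to blow up as $n \to \infty$, a contradiction.

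The technically delicate point is the quantitative lower bound $\|Gx\|_E \gtrsim n^{1/2+\delta} \|\Ind_{(0,1)}\|_E$: a priori $\alpha_E = 0$ only says dilations by factors $b \le 1$ have norm exactly $1$, which by itself would only give $\|Gx\|_E \ge \|\,|\sum\xi_i|\,\|_E$ with no gain. The resolution is that $\alpha_E = 0$ is genuinely strong: Lemma \ref{lem:alpha=0} shows that for $\varphi \in L_\infty(0,1)$, the amplification operator $T_\varphi$ has norm exactly $\|\varphi\|_\infty$, which says $\|\varphi(s)g(t)\|_{E((0,1)\times(0,\alpha))} = \|\varphi\|_\infty \|g\|_E$ — an \emph{isometric} statement. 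Using this with $g$ a rearrangement of $|\sum\xi_i|$ and the fact that $|\sum\xi_i|$ already dominates $\sqrt n\,\Ind_A$ on a set $A$ of measure bounded below (say measure $\ge 1/4$ by an anti-concentration estimate for Rademacher sums, e.g. $\Pr(|\sum\xi_i| \ge \sqrt n/2)$ is bounded below), one gets, after the amplification trick converts this ``fixed fraction'' into an arbitrarily small fraction isometrically, the needed super-$\sqrt n$ growth — concretely, iterating or choosing $\varphi$ cleverly lets one extract $\|\Ind_{(0,n^{-1/2}\cdot\text{const})} \cdot n\|_E$-type quantities and compare. I expect assembling exactly this last comparison — choosing the step function $\varphi$ and the anti-concentration set so that the two sides of the putative $Kh_\cap$ inequality are forced apart by an unbounded factor — to be the main obstacle, though each ingredient (anti-concentration for Rademacher sums, Lemma \ref{lem:alpha=0}, the distribution identifications) is individually routine.
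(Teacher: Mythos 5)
Your high-level strategy is the paper's: test $Kh_\cap$ on constant coefficient sequences, so that $\Norm{x}_{R_E\cap C_E}$ is computable and $Gx$ is an elementary tensor $f\otimes\frac{1}{\sqrt n}\sum_i\xi_i$, then play this against Lemma \ref{lem:alpha=0}. But there is a genuine gap in the execution: you freeze the coefficient function as $f=\Ind_{(0,1)}$ and try to prove $\Norm{Gx}_E\gtrsim n^{1/2+\delta}\Norm{\Ind_{(0,1)}}_E$ for a fixed $\delta>0$. This bound is false in general for spaces with $\alpha_E=0$. For instance, in the Lorentz space $\Lambda_\psi$ with $\psi(t)=(1+\log(1/t))^{-1}$ one checks that $\Norm{D_t}=1$ for all $t\leq 1$ (so $\alpha_E=0$), yet $\Norm{\sum_{i=1}^n\xi_i}_{\Lambda_\psi}\approx\sqrt n\,\Norm{\Ind_{(0,1)}}_{\Lambda_\psi}$: the large values $\mu_t\approx\sqrt{n\log(1/t)}$ of the Rademacher sum live on sets so small that $\psi$ kills them. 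So no single fixed test function can produce the contradiction, and no anti-concentration estimate will rescue this — anti-concentration only sees the $\sqrt n$ bulk, as you yourself note. Relatedly, you misquote Lemma \ref{lem:alpha=0}: it asserts that the \emph{operator norm} of $T_\varphi$ equals $\Norm{\varphi}_\infty$, not that $\Norm{T_\varphi g}_E=\Norm{\varphi}_\infty\Norm{g}_E$ for every $g$; the near-maximizer $g$ depends on $\varphi$ (it is a function whose norm barely drops under the dilation $D_b$ with $b=\nu(\md{\varphi}>a)$), and your ``amplification trick'' is precisely the unproved step.

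The repair is to let the coefficient vary, which is what the paper does. Apply $Kh_\cap$ to $x=(f/\sqrt n)_{1\leq i\leq n}$ for \emph{every} bounded finitely supported $f\in E$: since $S_c(x)=S_r(x)=\md{f}$ and $Gx=T_{\varphi_n}f$ with $\varphi_n=\frac{1}{\sqrt n}\sum_{i=1}^n\xi_i$, you get $\Norm{T_{\varphi_n}f}_E\les\Norm{f}_E$ with a constant independent of $n$ and $f$; the Fatou property extends this to all $f\in E$, so the operators $T_{\varphi_n}$ are uniformly bounded on $E$. If $\alpha_E$ were $0$, Lemma \ref{lem:alpha=0} would give $\normop{T_{\varphi_n}}=\Norm{\varphi_n}_\infty=\sqrt n\to\infty$ (or invoke the central limit theorem to see $(\varphi_n)$ is not uniformly bounded in $L_\infty$), a contradiction. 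Note that this argument never needs a quantitative lower bound on $\Norm{Gx}_E$ for a specific $x$; it only needs that the operator norms blow up, which is exactly the content of Lemma \ref{lem:alpha=0}.
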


\begin{proof}
Let $f$ be a bounded and finitely supported function in $E$. Consider the constant sequence $x = \p{\frac{f}{\sqrt{n}}}_{1\leq i\leq n}$. Using the notation introduced in Lemma \ref{lem:alpha=0}, and applying $Kh_\cap(E,L_{\infty}(0,\alpha))$ to $x$, we obtain:
$$\Norm{T_{\varphi_n}f}_E \les \Norm{f}_E,\ \text{where}\ \varphi_n = \frac{1}{\sqrt{n}}\sum_{i=1}^n \xi_i.$$
By the Fatou property, this inequality extends to all $f$ in $E$ and the implicit constant does not depend
on $f$ nor on $n$ since it is given by $Kh_\cap(E,L_{\infty}(0,\alpha))$.
In other words, the operators $T_{\varphi_n}$ are uniformly bounded on $E$. 
Furthermore, by the central limit theorem, the sequence $(\varphi_n)_{n\in\Nb}$ converges in distribution to a gaussian variable and is therefore not uniformly bounded in $L_\infty$. 
By Lemma \ref{lem:alpha=0}, this means that $\alpha_E \neq 0$.
\end{proof}

\begin{theorem}\label{thm:khrad}
Let $E$ be a quasi-Banach symmetric function space with the Fatou property. Assume that $(\xi_i)_{i\in\Nb}$ is a sequence of independent Rademacher variables and that $\M$ contains $\B(\ell^2) \overline{\otimes} L_{\infty}(0,1)$. Then,
\begin{enumerate}
\item $Kh_\cap(E,\M) \Leftrightarrow \exists q\in (2,\infty), E\in Int(L_2,L_q),$
\item $Kh_\Sigma(E,\M) \Leftrightarrow \exists p\in (0,2), E\in Int(L_p,L_2).$
\end{enumerate}
\end{theorem}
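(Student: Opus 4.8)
The plan is to prove Theorem~\ref{thm:khrad} by reducing the Rademacher case to the free case (Theorem~\ref{thm:khfree}) using Astashkin's idea, which allows one to ``upgrade'' the upper $L_\infty$ endpoint that is missing for Rademacher variables to a finite exponent $q$. The two implications will be treated separately, and the main work is in the $Kh_\cap$ statement since the $Kh_\Sigma$ statement is essentially identical to part (2) of Theorem~\ref{thm:khfree} (the lower-endpoint behaviour of Rademacher and free variables is the same because in both cases the relevant computation happens below $L_2$, where no $L_\infty$ pathology occurs).

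\medskip
\noindent\emph{Proof of (2).} For the forward direction, assume $Kh_\Sigma(E,\M)$. By Lemma~\ref{lem:Kh+- implies weak p-monotonicity} and Lemma~\ref{lem:concave}, $E$ is right-$2$-monotone; by Corollary~\ref{cor:allpmon} it is left-$p$-monotone for some $p$, which we may take $<2$; hence by Theorem~\ref{thm:functionspace}, $E\in Int(L_p,L_2)$. For the converse, assume $E\in Int(L_p,L_2)$ with $p<2$. By Theorem~\ref{thm:functionspace}, $E$ is right-$2$-monotone, so by Lemma~\ref{lem:p-montokh}(2) we get $\Norm{Gx}_E \les \Norm{x}_{R_E+C_E}$. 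The reverse inequality $\Norm{x}_{R_E+C_E}\les\Norm{Gx}_E$ holds with no hypothesis on $E$: by Remark~\ref{rem:beta} and Proposition~\ref{prop:boydintinfty} there is $p'<2$ with $E\in Int(L_{p'},L_\infty)$; the $L_{p'}$-Khintchine inequality for Rademacher variables holds classically (and in the noncommutative setting, for $p'\le 1$, by \cite{LusPis91} and subsequent work), so one interpolates as in \cite{Cad18}, Theorem~2.6.

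\medskip
\noindent\emph{Proof of (1), ``$\Leftarrow$''.} Suppose $E\in Int(L_2,L_q)$ for some $q\in(2,\infty)$. By Theorem~\ref{thm:functionspace} (with $\infty>q$) and Theorem~\ref{thm:LorentzShimogakiquasiBanach}, $E$ is left-$2$-monotone, so Lemma~\ref{lem:p-montokh}(1) gives $\Norm{Gx}_E\gtrsim\Norm{x}_{R_E\cap C_E}$. For the reverse inequality one uses that the noncommutative Khintchine inequality $\Norm{Gx}_q\les\Norm{x}_{R_q\cap C_q}$ holds for Rademacher variables and $q<\infty$ (this is the classical result of \cite{LusPis91}), that the trivial bound $\Norm{Gx}_2 = \Norm{x}_{R_2\cap C_2}$ holds at the $L_2$ endpoint, and that $E$ is an interpolation space for $(L_2,L_q)$; combining these (again via \cite{Cad18}, Theorem~2.6, applied to the map $x\mapsto Gx$) yields $\Norm{Gx}_E\les\Norm{x}_{R_E\cap C_E}$.

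\medskip
\noindent\emph{Proof of (1), ``$\Rightarrow$''.} This is the substantive direction. Assume $Kh_\cap(E,\M)$. First, since $\B(\ell^2)\otimest L_\infty(0,1)\hookrightarrow\M$, the property descends: $Kh_\cap(E,L_\infty(0,1))$ holds, hence so does $Kh_\cap(E,L_\infty(0,\alpha))$ after identifying the relevant subalgebras. By Lemma~\ref{lem:kh_alpha}, $\alpha_E\neq 0$, so by Corollary~\ref{cor:alphaqmon} there exists $q\in(1/\alpha_E,\infty)$, which we take $>2$, such that $E$ is right-$q$-monotone. Separately, from $Kh_\cap(E,\M)$ and Lemmas~\ref{lem:Kh+- implies weak p-monotonicity} and \ref{lem:FtoE1}, $E$ is left-$2$-monotone. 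Now $E$ is left-$2$-monotone and right-$q$-monotone with $2<q<\infty$, so by Theorem~\ref{thm:functionspace}, $E\in Int(L_2,L_q)$. The main obstacle is exactly the invocation of Lemma~\ref{lem:kh_alpha}: that the constant-sequence test vectors $x=(f/\sqrt n)_{i\le n}$, pushed through $Kh_\cap$, force uniform boundedness of the multiplier operators $T_{\varphi_n}$ with $\varphi_n=n^{-1/2}\sum_{i=1}^n\xi_i$, while the central limit theorem makes $(\varphi_n)$ fail to be bounded in $L_\infty$; by Lemma~\ref{lem:alpha=0} this is incompatible with $\alpha_E=0$. Once $\alpha_E\neq 0$ is in hand the rest is a direct assembly of the interpolation machinery already developed.
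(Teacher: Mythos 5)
Your proposal is correct and follows essentially the same route as the paper: Lemma \ref{lem:kh_alpha} plus Corollary \ref{cor:alphaqmon} to manufacture the missing right-$q$-monotonicity for some finite $q>2$, combined with left-$2$-monotonicity from Lemmas \ref{lem:Kh+- implies weak p-monotonicity} and \ref{lem:FtoE1} and then Theorem \ref{thm:functionspace}, while the converse directions use Lemma \ref{lem:p-montokh} together with interpolation of the classical noncommutative Khintchine inequalities between $L_2$ and $L_q$ (resp.\ $L_p$ and $L_2$). The treatment of part (2) likewise matches the paper's reduction to the argument of Theorem \ref{thm:khfree}(2).
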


\begin{proof}(1),"$\Rightarrow$". Assume that $Kh_\cap(E,\M)$ holds. Again, by Lemma \ref{lem:Kh+- implies weak p-monotonicity} and \ref{lem:FtoE1}, $E$ is left-$2$-monotone. By Lemma \ref{lem:kh_alpha}, we also know that $\alpha_E \neq 0$. So by corollary \ref{cor:alphaqmon}, there exists $q<\infty$ such that $E$ is right-$q$-monotone. Hence, by Theorem \ref{thm:functionspace}, $E$ is an interpolation space for the couple $(L_2,L_q)$

(1),"$\Leftarrow$". By Theorem \ref{thm:Kmon for Lp literature}, $E$ is left-$2$-monotone so by Lemma \ref{lem:p-montokh} one inequality is verified. The other is true for in any $L_p$-space, $2\leq p<\infty$ by \cite{Lus86}. Hence, it holds in $E$ by interpolation.

(2) can be proven in a similar way than Theorem \ref{thm:khfree}, (2).
\end{proof}

\begin{remark}
By considering unbounded sequences of operators when defining the properties $Kh_\cap$ and $Kh_\Sigma$, theorems \ref{thm:khfree} and \ref{thm:khrad} could be formulated and proven without assuming that $E$ has the Fatou property. If $E$ is a sequence space (not necessarily Fatou) and $\M = B(\ell^2)$, using an infinite dimensional version of the Schur-Horn theorem (see \cite{KW10}), one can argue similarly as above. For a general function space $E$, more work seems to be required and would be outside of the scope of this paper.
\end{remark}

\begin{remark}
By replacing $\B(\ell^2) \overline{\otimes} L_\infty(0,1)$ by the hyperfinite factor $\R$, one can obtain, with the same proofs, similar statements for symmetric spaces defined over $(0,1)$ and by renormalising the trace of $\R$, on $(0,\alpha)$, $\alpha <\infty$.
\end{remark}

\subsection*{Acknowledgements}

I would like to thank my advisor \'Eric Ricard for fruitful discussions and his reading of previous versions of the text. I am most grateful to M. Cwikel, F. Sukochev and D. Zanin for valuable exchanges leading to a substantial improvement of the paper.
\bibliographystyle{plain}

\

\emph{Laboratoire de mathématiques Nicolas Oresme, Université de Caen Normandie, 14032 Caen Cedex, France.}

\

 \emph{Mathematical Institute of the Polish Academy of Sciences,
  ul. \'Sniadeckich 8
00-656 Warszawa, Poland}

\

\emph{E-mail address}: lcadilhac@impan.pl
\end{document}